\newcommand\headercell[1]{%
   \smash[b]{\begin{tabular}[t]{@{}c@{}} #1 \end{tabular}}}
\def\darrow{\mathrel{\ThisStyle{\ooalign{$\SavedStyle\rightarrow$\cr%
  \hfil\textcolor{white}{\rule{2\LMpt}{1\LMex}}\kern2\LMpt\hfil}}}}
\newtheorem{theorem}{Theorem}
\newtheorem{proposition}[theorem]{Proposition}
\newtheorem{lemma}[theorem]{Lemma}
\newtheorem{corollary}[theorem]{Corollary}
\newtheorem{conjecture}[theorem]{Conjecture}
\newtheorem*{theorem*}{Theorem}
\newtheorem*{question*}{Question}
\theoremstyle{definition}
\theoremstyle{remark}
\newtheorem{remark}[theorem]{Remark}
\newcommand{\id}{\operatorname{Id}}
\newcommand{\defeq}{\vcentcolon=}
\newcommand\nc{\newcommand}
\nc{\on}{\operatorname}
\nc\renc{\renewcommand}
\nc{\BR}{\mathbb R}
\nc{\BC}{\mathbb C}
\nc{\BQ}{\mathbb Q}
\nc{\BF}{\mathbb F}
\nc{\BZ}{\mathbb Z}
\nc{\BN}{\mathbb N}
\nc{\BS}{\mathbb S}
\nc{\BA}{\mathbb A}
\nc{\BP}{\mathbb P}
\nc{\Hom}{\on{Hom}}
\nc{\wt}{\widetilde}
\nc{\vspan}{\on{span}}
\nc{\ord}{\on{ord}}
\nc{\im}{\on{im}}
\nc{\Mat}{\on{Mat}}
\nc{\can}{\on{can}}
\nc{\coker}{\on{coker}}
\nc{\ev}{\on{ev}}
\nc{\Tr}{\on{Tr}}
\nc{\End}{\on{End}}
\nc{\Aut}{\on{Aut}}
\nc{\swap}{\on{swap}}
\nc{\Set}{\on{Set}}
\nc{\bC}{{\mathbf C}}
\nc{\bc}{{\mathbf c}}
\nc{\bD}{{\mathbf D}}
\nc{\bd}{{\mathbf d}}
\nc{\bE}{{\mathbf E}}
\nc{\be}{{\mathbf e}}
\nc{\bF}{{\mathbf F}}
\nc{\bff}{{\mathbf f}}
\nc{\fa}{\mathfrak a}
\renc{\mod}{\on{-mod}} 
\nc{\adj}{\on{adj}}
\nc{\tensor}[3]{#1 \underset{#2}\otimes #3}
\nc{\Nat}{\on{Nat}}
\nc{\op}{\on{op}}
\nc{\Funct}{\on{Funct}}
\nc{\Ob}{\on{Ob}}
\nc{\fR}{\mathfrak{R}}
\nc{\Vect}{\on{Vect}}
\nc{\ns}{\on{non-spec}}
\nc{\GL}{\on{GL}}
\nc{\ol}{\overline}
\nc{\ul}{\underline}
\nc{\univ}{\on{univ}}
\nc{\Maps}{\on{Maps}}
\nc{\bdd}{\on{bdd}}
\nc{\cont}{\on{cont}}
\nc{\Sym}{\on{Sym}}
\nc{\Ind}{\on{Ind}}
\nc{\Res}{\on{Res}}
\nc{\Ann}{\on{Ann}}
\nc{\cI}{\mathcal{I}}
\nc{\pt}{\on{pt}}
\nc{\Bl}{\on{\Bl}}
\nc{\Spec}{\on{Spec}}
\nc{\Cl}{\on{Cl}}
\nc{\hannahbox}{\boxed}
\nc{\cD}{\mathcal{D}}
\renc{\div}{\on{div}}
\nc{\mc}{\mathcal}
\nc{\pp}{\mathfrak{p}}
\nc{\scr}{\mathscr}
\newcommand{\extp}{\@ifnextchar^\@extp{\@extp^{\,}}}
\def\@extp^#1{\mathop{\bigwedge\nolimits^{\!#1}}}
\tikzset{%
    add/.style args={#1 and #2}{
        to path={%
 ($(\tikztostart)!-#1!(\tikztotarget)$)--($(\tikztotarget)!-#2!(\tikztostart)$)%
  \tikztonodes},add/.default={.2 and .2}}
}  
\tikzset{
  on each segment/.style={
    decorate,
    decoration={
      show path construction,
      moveto code={},
      lineto code={
        \path [#1]
        (\tikzinputsegmentfirst) -- (\tikzinputsegmentlast);
      },
      curveto code={
        \path [#1] (\tikzinputsegmentfirst)
        .. controls
        (\tikzinputsegmentsupporta) and (\tikzinputsegmentsupportb)
        ..
        (\tikzinputsegmentlast);
      },
      closepath code={
        \path [#1]
        (\tikzinputsegmentfirst) -- (\tikzinputsegmentlast);
      },
    },
  },
  rightend arrow/.style={postaction={decorate,decoration={
        markings,
        mark=at position .81 with {\arrow[#1]{triangle 45}}
      }}},
      leftend arrow/.style={postaction={decorate,decoration={
        markings,
        mark=at position .18 with {\arrowreversed[#1]{triangle 45}}
      }}},
}
  \noindent\textsc{Department of Mathematics, Harvard University, \mbox{Cambridge, MA 02138}} \par
\date{}
\begin{document}

\title{\vspace*{-0.65in}A new parametrization for ideal classes in rings \\ defined by binary forms, and applications\vspace*{-0.00in}}

\author{Ashvin A.~Swaminathan}


\maketitle

\vspace*{-0.15in}
\begin{abstract}
    \noindent 
    We give a parametrization of square roots of the ideal class of the inverse different of rings defined by binary forms in terms of the orbits of a coregular representation. This parametrization, which can be construed as a new integral model of a ``higher composition law'' discovered by Bhargava and generalized by Wood, was the missing ingredient needed to solve a range of previously intractable open problems concerning distributions of class groups, Selmer groups, and related objects. For instance, in this paper, we apply the parametrization to bound the average size of the $2$-class group in families of number fields defined by binary $n$-ic forms, where $n \geq 3$ is an arbitrary integer, odd or even; in the paper~\cite{super}, we applied it to prove that most integral odd-degree binary forms fail to primitively represent a square; and in the paper~\cite{BSSpreprint}, joint with Bhargava and Shankar, we applied it to bound the second moment of the size of the $2$-Selmer group of elliptic curves.
\end{abstract}

\vspace*{-0.00in}
\section{Introduction} \label{sec-intro}

\subsection{The parametrization} 
A famous theorem of Hecke~\cite[Theorem 176]{MR638719} states that the ideal class of the different of the ring of integers of a number field is a perfect square. Motivated by Hecke's result, \mbox{Ellenberg posed the following question:}
\begin{question*}[\protect{Ellenberg,~\cite{52815}}]
Could there be a ``parametrization'' in anything like Bhargava's sense for cubic rings together with a square root of $[$the ideal class of the$]$ inverse different?
\end{question*}

Our first main contribution is to give a positive answer to Ellenberg's question, not just for cubic rings, but for rings of \emph{any} degree $n \geq 3$ defined by integral binary $n$-ic forms. 
Specifically, let $F$ be a separable primitive integral binary form of degree $n \geq 3$ with leading coefficient $F(1,0) = f_0 \neq 0$, and let $R_F$ denote the ring of global sections of the subscheme of $\mathbb{P}^1_{\BZ}$ cut out by $F$. Letting $G_n \defeq \on{SL}_n$ if $n$ is odd and $G_n \defeq \on{SL}_n^{\pm} = \{g \in \on{GL}_n : \det g = \pm 1\}$ if $n$ is even, 
we prove the following roughly stated parametrization result, to be made precise in \S\ref{sec-bigconstruct}:
\begin{theorem} \label{thm-thisiswhyimhere}
Square roots of the class of the inverse different of $R_F$ naturally give rise to $G_n(\BZ)$-orbits of pairs $(A,B) \in \BZ^2 \otimes_\BZ \Sym_2 \BZ^n$ of symmetric $n \times n$ integer matrices such that
$$\det(x  A + y  B) = \pm f_0^{-1} F(x, f_0  y) \in \BZ[x,y],$$
where $g \in G_n(\BZ)$ acts on $(A,B)$ via $g \cdot (A,B) = (g  A g^T, g  B  g^T)$. The locus of pairs $(A,B)$ that arise in this way is cut out of the hypersurface $\det A = \pm 1$ by congruence conditions modulo $f_0^{n-1}$.
\end{theorem}
This new parametrization has made it possible to pursue a range of statistical applications concerning class groups of number fields and rational/integral points on varieties. As examples:
\begin{itemize}[leftmargin=15pt,itemsep=0pt]
\item In this paper, we apply Theorem~\ref{thm-thisiswhyimhere} to bound (and precisely determine, conditional on a tail estimate) the average size of the $2$-class group in families of number fields defined by binary $n$-ic forms, where $n \geq 3$ is arbitrary (see Theorems~\ref{cor-oddodd} to~\ref{cor-nowhere}). Similar bounds and conditional equalities have previously been proven when $n$ is \emph{odd} (see, e.g., the work of Ho--Shankar--Varma~\cite{MR3782066}) and when the binary forms under consideration are \emph{monic} (see, e.g., the work of Siad \cite{Siadthesis1,Siadthesis2}). The new parametrization in Theorem~\ref{thm-thisiswhyimhere} allows us to handle \emph{all} cases simultaneously, including the non-monic even-degree case, which was not amenable to existing methods.
\item In the paper~\cite{super}, we used Theorem~\ref{thm-thisiswhyimhere} to prove that most integral odd-degree binary forms fail to primitively represent a square because of a Brauer--Manin obstruction.
\item In the paper~\cite{BSSpreprint}, joint with Bhargava and Shankar, we applied Theorem~\ref{thm-thisiswhyimhere} to bound the second moment of the size of the $2$-Selmer group of elliptic curves.
\end{itemize}

\subsection{Relation to earlier work} \label{sec-rellers}

Theorem~\ref{thm-thisiswhyimhere} may be regarded as giving a new integral model of a ``higher composition law'' discovered by Bhargava and generalized by Wood. In his thesis~\cite[Theorem~4]{MR2081442}, Bhargava demonstrated that when $\deg F = 3$, the elements of $\on{Cl}(R_F)[2]$ --- i.e., square roots of the trivial class --- are parametrized by $G_3(\BZ)$-orbits of pairs $(A,B) \in \BZ^2 \otimes_{\BZ} \on{Sym}_2 \BZ^3$ such that $\det(x  A + y  B) = \pm F(x,y)$. More generally, let $I_F^k$ be the space of global sections of the pullback to $\on{Spec} R_F$ of the line bundle of degree $k$ on $\mathbb{P}_{\BZ}^1$. In her thesis~\cite[Theorem~1.3]{MR3187931}, Wood proved that when $\deg F = n \geq 3$, square roots of the class of $I_F^{n-3}$ are parametrized by $G_n(\BZ)$-orbits of pairs $(A,B) \in \BZ^2 \otimes_{\BZ} \on{Sym}_2 \BZ^n$ such that $\det(x  A + y  B) = \pm F(x,y)$.

These results of Bhargava and Wood precipitated two decades of progress in arithmetic statistics. Indeed, their parametrizations were used to (1) prove that most hyperelliptic curves have no rational points~\cite{thesource}, (2) bound the average size of the $2$-class groups of rings defined by odd-degree binary forms~\cite{MR3369305,MR3782066,BSHpreprint} or by monic binary forms~\cite{Siadthesis1,Siadthesis2}, and (3) determine the density of polynomials with squarefree discriminant~\cite{sqfrval,jerrycordana}. Related parametrizations on the space of pairs of symmetric integer matrices were used to bound the average size of the $2$-Selmer groups in families \mbox{of hyperelliptic curves with marked points~\cite{MR3272925,MR3156850,MR3968769}.}

Despite their remarkable versatility, the parametrizations of Bhargava and Wood have what is, in view of certain applications, a fundamental limitation: when $n$ is even and $|f_0| > 1$, the class of $I_F^{n-3}$ \emph{sometimes} fails to have a square root. Furthermore, the forms $F$ for which this failure occurs are expected to contribute non-negligibly to the average size of $\on{Cl}(R_F)[2]$. This limitation is the reason why the papers~\cite{MR3369305,MR3782066,BSHpreprint,Siadthesis1,Siadthesis2} on average sizes of $2$-torsion in class groups considered only those rings defined by \emph{odd-degree} or \emph{monic} binary forms.

The new parametrization in Theorem~\ref{thm-thisiswhyimhere} overcomes the limitation described above. Indeed, a result of Simon~\cite[Theorem~2.4]{MR2763952} states that $I_F^{n-2}$ represents the class of the inverse different of $R_F$, so Theorem~\ref{thm-thisiswhyimhere} gives a parametrization of square roots of the class of $I_F^{n-2}$, rather than $I_F^{n-3}$. Such square roots \emph{always} exist when $n$ is even, and when $n$ is odd, Hecke's theorem implies such square roots exist whenever $R_F$ is the maximal order in its fraction field. Thus, as we show in this paper, Theorem~\ref{thm-thisiswhyimhere} can be used to study the distribution of $2$-class groups of number fields defined by binary forms having \emph{any} degree $n \geq 3$ and \emph{any} leading coefficient $f_0 \neq 0$; see \S\ref{sec-impactglob}. 

%
%

As it happens, the failure of $I_F^{n-3}$ to have a square root plays a crucial role in~\cite{MR3600041}, where Bhargava, Gross, and Wang used it to prove that a positive proportion of hyperelliptic curves of any given genus have no odd-degree closed points. Essentially, they show that if $C_F$ denotes the curve $z^2 = F(x,y)$, locally soluble $2$-covers of the variety $\on{Pic}^1(C_F)$ give rise to square roots of the class of $I_F^{n-3}$; thus, if no such square root exists, $\on{Pic}^1(C_F)$ has no rational points. On the other hand, this construction is insufficient to study locally soluble $2$-covers of $\on{Pic}^0(C_F)$, unless $C_F$ has an odd-degree closed point, in which case $\on{Pic}^0(C_F) \simeq \on{Pic}^1(C_F)$. It is for this reason that the papers~\cite{MR3272925,MR3156850,MR3968769} on average sizes of $2$-Selmer groups restricted their consideration to \emph{special} families of curves with marked points. In contrast, Theorem~\ref{thm-thisiswhyimhere} can be used to bound (and conditionally determine) the average size of the $2$-Selmer group for the \emph{universal} family of \mbox{locally soluble hyperelliptic curves of any given genus; see \S\ref{sec-seljacappexp}.}
 
 In sum, a key innovation of this paper is the surprising discovery that the parametrizations of Bhargava and Wood can be modified---by simply replacing $I_F^{n-3}$ with $I_F^{n-2}$---to obtain a new parametrization with an array of compelling applications of its own.  
 Our work constitutes the first example of what could be a promising new line of inquiry, in which one investigates whether existing orbit parametrizations---such as those developed in the aforementioned theses of Bhargava and Wood, as well as those introduced in the work of Ho~\cite{MR2713823}, Thorne~\cite{MR3054927}, and others---can be modified to obtain new parametrizations with applications to solving previously inaccessible questions in arithmetic statistics.

\subsection{Summary of applications} \label{sec-summapps}

We now outline the applications of Theorem~\ref{thm-thisiswhyimhere}, some of which are treated in this paper, others of which are treated in the related papers~\cite{super,BSSpreprint}, and others still that will be treated in forthcoming work.

\subsubsection{Impact of $f_0$-monogenicity on $2$-class group distributions} \label{sec-impactglob}

An order $\mc{O}$ in a number field $K$ of degree $n$ is said to be \emph{monogenic} if it is singly generated as an algebra over $\BZ$. More generally, for an integer $f_0 \neq 0$, we say that the order $\mc{O}$ is $f_0$\emph{-monogenic} if there exists an integral binary $n$-ic form $F$ with leading coefficient $F(1,0) = f_0$ such that $\mc{O} \simeq R_F$. Note that when $|f_0| = 1$, the notions of $f_0$-monogenicity and monogenicity coincide.

In recent work~\cite{BSHpreprint}, Bhargava, Hanke, and Shankar made a surprising discovery: imposing the condition of $f_0$-monogenicity has the effect of increasing the average size of the $2$-torsion in the class groups of \emph{cubic} number fields (i.e., the case $n = 3$). In~\cite{Siadthesis1,Siadthesis2}, Siad showed that an analogous increase occurs for \emph{monogenic} number fields of \emph{any} given degree $n \geq 3$ (i.e., the case $|f_0| = 1$). The second main contribution of this paper is to use Theorem~\ref{thm-thisiswhyimhere} to study the effect of $f_0$-monogenicity on the average $2$-torsion in class groups of degree-$n$ number fields for \emph{any} $f_0 \neq 0$ and $n \geq 3$. Specifically, we determine upper bounds on the average size of the $2$-torsion in the class groups of $f_0$-monogenic number fields of given degree $n \geq 3$, and conditional on a tail estimate, we show that these bounds are optimal.

For this application, our parametrization has two key advantages. Firstly, it allows us to obtain the $2$-class group average for every choice of the pair $(f_0,n)$ \emph{simultaneously}, including the case where $|f_0| > 1$ and $n$ is even, which was not previously tractable. Secondly, as the parametrization is leading-coefficient-dependent, it is conducive to applications concerning binary forms with fixed leading coefficient. Thus, although the case where $|f_0| > 1$ and $n$ is odd can be handled using the parametrizations of Bhargava and Wood, \mbox{Theorem~\ref{thm-thisiswhyimhere} leads to a simpler proof.}

 While $f_0$-monogenicity may seem at first glance to be an unnatural condition, the bounds we obtain (which are conditionally the exact values) depend in a surprisingly beautiful way on $f_0$. As explained in \S\ref{sec-earlybird} (to follow), our results suggest that imposing the condition of $f_0$-monogenicity causes the average $2$-torsion in the class group to increase, relative to the value predicted by the heuristics of Cohen--Lenstra \cite{MR756082}, Cohen--Martinet \cite{MR866103}, Malle \cite{MR2778658}, and Breen~\cite{breen}. When $n$ is odd,
the size of this increase decays rapidly to zero as the number of odd-multiplicity prime factors of $f_0$ grows, whereas when $n$ is even, the size of this increase is bounded away from zero, independent of $f_0$. The manner in which the averages depend on $f_0$ demonstrates that $f_0$-monogenicity is a natural and interesting condition to study in the context of class group distributions for number fields.

\subsubsection{Solubility of superelliptic equations} \label{sec-supersols}

 Let $F$ be an integral binary form of odd degree $n \geq 5$. In the paper~\cite{super}, we proved that primitive integer solutions to the ``superelliptic equation'' $z^2 = F(x,y)$ give rise to square roots of the class of $I_F^{n-2}$. We then applied Theorem~\ref{thm-thisiswhyimhere} to prove that most superelliptic equations $z^2 = F(x,y)$, where $F$ ranges over separable integral binary forms of odd degree $n \gg 1$ with fixed leading coefficient $f_0 \in \BZ \smallsetminus \pm \BZ^2$, have no primitive integer solutions, and further that most such equations have a Brauer--Manin obstruction to being soluble. This gives a strong asymptotic version of a well-known result of Darmon and Granville~\cite[Theorem~1']{MR1348707}, which states that any such superelliptic equation has at most finitely many primitive integer solutions.
 
\subsubsection{$2$-Selmer groups of hyperelliptic Jacobians} \label{sec-seljacappexp}

Let $F$ be a separable integral binary form of even degree $n \geq 4$. In the paper~\cite{BSSpreprint}, we proved that $2$-Selmer elements of $\on{Pic}^0(C_F)$ give rise to square roots of the class of $I_F^{n-2}$, and we showed that the dependence of the parametrization on the leading coefficient can be overcome using the averaging methods introduced in~\cite{MR2183288,MR2745272} along with new equidistribution techniques involving Fourier analysis. As applications, we resolved two new cases of the Poonen--Rains heuristics~\cite{MR2833483} by proving that the average size of the $2$-Selmer group of locally soluble genus-$1$ curves is at most $6$, and that the second moment of the size of the $2$-Selmer group of elliptic curves is at most $15$. In forthcoming work, we will bound (and conditionally determine) the average size of the $2$-Selmer group in the universal family of locally soluble hyperelliptic \mbox{curves of any given genus.}

\subsection{Main results on class group distributions} \label{sec-intromain}

In this section, we state our results concerning the effect of $f_0$-monogenicity on the average size of the $2$-torsion in class groups of number fields. We begin by introducing the necessary notation:

\subsubsection{Notation and setup} \label{sec-notability}

Let $n \geq 3$ and $f_0 \neq 0$ be integers. Let $R$ be a principal ideal domain with fraction field $K$. Then we define $\mc{F}_n(f_0, R)$ to be the set of binary $n$-ic forms over $R$ with leading coefficient $f_0$. 
Given $F \in \mc{F}_n(f_0,R)$, let $K_F \defeq K \otimes_R R_F$. If $R = \BZ$ or $\BZ_p$ for a prime $p$, we use a subscript ${}_{\max}$ to indicate the subset of $F$ such that: (a) $R_F$ is the maximal order in $K_F$, and (b) $2$ does not ramify in $R_F$. If $R = \BZ$ or $\BR$, we use a superscript ${}^{r_1,r_2}$ to indicate the subset of $F$ such that $K_F$ has real signature $(r_1, r_2)$.

Let $N \subset \on{SL}_2$ denote the lower-triangular unipotent subgroup. The group $N(R)$ acts on $\mc{F}_n(f_0, R)$ via linear change-of-variable --- i.e., given $M = \left[\begin{smallmatrix} 1 & 0 \\ u & 1\end{smallmatrix}\right] \in N(R)$ and $F \in \mc{F}_n(f_0, R)$, we have $(M \cdot F)(x,y) = F(x+uy,y)$ --- and if $F'$ is an $N(R)$-translate of $F$, then $R_{F'} \simeq R_F$. Thus, to minimize redundancies, we count integral binary forms up to the action of $N(\BZ)$.

For $F \in \mc{F}_n(f_0, \BR)$, we define its height $\on{H}(F)$ as follows: letting $\wt{F}(x,y) = x^n + \sum_{i = 2}^n \wt{f}_i  x^{n-i}y^i \in \mc{F}_n(1,\BR)$ denote the unique $N(\BR)$-translate of $f_0^{-1}F(x,f_0y)$ with $x^{n-1}y$-coefficient $0$, then we put
\begin{equation} \label{eq-secondtimeheight1}
\on{H}(F) \defeq \max\{|\wt{f}_i|^{1/i} : 2 \leq i \leq n\}.
\end{equation}
Note that $\on{H}$ descends to a well-defined height on $N(\BZ) \backslash \mc{F}_{n}(f_0, \BR)$. Given an $N(\BZ)$-invariant subset $\mc{F} \subset \mc{F}_n(f_0,\BZ)$ and an $N(\BZ)$-invariant map \mbox{$\phi \colon \mc{F} \to \BZ_{\geq 0}$,} the average of $\phi$ on $\mc{F}$ is given by
\begin{equation} \label{eq-avedefs}
\underset{F \in \mc{F}}{\on{Avg}}\,\, \phi(F) \defeq \lim_{X \to \infty}  \left. \bigg(\textstyle{\sum}_{\substack{F \in N(\BZ) \backslash \mc{F} \\ \on{H}(F) < X}}\,\,\, \phi(F)\bigg)\middle/ \bigg(\displaystyle\textstyle{\sum}_{\substack{F \in N(\BZ) \backslash \mc{F} \\ \on{H}(F) < X}}\,\,\, 1\bigg)\right.
\end{equation}
We write $\on{Avg}_{F \in \mc{F}} \phi(F) \leq \star$ if the limsup as $X \to \infty$ of the fraction in~\eqref{eq-avedefs} is at most $\star$.

\subsubsection{Statements of results, part $(i)${}$:$ odd degree}
Factor $f_0$ as $f_0 = m^2k$, where $k$ is squarefree. Our first main result bounds (and conditionally determines) the average $2$-torsion in the class groups of the ($f_0$-monogenic) number fields cut out by binary forms in $\mc{F}_{n,\max}^{r_1,r_2}(f_0, \BZ)$, generalizing~\cite[Theorem 5]{BSHpreprint} \mbox{and~\cite[Theorem 6]{Siadthesis1}:}

\begin{theorem} \label{cor-oddodd}
Let $n \geq 3$ be odd. Then we have
\begin{align} \label{eq-odd22}
& \underset{F \in \mc{F}_{n,\max}^{r_1,r_2}(f_0,\BZ)}{\on{Avg}}\,\, \#\on{Cl}(R_F)[2] \leq  1 + 2^{1 - r_1 - r_2} \bigg(1 + \frac{1}{k^{\frac{n-3}{2}}  \sigma(k)}\bigg)
\end{align}
where $\sigma(k) \defeq \sum_{1 \leq d \mid k} d$. If the estimate~\eqref{eq-conjest} holds, then we have equality in~\eqref{eq-odd22}.
\end{theorem}

In fact, our methods allow us to prove a generalization of Theorem~\ref{cor-oddodd} to families of fields in $\mc{F}_{n,\max}^{r_1,r_2}(f_0,\BZ)$ satisfying certain ``acceptable'' infinite sets of local conditions (see \S\ref{sec-defaccept} for the definition). To state this generalization, we make the following definition: for a prime $p \mid k$, we say that a primitive form $F \in \mc{F}_{n}(f_0,\BZ_p)$ is \emph{squareful} if $F/y$ is a unit multiple of a perfect square modulo $p$. Note that, by~\cite[Proposition~8.3]{MR1697859}, squarefulness amounts to a condition on the splitting type of $p$ in $R_F$. Then we have the following result, generalizing~\cite[Theorem~7]{BSHpreprint}:

\begin{theorem} \label{thm-main1}
Let $n \geq 3$ be odd, and let $\Sigma$ be a family of local specifications defining an ``acceptable'' subfamily $\mc{F}_n(f_0,\Sigma) \subset \mc{F}_{n,\max}^{r_1,r_2}(f_0,\BZ)$. For each prime $p$, let $r_p(\Sigma)$ denote the $p$-adic density within $\mc{F}_n(f_0, \Sigma)$ of the subset of forms $F \in \mc{F}_n(f_0, \Sigma)$ that are squareful at $p$. Then we have
\begin{equation} \label{eq-odd11}
\underset{F \in \mc{F}_n(f_0, \Sigma)}{\on{Avg}}\,\,\#\on{Cl}(R_F)[2] \leq  1 + 2^{1 - r_1 - r_2} \bigg(1 + \prod_{p \mid k} r_p(\Sigma)\bigg)
\end{equation}
If the estimate~\eqref{eq-conjest} holds, then we have equality in~\eqref{eq-odd11}.
\end{theorem}
The squareful densities $r_p(\Sigma)$ can be calculated in various cases of interest. For example, the $p$-adic density of squareful forms among forms defining maximal orders is given by Theorem~\ref{thm-denscalc} to be $p^{\frac{1-n}{2}}  (1 + p^{-1})^{-1}$, and substituting this density into Theorem~\ref{thm-main1} yields Theorem~\ref{cor-oddodd}. Upon applying Theorem~\ref{thm-main1} to the subset of forms that \emph{fail} to be squareful in at least one place, we obtain the following result, generalizing~\cite[Theorem~6]{BSHpreprint}:
\begin{theorem} \label{thm-nosquareshere}
With notation as in Theorem~\ref{thm-main1}, suppose $r_p(\Sigma) = 0$ for some $p \mid k$. Then we have
\begin{align}
& \underset{F \in \mc{F}_n(f_0, \Sigma)}{\on{Avg}}\,\,\#\on{Cl}(R_F)[2] \leq  1 + 2^{1 - r_1 - r_2}.\label{eq-odd33}
\end{align}
If the estimate~\eqref{eq-conjest} holds, then we have equality in~\eqref{eq-odd33}.
\end{theorem}

\subsubsection{Statements of results, part $(ii)${}$:$ even degree}

We say that a primitive form \mbox{$F \in \mc{F}_n(f_0,\BZ_p)$} is \emph{evenly ramified} if $F$ is a unit multiple of a perfect square modulo $p$. Note that, by~\cite[Proposition~8.3]{MR1697859}, $F$ being evenly ramified amounts to $p$ having total ramification of degree $2$ in $R_F$. Our next result bounds (and conditionally determines) the average $2$-torsion in the ordinary and narrow class groups of the ($f_0$-monogenic) number fields cut out by binary forms in $\mc{F}_{n,\max}^{r_1,r_2}(f_0,\BZ)$, generalizing~\cite[Theorem~9]{Siadthesis2}:
\begin{theorem} \label{thm-main2}
Let $n \geq 4$ be even, and let $\Sigma$ be a family of local specifications defining an ``acceptable'' subfamily $\mc{F}_n(f_0,\Sigma) \subset\mc{F}_{n,\max}^{r_1,r_2}(f_0,\BZ)$. For each prime $p$, let $r_p(\Sigma)$ denote the $p$-adic density within $\mc{F}_n(f_0, \Sigma)$ of the subset of forms $F \in \mc{F}_n(f_0, \Sigma)$ that are evenly ramified at $p$. If $r_1 = 0$, then \mbox{$\#\on{Cl}(R_F)[2]  = \#\on{Cl}^+(R_F)[2]$, and}
\begin{align}
& \underset{F \in \mc{F}_n(f_0, \Sigma)}{\on{Avg}}\,\,\#\on{Cl}(R_F)[2] \leq (1 + 2^{2-r_2})  \prod_{p > 2} (1 + r_p(\Sigma)) \label{eq-even11}
\end{align}
If $r_1 > 0$, then we have the following pair of inequalities:
\begin{align}
& \underset{F \in \mc{F}_n(f_0, \Sigma)}{\on{Avg}}\,\,\#\on{Cl}(R_F)[2]  \leq \label{eq-even22} \frac{1}{2}  \bigg((1 + 2^{3-r_1-r_2})  \prod_{p > 2} (1 + r_p(\Sigma)) +  \prod_{p > 2} \big(1 + (-1)^{\frac{p-1}{2}}  r_p(\Sigma)\big) \bigg) \\[0.1cm]
 & \underset{F \in \mc{F}_n(f_0, \Sigma)}{\on{Avg}}\,\,\#\on{Cl}^+(R_F)[2]  \leq  (1 +2^{- r_2} + 2^{1-\frac{n}{2}})  \prod_{p > 2} (1 + r_p(\Sigma)) \label{eq-even44}
\end{align}
If the estimate~\eqref{eq-conjest} holds, then we have equality in~\eqref{eq-even11},~\eqref{eq-even22}, and~\eqref{eq-even44}.
\end{theorem}

The even ramification densities $r_p(\Sigma)$ can be calculated in various cases of interest. For example, if $p \nmid f_0$, the $p$-adic density of evenly ramified forms among forms defining maximal orders is given by Theorem~\ref{thm-evendensecalcs} to be $p^{-\frac{n}{2}}  (1 + p^{-1})^{-1}$. However, unlike in the odd-degree case, substituting this formula for $r_p(\Sigma)$ into any one of~\eqref{eq-even11},~\eqref{eq-even22}, or~\eqref{eq-even44} fails to produce a closed-form expression. Upon applying Theorem~\ref{thm-main2} to the subset of forms that are nowhere evenly ramified, we obtain the following result, generalizing~\cite[Corollary~12]{Siadthesis2}:
\begin{theorem}\label{cor-nowhere}
With notation as in Theorem~\ref{thm-main2}, suppose $r_p(\Sigma) = 0$ for every $p$. If $r_1 = 0$, then $\#\on{Cl}(R_F)[2] = \#\on{Cl}^+(R_F)[2]$, and
\begin{align}
& \underset{F \in \mc{F}_n(f_0, \Sigma)}{\on{Avg}}\,\,\#\on{Cl}(R_F)[2] \leq 1 + 2^{2-r_2}\label{eq-even55}
\end{align}
If $r_1 > 0$, then we have the following pair of inequalities:
\begin{align}
& \underset{F \in \mc{F}_n(f_0, \Sigma)}{\on{Avg}}\,\,\#\on{Cl}(R_F)[2]   \leq 1 +  2^{2-r_1-r_2}\label{eq-even66}\\[0.1cm]
 & \underset{F \in \mc{F}_n(f_0, \Sigma)}{\on{Avg}}\,\,\#\on{Cl}^+(R_F)[2]   \leq 1 + 2^{- r_2} +  2^{1-\frac{n}{2}}\label{eq-even88}
\end{align}
If the estimate~\eqref{eq-conjest} holds, then we have equality in~\eqref{eq-even55},~\eqref{eq-even66}, and~\eqref{eq-even88}.
\end{theorem}

An easy consequence of Theorem~\ref{cor-nowhere} is that number fields cut out by even-degree binary forms (with any fixed nonzero leading coefficient) often have odd class number; 
a similar result for odd-degree forms (with varying leading coefficient) was proven in Ho--Shankar--Varma~\cite[Corollary~6.7]{MR3782066}.
\begin{corollary}
The density of $F \in \mc{F}_{n,\max}^{r_1, r_2}(f_0, \BZ)$ with $2 \nmid \#\on{Cl}(R_F)$ is $1 - O(2^{-\frac{n}{2}})$.
\end{corollary}

\subsection{Comparison with class group heuristics} \label{sec-earlybird}

In their foundational paper~\cite{MR756082}, Cohen and Lenstra formulated definitive heuristics for the distribution of the class groups of quadratic fields. Their heuristics were later generalized by Cohen and Martinet to predict the distribution of class groups of number fields of any fixed degree over a fixed base field, with the caveat that their predictions about the $p$-part of the class group only apply when $p$ is a so-called \emph{good} prime~\cite{MR866103}. Subsequently, Malle adjusted the heuristics of Cohen--Martinet to account for discrepancies in the $p$-parts of the class groups of field extensions over a base field containing the $p^{\text{th}}$-roots of unity~\cite{MR2778658}. The heuristics of Cohen--Lenstra--Martinet--Malle yield the following prediction about the average size of the $2$-torsion in class groups of odd-degree number fields:
\begin{conjecture}[Cohen--Lenstra--Martinet--Malle] \label{conj-theone}
  Let $n \geq 3$ be odd. Consider the set of isomorphism classes of degree-$n$ number fields with real signature $(r_1, r_2)$ and Galois group $S_n$. When these fields are ordered by discriminant, the average size of the $2$-torsion in the class group is $1 + 2^{1-r_1-r_2}$.
\end{conjecture}

To this day, only one case of Conjecture~\ref{conj-theone} has ever been proven: using a variant of the parametrization mentioned in \S\ref{sec-rellers}, Bhargava determined the average $2$-torsion in class groups of cubic number fields~\cite{MR2183288}. Prior to Bhargava, the only other case of the Cohen--Lenstra--Martinet--Malle heuristics that had been tackled was that of the average $3$-torsion in class groups of quadratic fields, the determination of which is due to Davenport and Heilbronn in their seminal paper~\cite{MR491593}.

In the years since Bhargava's breakthrough, a considerable body of evidence has been generated to support the view that Conjecture~\ref{conj-theone} remains robust when one passes to subfamilies of fields defined by local or global conditions. For instance, Bhargava and Varma showed that, in the cubic case, the average stays the same for subfamilies of fields satisfying infinite sets of local conditions~\cite{MR3369305}. Subsequently, Ho, Shankar, and Varma considered families of fields cut out by odd-degree binary forms and applied Wood's generalization of Bhargava's parametrization to prove that the average $2$-torsion in their class groups is as predicted by Conjecture~\ref{conj-theone} --- even though fields cut out by binary $n$-ic forms are expected to be sparse among all degree-$n$ fields as soon as $n > 3$~\cite{MR3782066}. 

In light of the apparent robustness of Conjecture~\ref{conj-theone}, Theorems~\ref{cor-oddodd} and~\ref{thm-main1} are quite surprising: somehow, imposing the condition of $f_0$-monogenicity causes the average $2$-torsion in the class group to deviate from the conjectured value. The effect of $f_0$-monogenicity on the averages in~\eqref{eq-odd22} and~\eqref{eq-odd11} can be seen in the extra term $2^{1 - r_1 - r_2}  \prod_{p \mid k} r_p(\Sigma)$, which is nonzero whenever a positive density of the forms under consideration are squareful at each prime $p \mid k$.

The problem of studying the distribution of the $2$-parts, or more generally the $2$-Sylow subgroups, of class groups of even-degree number fields is far more subtle --- here, $2$ is a bad prime in the sense of Cohen--Martinet. Significant progress has been made in the quadratic case: Fouvry and Kl\"{u}ners determined the distribution of the $4$-torsion~\cite{MR2276261} in class groups of quadratic fields, and Smith recently handled the case of the $2^d$-torsion for every $d \geq 2$~\cite{alex}. Along with Siad's results in the monogenic case, Theorems~\ref{thm-main2} and~\ref{cor-nowhere} are the first of their kind to describe the distribution of the $2$-torsion in class groups of number fields of even degree $n \geq 4$. 

A complete set of heuristics for the distribution of the $2$-torsion in class groups of even-degree number fields remains to be formulated. A key obstacle to formulating such heuristics has been understanding the effect of even ramification (also known as genus theory in the sense of Gauss), which can cause the $2$-torsion in the class group to increase. At least for number fields cut out by binary forms, Theorem~\ref{thm-main2} demonstrates that even ramification at a prime $p$ has a doubling effect on the average $2$-torsion in the class group. This is not entirely surprising: if an integral binary form $F$ defining a maximal order $R_F$ is evenly ramified over $\BZ_p$, then the ideal $(p)$ has a square root, which could contribute to the \mbox{$2$-torsion in the class group of $R_F$.}

Nonetheless, in the absence of even ramification, we have the following recent conjecture of Breen (see~\cite[Conjecture~6.0.2]{breen}), building on the work of Cohen--Lenstra--Martinet--Malle:

\begin{conjecture} \label{conj-thetwo}
  Let $n \geq 4$ be even. Consider the set of isomorphism classes of nowhere-evenly-ramified degree-$n$ number fields with real signature $(r_1, r_2)$ and Galois group $S_n$. When these fields are ordered by discriminant, the average size of the $2$-torsion in the class group is $1 + 2^{1-r_1-r_2}$.
\end{conjecture}
The average we obtained in Theorem~\ref{cor-nowhere} is notably larger than that predicted in Conjecture~\ref{conj-thetwo}. This bears both similarities to and differences from the odd-degree case: while imposing the condition of $f_0$-monogenicity has an increasing effect on the mean number of $2$-torsion elements in the class groups of number fields of \emph{any} degree $n \geq 3$, this effect \emph{vanishes} (resp., \emph{remains constant}) upon averaging this mean over all leading coefficients when $n$ is odd (resp., even).

In light of the above discussion, it is natural to ask: \emph{Why} does $f_0$-monogenicity have an increasing effect on the average $2$-torsion? The results of this paper bring us closer to answering this question. To see how, note that the extra $2$-torsion is supported in the odd-degree case on fields cut out by binary forms that are squareful at every prime $p \mid k$, and in the even-degree case on all fields cut out by binary forms. The fields on which the extra $2$-torsion is supported share a beautiful property: the set of square roots of the class of the inverse different --- which makes up a torsor of the $2$-torsion in the class group --- has a ``distinguished'' element, in a sense to be made precise in \S\ref{sec-datsdist}. Indeed, we have the following result:
\begin{theorem} \label{thm-diffsquare}
Let $n \geq 3$, and take a primitive form $F \in \mc{F}_{n}^{r_1,r_2}(f_0,\BZ)$. If $n$ is even or if $n$ is odd and $F$ belongs to $\mc{F}_{n,\max}(f_0,\BZ_p)$ for each prime $p \mid f_0$ and is squareful at each $p \mid k$, then the ideal class of the inverse different of $R_F$ has a ``distinguished'' square root.
\end{theorem}
We expect that the presence of a distinguished square root of the class of the inverse different plays a role in causing the aforementioned increase in the average $2$-torsion in the class group, and it remains open to determine the precise mechanism by which this increase occurs.
\begin{remark}
Theorem~\ref{thm-diffsquare} gives a partial answer to a question of Emerton, who was motivated by Hecke's theorem to ask whether the class of the different has a canonical square root~\cite{52815}. 

As it happens, Hecke's theorem does not extend to all non-maximal orders in number fields. Indeed in~\cite[\S4]{MR2523319}, Simon shows that when $F(x,y) = 7x^3 + 10x^2y + 5xy^2 + 6y^3$, the ring $R_F$ is not the maximal order in $K_F$ and the class of the inverse different of $R_F$ is not a square! Nonetheless, Theorem~\ref{thm-diffsquare} implies that when forms $F \in \mc{F}_{n}^{r_1,r_2}(f_0, \BZ)$ are ordered by height, a positive proportion are such that $R_F$ satisfies the conclusion of Hecke's \mbox{theorem despite not being maximal.}
\end{remark}

We conclude by discussing in detail what happens when the leading coefficient is permitted to vary. In~\cite{MR3782066}, Ho, Shankar, and Varma obtained a (conditionally tight) upper bound on the mean number of $2$-torsion elements in the class groups of rings defined by binary forms of \emph{odd degree} $n$, when such forms are ordered by the height function $\on{H}'$ defined by $\on{H}'(f_0x^n + \cdots + f_ny^n) \defeq \max\{|f_0|, \dots, |f_n|\}$. Theorem~\ref{thm-main1} is consistent with their result --- averaging the bound in Theorem~\ref{thm-main1} over all leading coefficients $f_0$ yields the bound obtained in~\cite[Theorem~2]{MR3782066}. This averaging argument can be made precise using the Fourier-analytic equidistribution methods developed in~\cite{BSSpreprint}, thus yielding a new proof of the result of Ho, Shankar, and Varma. A key issue that arises is that the height function $\on{H}$ that we use to order binary forms changes with $f_0$ and is different from $\on{H}'$. This issue is resolved by making two observations: first, when counting $2$-torsion classes of rings $R_F$ defined by binary forms $F$ with $\on{H}'(F) < X$, all but negligibly many classes arise from forms $F$ with leading coefficient $f_0 \asymp X$; and second, if $F$ is a binary form with $\on{H}'(F) < X$ and $f_0 \asymp X$, then $\on{H}'(F) \asymp \on{H}(F)$. In other words, when averaging over all leading coefficients $f_0$, the regime of large $f_0$ dominates, and in this regime, the height functions $\on{H}$ and $\on{H}'$ are roughly the same. In forthcoming work with Bhargava and Shankar, we will apply a similar averaging argument to prove an analogue of Theorem~\ref{thm-main2} for binary forms with varying leading coefficient. Specifically, we will determine (conditionally tight) upper bounds on the average size of the $2$-torsion in the class groups of number fields cut out by \emph{even-degree} forms ordered by the height $\on{H}'$.

\subsection{Method of proof} \label{sec-methode}

We now summarize the proofs of our main results. First, in \S\ref{sec-buildabear}, we introduce the new parametrization alluded to in Theorem~\ref{thm-thisiswhyimhere} and prove several useful properties about it. Because this parametrization 
produces $G_n(\BZ)$-orbits of pairs $(A,B) \in \BZ^2 \otimes_\BZ \Sym_2 \BZ^n$ such that $\det(x  A + y  B) = \pm f_0^{-1}  F(x, f_0  y)$ as output, it has the effect of reducing the problem of counting $2$-torsion classes in rings associated to forms with leading coefficient $f_0$ into the simpler problem of counting $G_n(\BZ)$-orbits on $\BZ^2 \otimes_{\BZ} \Sym_2 \BZ^n$ over the space of forms with leading coefficient $\pm 1$, which is precisely the problem that Siad solved in~\cite{Siadthesis1,Siadthesis2}. 
By reducing the non-monic case to the monic case (up to a sign), our new parametrization allows us to largely avoid doing the intricate mass calculations that form a key ingredient in the argument in~\cite{BSHpreprint}.
We conclude \S\ref{sec-buildabear} by explaining the connection between our parametrization, which \emph{a priori} concerns the class of the inverse different, and the problem of counting $2$-torsion elements in class groups.

Next, in \S\ref{sec-arith}, we characterize the orbits of $G_n(R)$ on $R^2 \otimes_R \Sym_2 R^n$ that arise from our parametrization, where $R = \BZ_p$ for a prime $p$ or $R$ is a field. We use this characterization to prove Theorem~\ref{thm-diffsquare}; we also calculate the squareful and even ramification densities $r_p(\Sigma)$ for interesting families of local specifications $\Sigma$. Finally, in \S\ref{sec-theproof}, we complete the proofs of the main results by combining the parametrization from \S\ref{sec-buildabear}, the characterization of $p$-adic orbits and density calculations from \S\ref{sec-arith}, and the asymptotics for the number of $G_n(\BZ)$-orbits on $\BZ^2 \otimes_\BZ \Sym_2 \BZ^n$ of bounded height that Siad obtained in~\cite{Siadthesis1,Siadthesis2}. The trickiest part of this final step is sieving to orbits that satisfy $2$-adic local specifications, which we \mbox{achieve using \emph{ad hoc} techniques.}

\section{Orbit parametrization} \label{sec-buildabear}

Let $R$ be a principal ideal domain, and let $K$ be the fraction field of $R$. Let $n \geq 1$, and let $F(x,y) = \sum_{i = 0}^{n} f_i x^{n-i}y^i \in R[x,y]$ be a binary form of degree $n$ having leading coefficient $f_0 \in R\smallsetminus\{0\}$ that is separable over $K$. Consider the monic form $f_0^{-1}  F(x,f_0  y)$; we call this form the \emph{monicized form} of $F$ and denote it by $F_{\mathsf{mon}}$. Note that $F_{\mathsf{mon}}$ is monic and has coefficients in $R$.

Let $\on{Mat}_{n}$ denote the affine $\BZ$-scheme whose $S$-points are given by the set of $n \times n$ matrices with entries in $S$. For a matrix $M \in \on{Mat}_{n}(S)$, we define
$$\on{inv}(M) \defeq (-1)^{\lfloor \frac{n}{2} \rfloor}  \det(M).$$ Then the affine $\BZ$-scheme whose $S$-points are given by $S^2 \otimes_S \Sym_2 S^n$ can be thought of as a representation of $G_n$, where $g \in G_n(S)$ acts on a pair of symmetric matrices $(A,B) \in S^2 \otimes_S \Sym_2 S^n$ by $g \cdot (A,B) = (gAg^T, gBg^T)$. (Note that when $n$ is odd, the action of $-\on{Id} \in \on{SL}_n^{\pm}(S)$ centralizes elements of $S^2 \otimes_S \Sym_2 S^n$, so it suffices to work with $\on{SL}_n(S)$ instead of $\on{SL}_n^{\pm}(S)$.)

In this section, we define the ring $R_F$ cut out by $F$, and we parametrize square roots of the class of the inverse different of $R_F$ in terms of $G_n(R)$-orbits of pairs $(A,B) \in R^2 \otimes_R \Sym_2 R^n$ satisfying $\on{inv}(x  A + y  B) = r  F_{\mathsf{mon}}(x,y)$ where $r \in R^\times$ is a unit, thus proving Theorem~\ref{thm-thisiswhyimhere}.

\subsection{Rings associated to binary forms} \label{sec-ringsbins}

 Before we describe our parametrization, we define and recall the basic properties of rings associated to binary forms. Consider the \'{e}tale $K$-algebra $K_F \defeq K[x]/(F(x,1))$, and let $\theta$ denote the image of $x$ in $K_F$. For each $i \in \{1, \dots, n-1\}$, let $p_i$ be the polynomial defined by $p_i(t) \defeq \sum_{j = 0}^{i-1} f_j t^{i-j}$, and let $\zeta_i \defeq p_i(\theta)$. To the binary form $F$, there is a naturally associated free $R$-submodule $R_F \subset K_F$ having rank $n$ and \mbox{$R$-basis given by}
\begin{equation} \label{eq-rfbasis}
R_F \defeq R \langle 1, \zeta_1, \zeta_2, \dots, \zeta_{n-1} \rangle.
\end{equation}
The module $R_F$ has been studied extensively in the literature. In~\cite[proof of Lemma~3]{MR0306119}, Birch and Merriman proved that the discriminant of $F$ is equal to the discriminant of $R_F$, and in~\cite[Proposition 1.1]{MR1001839}, Nakagawa proved that $R_F$ is actually a ring with the following multiplication table: taking $1 \leq i \leq j \leq n-1$ and setting $\zeta_0 \defeq 1$ and $\zeta_{n} \defeq -f_{n}$ for convenience, we have
\begin{equation} \label{eq-multtable}
\zeta_i\zeta_j = \sum_{k = j+1}^{\min\{i+j,n\}} f_{i+j-k}\zeta_k - \sum_{k = \max\{i+j-n,1\}}^i f_{i+j-k}\zeta_k.\footnote{Nakagawa's results are stated for irreducible $F$, but as noted in~\cite[\S2.1]{MR2763952}, their proofs continue to hold otherwise.}
\end{equation} 

Also contained in $K_F$ is a natural family of free $R$-submodules $I_F^k$ of rank $n$ for each integer $k \in \{0, \dots, n-1\}$, having $R$-basis given by
\begin{equation} \label{eq-idealdef}
I_F^k \defeq R\langle1,\theta, \dots, \theta^k,\zeta_{k+1}, \dots, \zeta_{n-1} \rangle.
\end{equation}
Note that $I_F^0 = R_F$ is the unit ideal. By~\cite[Proposition A.1]{MR2763952}, each $I_F^k$ is an $R_F$-module and hence a fractional ideal of $R_F$; moreover, the notation $I_F^k$ makes sense, since $I_F^k$ is the $k^{\text{th}}$ power of $I_F^1$. By~\cite[Corollary 2.5 and Proposition A.4]{MR2763952}, if $n > 2$, the fractional ideals $I_F^k$ are invertible precisely when $R_F$ is Gorenstein, which happens precisely when $F$ is primitive (i.e., $\gcd(f_0, \dots, f_n) = 1$).

\begin{remark}
The ring $R_F$ and ideals $I_F^k$ admit a simple geometric interpretation: by~\cite[\S1]{MR2763952}, we have 
\begin{equation} \label{eq-closed}
\Spec R_F = \on{Proj} \big(\BZ[x,y]/(F(x,y))\big) \hookrightarrow \on{Proj} \BZ[x,y] = \BP_\BZ^1,
\end{equation}
so $R_F$ can be thought of as the coordinate ring of the closed subscheme of $\BP_\BZ^1$ naturally associated to the binary form $F$. Letting $\mc{O}_{R_F}(k)$ denote the pullback to $\Spec R_F$ along the closed embedding~\eqref{eq-closed} of the line bundle of degree $k$ on $\BP_\BZ^1$, we have that $I_F^k \simeq H^0(\mc{O}_{R_F}(k))$ as $R_F$-modules.
\end{remark}

Given a fractional ideal $I$ of $R_F$ with specified basis (i.e., a \emph{based} fractional ideal), the \emph{norm} of $I$, denoted by $\on{N}(I)$, is the determinant of the $K$-linear transformation taking the basis of $I$ to the basis of $R_F$ in~\eqref{eq-rfbasis}. It is easy to check that $\on{N}(I_F^k) = f_0^{-k}$ for each $k$ with respect to the basis in~\eqref{eq-idealdef}. The norm of $\kappa \in K_f^\times$ is the
determinant of the $K$-linear transformation taking the basis $\langle
1, \zeta_1, \dots, \zeta_{n-1}\rangle$ to the basis $\langle \kappa,
\kappa  \zeta_1, \dots, \kappa  \zeta_{n-1}\rangle$. Note that we have the multiplicativity relation 
\begin{equation} \label{eq-normsmult}
\on{N}(\kappa  I) = \on{N}(\kappa)  \on{N}(I)
\end{equation}
for any $\kappa \in K_F^\times$ and fractional ideal $I$ of $R_F$ with a specified basis.

The ideal $I_F^{n-2}$ plays a key role in the parametrization that we introduce in \S\ref{sec-bigconstruct}. Indeed, by~\cite[Theorem~2.4]{MR2763952} (cf.~\cite[Proposition 14]{MR2523319}), $[I_F^{n-2}]$ is the class of the inverse different of $R_F$; i.e.,
\begin{equation} \label{eq-classeq}
[I_F^{n-2}] = \big[\Hom_\BZ(R_F, \BZ) \big] \in \on{Cl}(R_F).
\end{equation}
Thus, to parametrize square roots of the ideal class of the inverse different of $R_F$, we can work with the ideal $I_F^{n-2}$, which has the virtue of possessing the explicit $\BZ$-basis given by~\eqref{eq-idealdef}.
\begin{remark}
There is a geometric way to verify~\eqref{eq-classeq}. The class of the inverse different of $R_F$ corresponds to the class of the relative dualizing sheaf $\omega_{R_F/\BZ}$ of $\Spec R_F$ over $\Spec \BZ$. Thus, it suffices to show that $I_F^{n-2} \simeq H^0(\omega_{R_F/\BZ})$ as $R_F$-modules. Note that $\omega_{R_F/\BZ}$ is isomorphic to the dualizing sheaf $\omega_{R_F}$ of $\Spec R_F$ as $\on{Cl}(\BZ) = \{1\}$, so applying adjunction to~\eqref{eq-closed} yields that \mbox{$\omega_{R_F/\BZ} \simeq \omega_{R_F} \simeq \mc{O}_{R_F}(n-2)$.} It follows that $I_F^{n-2} \simeq H^0(\mc{O}_{R_F}(n-2)) \simeq H^0(\omega_{R_F/\BZ})$ as $R_F$-modules.
\end{remark}

We now discuss how $K_F$, $R_F$, and $I_F^k$ transform under the action of $\gamma \in \on{SL}_2(R)$ on binary $n$-ic forms defined by $\gamma \cdot F = F((x,y) \cdot \gamma)$. If $F' = \gamma \cdot F$, then $K_{F'} \simeq K_{F}$, and the rings $R_{F'}$ and $R_F$ are identified under this isomorphism (see~\cite[Proposition 1.2]{MR1001839} or~\cite[\S2.3]{MR2763952}). On the other hand, the ideals $I_{F'}^k$ and $I_{F}^k$ are isomorphic as $R_F$-modules but \emph{need not} be identified under the isomorphism $K_{F'} \simeq K_F$. Indeed, as explained in~\cite[(7)]{thesource}, these ideals are related as follows: if $\gamma = \left[\begin{smallmatrix} a & b \\ c & d\end{smallmatrix}\right]$ and $F'$ has nonzero leading coefficient, then for each $k \in \{0, \dots, n-1\}$, the composition
\begin{equation} \label{eq-idealident}
\begin{tikzcd} 
I_F^k \arrow[hook]{r}{\phi_{k,\gamma}} &  K_F \arrow{r}{\sim} & K_{F'}
\end{tikzcd}
\end{equation}
is an injective map of $R_F$-modules with image $I_{F'}^{k}$, where $\phi_{k,\gamma}$ sends $\delta \in I_F^k$ to $(-b\theta+a)^{-k}  \delta \in K_F$. Note that when 
$\gamma \in N(R)$, we have $-b\theta + a = 1$, so $\phi_{k,\gamma}$ is in fact the identity map in this case.

\subsection{Construction of an integral orbit} \label{sec-bigconstruct}

Fix a unit $r \in R^\times$. Let $I$ be a based fractional ideal of $R_F$, and suppose that there exists $\alpha \in K_F^\times$ such that
\begin{equation} \label{eq-formreflater}
I^2 \subset \alpha  I_F^{n-2} \quad \text{and} \quad \on{N}(I)^2 = r  \on{N}(\alpha)  \on{N}(I_F^{n-2}).
\end{equation}
In this section, we show that the pair $(I, \alpha)$ naturally gives rise to a $G_n(R)$-orbit of a pair of matrices $(A,B) \in R^2 \otimes_R \Sym_2 R^{n}$ satisfying the relation $\on{inv}(x  A + y  B) = r  F_{\mathsf{mon}}(x,y)$. Moreover, we provide a complete characterization of all pairs $(A,B) \in R^2 \otimes_R \Sym_2 R^{n}$ that arise in this manner.

Consider the symmetric bilinear form 
\begin{equation} \label{eq-defbilin}
\langle-,-\rangle \colon I \times I \to K_F, \quad (\beta, \gamma) \mapsto \langle \beta, \gamma \rangle = \alpha^{-1}  \beta\gamma.
\end{equation}
By assumption, $\langle -, - \rangle$ has image contained in $I_F^{n-2}$. Next, let $\pi_{n-2},\pi_{n-1} \in \Hom_{R}(I_F^{n-2},R)$ be the maps defined on the $R$-basis~\eqref{eq-idealdef} of $I_F^{n-2}$ by 
\begin{align*}
& \pi_{n-2}(a_0 +a_1\theta + \cdots + a_{n-2}\theta^{n-2} + a_{n-1}\zeta_{n-1}) = a_{n-2} , \text{ and}\\
& \pi_{n-1}(a_0 +a_1\theta + \cdots + a_{n-2}\theta^{n-2} + a_{n-1}\zeta_{n-1}) = -a_{n-1}.
\end{align*}
Let $A,B$ be the symmetric $n \times n$ matrices over $R$ representing the symmetric bilinear forms $\pi_{n-1} \circ \langle -, - \rangle, \pi_{n-2} \circ \langle -, - \rangle  \colon I \times I \to R$, respectively, with respect to the chosen basis of $I$.
\begin{theorem} \label{thm-theconstruction}
Given the above setup, we have that $\on{inv}(x  A + y  B) = r F_{\mathsf{mon}}(x,y)$.
\end{theorem}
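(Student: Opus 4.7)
The plan is to compute $\det(xA+zB)$ directly by recognizing the bilinear forms $\pi_{n-1}\circ\langle-,-\rangle$ and $\pi_{n-2}\circ\langle-,-\rangle$ as trace pairings on the \'etale $K$-algebra $K_F$. The key observation is that the functionals $\pi_{n-1}$ and $\pi_{n-2}$, when extended $K$-linearly from $I_F^{n-2}$ to all of $K_F$, admit the explicit descriptions
$$\pi_{n-1}(\xi) \;=\; -\on{Tr}_{K_F/K}\!\left(\tfrac{\xi}{F'(\theta,1)}\right), \qquad \pi_{n-2}(\xi) \;=\; \on{Tr}_{K_F/K}\!\left(\tfrac{f_0\,\theta\,\xi}{F'(\theta,1)}\right).$$
I would verify these by passing to an algebraic closure $\bar K$ of $K$, identifying $K_F \otimes_K \bar K \simeq \bar K^n$ via the roots $(\theta_1,\ldots,\theta_n)$ of $F(x,1)$, and invoking Lagrange interpolation (equivalently, the explicit form of the inverse Vandermonde matrix) to check both formulas against the power basis $1,\theta,\ldots,\theta^{n-1}$ of $K_F$.

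Combining the two identities yields $x\pi_{n-1}(\xi)+z\pi_{n-2}(\xi) = \on{Tr}_{K_F/K}(g\,\xi)$ with $g := (f_0 z\theta-x)/F'(\theta,1)$, so the bilinear form whose matrix in the chosen basis $\beta_1,\ldots,\beta_n$ of $I$ is $xA+zB$ is exactly $(\beta,\gamma)\mapsto \on{Tr}_{K_F/K}(\alpha^{-1} g\,\beta\gamma)$. I then invoke the standard identity
$$\det\!\bigl(\on{Tr}_{K_F/K}(\mu\,\beta_i\beta_j)\bigr)_{i,j} \;=\; \on{N}_{K_F/K}(\mu)\,\cdot\,\det\!\bigl(\on{Tr}_{K_F/K}(\beta_i\beta_j)\bigr)_{i,j} \qquad (\mu\in K_F),$$
which follows from factoring $B_\mu(v,w) = \on{Tr}(\mu v w)$ as $B_0(\mu v, w)$ where $B_0$ is the standard trace pairing. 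Applied with $\mu = \alpha^{-1}g$, this gives $\det(xA+zB) = \on{N}(\alpha)^{-1}\on{N}(g) \cdot \on{disc}(\beta_1,\ldots,\beta_n;\on{Tr})$; by the change-of-basis formula combined with Birch--Merriman's identification $\on{disc}(R_F)=\on{disc}(F)$, the trace-form discriminant on the right equals $\on{N}(I)^2\cdot\on{disc}(F)$.

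The final step is to compute $\on{N}(g) = \prod_j (f_0 z\theta_j-x)/F'(\theta_j,1)$ explicitly: the numerator equals $(-1)^n F_{\mathsf{mon}}(x,z)$ by the factorization $F_{\mathsf{mon}}(x,z)=\prod_j(x-f_0 z\theta_j)$, while the denominator equals $(-1)^{\binom{n}{2}}\on{disc}(F)/f_0^{n-2}$ by the standard computation $\prod_j F'(\theta_j,1)=f_0^n\prod_{j\neq k}(\theta_j-\theta_k)$. Plugging these into the formula for $\det(xA+zB)$ and invoking the hypothesis $\on{N}(I)^2 = r\,\on{N}(\alpha)\,f_0^{-(n-2)}$ causes the factors of $\on{disc}(F)$, $\on{N}(\alpha)$, and $f_0^{n-2}$ to cancel, leaving a sign times $r\cdot F_{\mathsf{mon}}(x,z)$; the residual $(-1)^{n-\binom{n}{2}}$ combines with the $(-1)^{\lfloor n/2\rfloor}$ from the definition of $\on{inv}$ to produce the claimed identity.

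Although the trace-pairing identifications are cleanest over $\bar K$, the final equality is a polynomial identity between elements of $R[x,z]$, so no further descent is required: the entries of $A,B$ lie in $R$ by construction, and the identity verified over $K$ transfers immediately. The main technical obstacle will be careful bookkeeping of signs --- the Vandermonde inverse, the two products $\prod_j(f_0z\theta_j-x)$ and $\prod_{j\neq k}(\theta_j-\theta_k)$, and the $(-1)^{\lfloor n/2\rfloor}$ factor in $\on{inv}$ each contribute signs that must conspire correctly. A secondary point requiring care is to confirm that the normalization of $\on{N}(I)$ specified in \S2.1 is exactly compatible with the change-of-basis identity $\on{disc}(\beta_1,\ldots,\beta_n;\on{Tr}) = \on{N}(I)^2\cdot\on{disc}(R_F)$.
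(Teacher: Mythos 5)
Your approach is mathematically sound and genuinely different from the paper's. Where the paper first establishes the invertibility of $A$ by computing $\det A$ via the antitriangular structure of the Gram matrix of $\pi_{n-1}$ on $I_F^{n-1}$ (Lemma~\ref{lem-deta}), and then identifies the characteristic polynomial of $-A^{-1}B$ with that of $f_0\theta$ (Lemma~\ref{lem-starswitch}, which in turn cites Wood for the identity $\pi_{n-1}(\rho f_0\theta)=-\pi_{n-2}(\rho)$), you instead recognize $\pi_{n-1}$ and $\pi_{n-2}$ as explicit trace functionals, package $xA+zB$ as the Gram matrix of a single trace pairing $\on{Tr}_{K_F/K}(\alpha^{-1}g\,\beta\gamma)$ with $g=(f_0z\theta-x)/F'(\theta,1)$, and compute $\det(xA+zB)$ directly from $\on{N}(\alpha^{-1}g)$ and the trace-form discriminant. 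This is cleaner: the identity $\pi_{n-1}(\rho f_0\theta)=-\pi_{n-2}(\rho)$ falls out automatically from your trace formulas rather than needing a citation, and you produce the whole pencil at once instead of $\det A$ and the characteristic polynomial separately. The trace-formula identifications, the determinant identity $\det(\on{Tr}(\mu\beta_i\beta_j))=\on{N}(\mu)\det(\on{Tr}(\beta_i\beta_j))$, the change-of-basis relation, and the evaluations of the two products in $\on{N}(g)$ are all correct.

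However, the sign bookkeeping you defer does \emph{not} come out to $+1$. Your steps correctly produce $\det(xA+zB)=(-1)^{n-\binom{n}{2}}\,r\,F_{\mathsf{mon}}(x,z)$, so that $\on{inv}(xA+zB)=(-1)^{\lfloor n/2\rfloor+n-\binom{n}{2}}\,r\,F_{\mathsf{mon}}(x,z)$. Since $\lfloor n/2\rfloor-\binom{n}{2}$ is always even, the exponent reduces to $n\pmod 2$, and your derivation yields $\on{inv}(xA+zB)=(-1)^n\,r\,F_{\mathsf{mon}}(x,z)$; for odd $n$ this is $-r\,F_{\mathsf{mon}}$. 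A hand computation confirms it: with $n=3$, $f_0=1$, $F=x^3-2z^3$, $I=R_F$, $\alpha=r=1$, one finds $A=\left[\begin{smallmatrix}0&0&-1\\0&-1&0\\-1&0&0\end{smallmatrix}\right]$, $B=\left[\begin{smallmatrix}0&1&0\\1&0&0\\0&0&2\end{smallmatrix}\right]$, and $\det(xA+zB)=x^3-2z^3$, hence $\on{inv}(xA+zB)=-F_{\mathsf{mon}}(x,z)$. Notice that the paper's own Lemma~\ref{lem-deta} contains the same discrepancy: an antitriangular matrix whose $n$ antidiagonal entries each equal $-f_0^{-1}$ has determinant $(-1)^{\lfloor n/2\rfloor}(-f_0^{-1})^n=(-1)^{\lfloor n/2\rfloor+n}f_0^{-n}$, not the stated $(-1)^{\lfloor n/2\rfloor}f_0^{-n}$. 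So your proposal is exactly as correct as the paper's own argument, but you asserted the residual sign vanishes without checking it; you should instead flag the $(-1)^n$ explicitly and track where in the surrounding conventions (the definition of $\on{inv}$, or the sign in $\pi_{n-1}(\zeta_{n-1})=-1$) it is meant to be absorbed.
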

\begin{proof}
Observe that $F_{\mathsf{mon}}(x,1)$ is the characteristic polynomial of $f_0\theta \in K_F$. The idea of the proof is to show that the characteristic polynomial of the \emph{matrix} $-A^{-1}B$ is equal to the characteristic polynomial of the \emph{number} $f_0\theta$. We carry this strategy out through a pair of lemmas as follows.
\begin{lemma} \label{lem-deta}
We have that $\det A = (-1)^{\lfloor \frac{n}{2} \rfloor}  r$, so in particular, $A$ is invertible over $R$, and the characteristic polynomial of $-A^{-1}B$ is $\det(x \id - (-A^{-1}B)) = (-1)^{\lfloor \frac{n}{2} \rfloor}  r^{-1}  \det(x  A + B)$.
\end{lemma}
\begin{proof}[Proof of Lemma~\ref{lem-deta}]
To begin with, notice that $\det A$ is equal to the determinant of the bilinear map $\Sigma \colon I \times (\alpha^{-1}  I) \to R$ defined by $(\beta, \gamma) \mapsto \pi_{n-1}(\beta\gamma)$. Next, consider the bilinear form \mbox{$\Sigma' \colon I_F^{n-1} \times I_F^{n-1} \to K$} defined by $(\beta,\gamma)  \mapsto \pi_{n-1}(\beta\gamma)$. We claim that it suffices to show that $\det \Sigma' = (-1)^{\lfloor \frac{n}{2} \rfloor}  f_0^{-n}$. Indeed, note that $\Sigma$ is obtained from $\Sigma'$ via the following two steps:
\begin{itemize}[leftmargin=15pt,itemsep=0pt]
\item change the $K$-basis of the left-hand factor from the basis~\eqref{eq-idealdef} of $I_F^{n-1}$ to that of $I$; and
\item change the $K$-basis of the right-hand factor from the basis~\eqref{eq-idealdef} of $I_F^{n-1}$ to the basis of $\alpha^{-1}  I$ given by scaling each element of the chosen basis of $I$ by $\alpha^{-1}$.
\end{itemize}
Using the fact that $\Sigma$ and $\Sigma'$ are related by the above two changes-of-basis, we deduce that
\begin{align*}
\det \Sigma & = \det \Sigma' \cdot \frac{\on{N}(I)}{\on{N}(I_F^{n-1})}\cdot \frac{\on{N}(\alpha^{-1})  \on{N}(I)}{\on{N}(I_F^{n-1})} = \det \Sigma' \cdot r f_0^{n},
\end{align*}
where the last equality follows from the assumption that $\on{N}(I)^2 = r \on{N}(\alpha) \on{N}(I_F^{n-2})$.

That $\det \Sigma' = (-1)^{\lfloor \frac{n}{2} \rfloor}  f_0^{-n}$ is a consequence of the following claim: the matrix representing $\Sigma'$ with respect to the basis of $I_F^{n-1}$ given in~\eqref{eq-idealdef} (namely, $I_F^{n-1} = R\langle 1, \theta, \dots, \theta^{n-1}\rangle$) is lower antitriangular, with each antidiagonal entry equal to $-f_0^{-1}$. 
But this is easy: for $i,j \in \{0, \dots, n-2\}$, we have $\Sigma'(\theta^i, \theta^j) = \pi_{n-1}(\theta^{i+j})$, which equals $0$ when $i +j \leq n-2$ and $-f_0^{-1}$ when $i + j = n-1$.
\end{proof}
To show that the characteristic polynomials of $f_0 \theta$ and $-A^{-1}B$ are equal, it suffices to show that the actions of $f_0\theta$ and $-A^{-1}B$ by left-multiplication on $I$ are equal. It follows from Lemma~\ref{lem-deta} that the form $\pi_{n-1} \circ \langle -, -\rangle$ is nondegenerate, in the sense that the map $I \to \Hom_R(I,R)$ that sends $\beta\in I$ to the functional $\pi_{n-1}(\langle \beta, - \rangle)$ is an isomorphism. Thus, it suffices to prove the following:
\begin{lemma} \label{lem-starswitch}
Then we have that $\pi_{n-1}(\langle \beta , f_0\theta \cdot \gamma \rangle) = \pi_{n-1}(\langle\beta , -A^{-1}B \cdot  \gamma \rangle)$, where on the right-hand side we regard $\gamma$ as a column matrix with respect to the chosen basis of $I$.
\end{lemma}
\begin{proof}[Proof of Lemma~\ref{lem-starswitch}]
It suffices to prove $\pi_{n-1}(\langle \beta , f_0\theta \cdot \gamma \rangle) = -\pi_{n-2}(\langle \beta, \gamma \rangle)$, as
$$-\pi_{n-2}(\langle \beta, \gamma \rangle) = \beta \cdot -B \cdot \gamma = \beta \cdot -AA^{-1}B \cdot \gamma = \pi_{n-1}(\langle \beta , -A^{-1}B \cdot  \gamma \rangle),$$
where in the above, we regard $\beta$ as a row matrix and $\gamma$ as a column matrix with respect to the chosen basis of $I$. To prove this, it further suffices to prove the more general statement that if we extend $\pi_{n-1}$ and $\pi_{n-2}$ to maps $K_F \simeq I_F^{n-2} \otimes_R K \to K$, then $\pi_{n-1}(\rho  f_0 \theta) = -\pi_{n-2}(\rho)$ for any $\rho \in K_F$. By~\cite[Corollary 2.2]{MR3187931}, we have that $\pi_{n-1}(\rho  f_0 \theta) = -a$, where $a$ is the coefficient of $\zeta_{n-2}$ when $\rho  f_0$ is expressed in terms of the basis $\langle 1, \theta, \dots, \theta^{n-3}, \zeta_{n-2}, \zeta_{n-1}\rangle$. Then $a = f_0^{-1}  \pi_{n-2}(\rho  f_0) = \pi_{n-2}(\rho)$.
\end{proof}
This concludes the proof of Theorem~\ref{thm-theconstruction}.
\end{proof}

We say that two pairs $(I_1, \alpha_1)$ and $(I_2, \alpha_2)$, where $I_1,I_2$ are based fractional ideals of $R_F$ and $\alpha_1, \alpha_2 \in K_F^\times$ with $I_i^2 \subset \alpha_i  I_F^{n-2}$ and $\on{N}(I_i)^2 = r  \on{N}(\alpha_i)  \on{N}(I_F^{n-2})$ for each $i \in \{1, 2\}$, are \emph{equivalent} if there exists $\kappa \in K_F^\times$ such that the based ideals $I_1$ and $\kappa  I_2$ are equal up to a $G_n(R)$-change-of-basis and such that $\alpha_1 = \kappa^2  \alpha_2$. Evidently, if $(I_1, \alpha_1)$ and $(I_2, \alpha_2)$ are equivalent, then they give rise to the same $G_n(R)$-orbit on $R^2 \otimes_R \Sym_2 R^n$ via the above construction. Letting $H_{F,r}$ denote the set of equivalence classes of pairs $(I, \alpha)$ of the form~\eqref{eq-formreflater}, we have thus constructed a map of sets $$\mathsf{orb}_{F,r} \colon H_{F,r} \longrightarrow \frac{\big\{(A,B) \in R^2 \otimes_R \on{Sym}_2 R^n : \on{inv}(x  A + y  B) = r  F_{\mathsf{mon}}(x,y) \big\}}{G_n(R)}.$$
\begin{proposition} \label{prop-1to1}
The map $\mathsf{orb}_{F,r}$ is one-to-one.
\end{proposition}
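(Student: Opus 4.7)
The plan is to invert the construction directly. Suppose $(I_1,\alpha_1)$ and $(I_2,\alpha_2)$ map to the same $G_n(R)$-orbit. Choosing $g\in G_n(R)$ that carries $(A_1,B_1)$ to $(A_2,B_2)$ and replacing the basis of $I_1$ by its image under $g$, I may assume the output matrices coincide on the nose: $A_1=A_2=:A$ and $B_1=B_2=:B$. I then define the $R$-linear map $\phi\colon I_1\to I_2$ that sends the $i$-th basis vector of $I_1$ to the $i$-th basis vector of $I_2$, and aim to show that $\phi$ is multiplication by an explicit $\kappa\in K_F^\times$ witnessing the desired equivalence.

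To produce $\kappa$, I would argue as follows. Lemma~\ref{lem-deta} guarantees that $A$ is invertible over $R$, and Lemma~\ref{lem-starswitch} together with the nondegeneracy of $\pi_{n-1}\circ\langle-,-\rangle$ shows that multiplication by $f_0\theta$ on each $I_s$ is represented in its chosen basis by the same matrix $-A^{-1}B$. Hence $\phi$ is $R[f_0\theta]$-linear. Tensoring with $K$ yields a $K[f_0\theta]$-linear map $\phi_K\colon K_F\to K_F$; but $K[f_0\theta]=K[\theta]=K_F$ because $f_0\in K^\times$ and $F$ is separable of degree $n$, so $\phi_K$ is in fact $K_F$-linear and therefore given by multiplication by $\kappa\defeq\phi_K(1)\in K_F^\times$. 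Restricting $\phi_K$ back yields $\kappa\cdot I_1=I_2$ as based fractional ideals, since the bases match by construction.

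To match the $\alpha$'s, I would substitute $\beta_i^{(2)}=\kappa\beta_i^{(1)}$ into the two expressions $A_{ij}=\pi_{n-1}(\alpha_s^{-1}\beta_i^{(s)}\beta_j^{(s)})$ for $s=1,2$ and subtract, obtaining $\pi_{n-1}(\mu\cdot \beta_i^{(1)}\beta_j^{(1)})=0$ for all $i,j$, where $\mu\defeq\alpha_2^{-1}\kappa^2-\alpha_1^{-1}\in K_F$. Writing $1\in K_F$ as a $K$-combination of the $\beta_i^{(1)}$ shows that the products $\{\beta_i^{(1)}\beta_j^{(1)}\}$ span $K_F$ as a $K$-vector space, so $\pi_{n-1}(\mu\cdot K_F)=0$. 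By Lemma~\ref{lem-deta} the form $(x,y)\mapsto\pi_{n-1}(\alpha_1^{-1}xy)$ is nondegenerate on $K_F$; reparametrizing by the invertible map $x\mapsto\alpha_1 x$ transfers this to nondegeneracy of the bare pairing $(x,y)\mapsto\pi_{n-1}(xy)$, forcing $\mu=0$. Hence $\alpha_2=\kappa^2\alpha_1$, and setting $\kappa'=\kappa^{-1}$ exhibits the required equivalence $I_1=\kappa'\cdot I_2$ and $\alpha_1=(\kappa')^2\alpha_2$. The analogous argument with $B$ and $\pi_{n-2}$ is not needed once the $\pi_{n-1}$-pairing has been put to work.

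The main subtlety I anticipate is the step from $R[f_0\theta]$-linearity to full $R_F$-linearity. Since the basis elements $\zeta_i$ of $R_F$ involve division by powers of $f_0$, one cannot reconstruct the $R_F$-action directly from the $f_0\theta$-action over $R$. The resolution is that after tensoring with $K$ this distinction vanishes—a single $\kappa\in K_F^\times$ governs everything—and the required $R_F$-linearity of $\phi$ is recovered automatically as the restriction of the $K_F$-linear map $\phi_K$.
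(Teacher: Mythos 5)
Your proof is correct, and the underlying strategy is the same as the paper's: normalize the bases so the same pair $(A,B)$ represents both orbits, extract a scalar $\kappa\in K_F^\times$ from the coincidence of the $-A^{-1}B$ actions, and then use the nondegeneracy of the $\pi_{n-1}$-pairing to pin down $\alpha$.

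Where you differ is in execution, and in two respects the execution is a bit cleaner. First, for the $\kappa$-step, the paper asserts without elaboration that ``the map of multiplication by $f_0\theta$ determines the $R_F$-module structure on any fractional ideal of $R_F$''; you correctly identify that this is not automatic over $R$ (since the $\zeta_i$ need not lie in $R[f_0\theta]$) and patch it by tensoring with $K$, where $K[f_0\theta]=K[\theta]=K_F$, deducing that $\phi_K$ is multiplication by $\kappa=\phi_K(1)$. This is exactly the justification the paper elides. Second, for the $\alpha$-step, the paper reduces to $I_1=I_2$ and then invokes Wood's Proposition~2.5 (that post-composition with $\pi_{n-1}$ gives an isomorphism $\Hom_{R_F}(I\otimes_{R_F}I, I_F^{n-2})\xrightarrow{\sim}\Hom_R(I\otimes_{R_F}I, R)$); you instead observe directly that the products $\beta_i^{(1)}\beta_j^{(1)}$ span $K_F$ over $K$ and that the bare pairing $(x,y)\mapsto\pi_{n-1}(xy)$ is nondegenerate (which is already established in the proof of Lemma~\ref{lem-deta}), forcing $\mu=\alpha_2^{-1}\kappa^2-\alpha_1^{-1}=0$. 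This is an elementary, self-contained replacement for the citation. Both arguments are sound; yours buys a proof that relies only on facts established internally, at the cost of a slightly longer computation, while the paper's is shorter and leans on the framework it has already imported from Wood.
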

\begin{proof}
Let $(I_1, \alpha_1), (I_2, \alpha_2) \in H_{F,r}$ be such that $\mathsf{orb}_{F,r}(I_1, \alpha_1) = \mathsf{orb}_{F,r}(I_2, \alpha_2)$, and choose a representative $(A,B) \in R^2 \otimes_R \Sym_2 R^n$ of this common orbit. The proof of Theorem~\ref{thm-theconstruction} implies that for each $i$, we can replace the chosen basis of $I_i$ with a $G_n(R)$-translate such that the map of left-multiplication by $f_0 \theta$ on $I_i$ is given by the matrix $-A^{-1}B$ with respect to the translated basis. Having made these replacements, it follows that $I_1 \simeq I_2$ as based $R_F$-modules and hence as based fractional ideals of $R_F$, since multiplication by $f_0 \theta$ determines the $R_F$-module structure. Thus, there exists $\kappa \in K_F^\times$ such that $I_1 = \kappa  I_2$ and such that the chosen basis of $I_1$ is obtained by scaling each element in the chosen basis of $I_2$ by $\kappa$. By replacing $I_2$ with $\kappa  I_2$ and $\alpha_2$ with $\kappa^2  \alpha_2$, we may assume that $I_1 = I_2 = I$. Upon making these replacements, we obtain two elements $(I, \alpha_1), (I, \alpha_2) \in H_{F,r}$ such that, with respect to some chosen basis of $I$, both $\mathsf{orb}_{F,r}(I, \alpha_1)$ and $\mathsf{orb}_{F,r}(I, \alpha_2)$ are represented by the pair $(A,B) \in R^2 \otimes_R \Sym_2 R^n$.

It remains to show that $\alpha_1 = \alpha_2$. Let $\phi_1,\phi_2 \in \Hom_{R_F}(I \otimes_{R_F} I, I_F^{n-2})$ be defined by
$$\phi_i \colon I \otimes_{R_F} I \to I_F^{n-2}, \quad \phi_i(\beta \otimes \gamma) = \alpha_i^{-1}  \beta\gamma \quad \text{for each $i \in \{1, 2\}$}$$
Then it suffices to show that $\phi_1 = \phi_2$. By~\cite[Proposition~2.5]{MR3187931}, post-composition with $\pi_{n-1}$ yields an isomorphism $\Hom_{R_F}(I \otimes_{R_F} I, I_F^{n-2}) \overset{\sim}{\longrightarrow} \Hom_R(I \otimes_{R_F} I, R)$ of $R_F$-modules, so it further suffices to show that the images of $\phi_1$ and $\phi_2$ under this isomorphism are equal. But these images are both represented by $A$ with respect to the chosen basis of $I$, so they must be equal.
\end{proof}

The next theorem characterizes the orbits that lie in the image of $\mathsf{orb}_{F,r}$:

\begin{theorem} \label{thm-cutout}
Let $(A,B) \in R^2 \otimes_R \Sym_2 R^n$ be such that $\on{inv}(x  A + y  B) = r  F_{\mathsf{mon}}(x,y)$. Then the $G_n(R)$-orbit of $(A,B)$ lies in the image of the map $\mathsf{orb}_{F,r}$ if and only if 
\begin{equation} \label{eq-midthm}
    p_i\big(\tfrac{1}{f_0} \cdot -A^{-1}B\big) \in \on{Mat}_{n}(R) \quad \text{for each} \quad i \in \{1, \dots, n-1\}.
\end{equation}
\end{theorem}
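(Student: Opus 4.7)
The plan is to establish both implications of the biconditional.

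For the forward direction, suppose $(A,B)$ is a representative of $\mathsf{orb}_{F,r}(I,\alpha)$ for some $(I,\alpha) \in H_{F,r}$. Inspecting the proof of Theorem~\ref{thm-theconstruction}, after possibly replacing the chosen basis of $I$ with a $G_n(R)$-translate, the matrix $M := -A^{-1}B$ represents left-multiplication by $f_0\theta$ on $I$. Consequently $M/f_0$ represents left-multiplication by $\theta$, and the matrix $p_i(M/f_0)$ represents left-multiplication by $\zeta_i = p_i(\theta) \in R_F$. Since $I$ is an $R_F$-module and the chosen basis is an $R$-basis, this matrix has entries in $R$, proving~\eqref{eq-midthm}. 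The condition~\eqref{eq-midthm} is $G_n(R)$-invariant (as $M$ is conjugated by $g^{-T}$ under the action), so the choice of basis does not matter.

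For the reverse direction, I would reverse-engineer a pair $(I,\alpha)$ from $(A,B)$. Setting $z=0$ in $\on{inv}(xA+zB) = r F_{\mathsf{mon}}(x,z)$ yields $\det A = (-1)^{\lfloor n/2\rfloor}r \in R^\times$, so $M := -A^{-1}B$ is well-defined, and Lemma~\ref{lem-deta} shows that its characteristic polynomial is $F_{\mathsf{mon}}(x,1)$; a rearrangement of Cayley-Hamilton then gives $F(M/f_0,1)=0$. Hence $\theta \mapsto M/f_0$ extends to a well-defined $K$-algebra homomorphism $\iota \colon K_F \to \on{Mat}_n(K)$, and~\eqref{eq-midthm} says exactly that $\iota(R_F) \subset \on{Mat}_n(R)$, endowing $R^n$ with the structure of an $R_F$-module. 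The separability of $F$ forces $\iota(\theta)$ to be diagonalizable over $\ol{K}$ with distinct eigenvalues equal to the roots of $F(x,1)$, so its minimal polynomial agrees with its characteristic polynomial and $K^n$ is cyclic for $\iota(\theta)$, hence free of rank one over $K_F$. Fix an isomorphism $\phi \colon K^n \xrightarrow{\sim} K_F$ of $K_F$-modules; then $I := \phi(R^n) \subset K_F$ is an $R_F$-submodule with $R$-basis inherited from the standard basis of $R^n$, i.e., a based fractional ideal of $R_F$.

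Next I would produce $\alpha \in K_F^\times$. From $AM = -B$ and the symmetry of $B$, we deduce $AM = M^TA$ and hence $A \cdot M^k = (M^T)^k \cdot A$ for all $k \geq 0$. Since $\iota(\zeta_i) = p_i(M/f_0)$ is a polynomial in $M$, it follows that $A \cdot \iota(\zeta) = \iota(\zeta)^T \cdot A$ for every $\zeta \in R_F$, so the symmetric $R$-bilinear form on $I$ represented by $A$ is $R_F$-balanced. By~\cite[Proposition~2.5]{MR3187931} (the same isomorphism used in the proof of Proposition~\ref{prop-1to1}), $A$ then corresponds, via post-composition with $\pi_{n-1}$, to a symmetric map $\widetilde A \in \Hom_{R_F}(I \otimes_{R_F} I, I_F^{n-2})$. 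Tensoring with $K$ turns $\widetilde A$ into a $K_F$-linear symmetric map $K_F \otimes_{K_F} K_F \to K_F$, equivalently multiplication by a scalar $\alpha^{-1} \in K_F$; nondegeneracy of $A$ forces $\alpha^{-1} \in K_F^\times$ (a zero component in any factor of $K_F$ would produce a degenerate orthogonal summand), and the integrality of $\widetilde A$ forces $I^2 \subset \alpha \cdot I_F^{n-2}$.

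The remaining verifications that $(I,\alpha) \in H_{F,r}$ and that $\mathsf{orb}_{F,r}(I,\alpha)$ contains $(A,B)$ are now essentially bookkeeping. The norm condition $\on{N}(I)^2 = r \cdot \on{N}(\alpha) \cdot \on{N}(I_F^{n-2})$ is obtained by running the change-of-basis calculation in the proof of Lemma~\ref{lem-deta} in reverse, using $\det A = (-1)^{\lfloor n/2 \rfloor}r$. That $\pi_{n-1} \circ \langle -,-\rangle$ is represented by $A$ holds by the construction of $\alpha^{-1}$. That $\pi_{n-2} \circ \langle -,-\rangle$ is represented by $B$ follows from the identity $\pi_{n-2}(\rho) = -\pi_{n-1}(\rho \cdot f_0\theta)$ established in the proof of Lemma~\ref{lem-starswitch}: the $(i,j)$-entry equals $-\pi_{n-1}(\alpha^{-1} \beta_i \cdot M\beta_j) = -(AM)_{ij} = B_{ij}$. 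The main obstacle I anticipate is the careful bookkeeping in producing $\alpha$ from $A$, specifically confirming that the scalar coming from $\widetilde A \otimes_R K$ really does land in $K_F^\times$ and that the resulting $\alpha$ is simultaneously compatible with all of the integrality constraints arising from~\eqref{eq-midthm}.
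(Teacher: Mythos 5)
Your proposal is correct and follows essentially the same route as the paper: use \eqref{eq-midthm} to endow a free rank-$n$ $R$-module with an $R_F$-module structure, realize it as a based fractional ideal $I$, pass from the $R$-bilinear form $A$ to a map $\Phi \in \Hom_{R_F}(I \otimes_{R_F} I, I_F^{n-2})$ via \cite[Proposition~2.5]{MR3187931}, show that $\Phi$ is multiplication by a scalar $\alpha^{-1} \in K_F^\times$, and then read off the norm condition and the matrix of $\pi_{n-2}\circ\langle-,-\rangle$. The two small variations are purely local: you derive the scalar $\alpha^{-1}$ from the general fact that $K_F$-linear endomorphisms of the rank-one $K_F$-module $K_F$ are scalars (the paper does this by a direct hands-on computation with reference elements $\beta_0, \gamma_0$, as in~\eqref{eq-scalerel}), and you deduce $\alpha^{-1} \in K_F^\times$ directly from nondegeneracy of $A$ (the paper reaches the same conclusion via the norm identity $\on{N}(I)^2 \cdot \on{N}(\alpha') = r \cdot \on{N}(I_F^{n-2})$, as in~\eqref{eq-deducealphaprime}); both pairs of arguments are valid and interchangeable.
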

\begin{proof}
Let $(A,B)$ be as in the theorem statement. We first check that~\eqref{eq-midthm} is satisfied when the $G_n(R)$-orbit of $(A,B)$ is equal to $\mathsf{orb}_{F,r}(I,\alpha)$ for some $(I,\alpha) \in H_{F,r}$. In this case, it follows from the proof of Theorem~\ref{thm-theconstruction} that for each $i \in \{1, \dots, n-1\}$, the action of $\zeta_i \in R_F$ by left-multiplication on $I$ is given by $p_i\big(\tfrac{1}{f_0} \cdot -A^{-1}B\big)$ with respect to some basis of $I$. Since $I$ is an $R_F$-module, this action is integral, and so the matrix $p_i\big(\tfrac{1}{f_0} \cdot -A^{-1}B\big)$ must have entries in $R$.

Now suppose $(A,B)$ satisfies~\eqref{eq-midthm}. Let $I$ be a free $R$-module of rank $n$, and choose an $R$-basis of $I$. We endow the $R$-module $I$ with the structure of $R_{F_{\mathsf{mon}}}$-module by defining the action of $f_0\theta$ on $I$ to be left-multiplication by the matrix $-A^{-1}B$ with respect to the chosen basis (note that $R_{F_{\mathsf{mon}}}$ is the subring of $R_F$ generated over $R$ by the element $f_0\theta$). For each $i \in \{0, \dots, n-1\}$, let $\varphi_i \colon I \to I \otimes_{R_{F_{\mathsf{mon}}}} R_F$ be the map of $R_{F_{\mathsf{mon}}}$-modules that sends $\rho \mapsto \rho \otimes \zeta_i$. The conditions~\eqref{eq-midthm} imply that $\varphi_i(I) \subset I$ for each $i$, where we regard $I$ as an $R_{F_{\mathsf{mon}}}$-submodule of $I \otimes_{R_{F_{\mathsf{mon}}}} R_F$ via the map $\beta \mapsto \beta \otimes 1$. Thus, the structure of $R_{F_{\mathsf{mon}}}$-module on $I$ extends to a structure of $R_F$-module. By construction, the characteristic polynomial of the action of left-multiplication by $\theta$ on $I \otimes_{R_F} {K_F}$ is given by $f_0^{-1}F(x,1)$; thus, by~\cite[Proposition~5.4]{MR3187931}, the $R_F$-module $I$ is isomorphic to a fractional ideal of $R_F$. For convenience, we rescale $I$ so that $1 \in I$.

Having constructed $I$, we next construct a map $\Phi \colon I \otimes_{R_F} I \to I_F^{n-2}$ of $R_F$-modules. Consider the map $\Psi\colon I \otimes_R I \to R$ defined by $\beta \otimes \gamma \mapsto \beta \cdot A \cdot \gamma$, where $\beta$ and $\gamma$ are respectively viewed as row and column matrices with entries in $R$ with respect to the chosen basis of $I$. Then, under the identification $\Hom_R(I \otimes_R I,R) \simeq \Hom_R(I,\Hom_R(I,R))$, the map $\Psi$ corresponds to a map of $R$-modules $\Psi' \colon I \to \Hom_R(I,R)$. Observe that $\Hom_R(I,R)$ has a natural $R_F$-module structure, defined by $(\rho \cdot \varphi)(\beta) = \varphi(\rho \beta)$ for each $\rho \in R_F$, $\varphi \in \Hom_R(I,R)$, and $\beta \in I$. We now claim that $\Psi'$ is compatible with the actions of $R_F$ on $I$ and on $\Hom_R(I,R)$ and thus extends to a map of $R_F$-modules. Take $\beta,\gamma \in I$; to prove the claim, it suffices to show that 
\begin{equation} \label{eq-inter1}
(\Psi'(f_0\theta \beta))(\gamma) = (f_0\theta \cdot (\Psi'(\beta)))(\gamma),
\end{equation}
because the action of $f_0\theta$ determines the $R_F$-module structures of both $I$ and $\Hom_R(I,R)$. By definition, the left-hand side of~\eqref{eq-inter1} evaluates to
\begin{equation} \label{eq-inter2}
    (\Psi'(f_0\theta \beta))(\gamma) = (f_0\theta \beta) \cdot A \cdot \gamma = (\beta \cdot (-A^{-1}B)^T) \cdot A \cdot \gamma = -\beta \cdot B \cdot \gamma,
\end{equation}
and the right-hand side of~\eqref{eq-inter1} evaluates to
\begin{equation} \label{eq-inter3}
(f_0\theta \cdot (\Psi'(\beta)))(\gamma) = (\Psi'(\beta))(f_0\theta \gamma) = \beta \cdot A \cdot (-A^{-1}B \cdot \gamma) = - \beta \cdot B \cdot \gamma.
\end{equation}
Comparing~\eqref{eq-inter2} and~\eqref{eq-inter3} yields the claim. By~\cite[Proposition~2.5]{MR3187931}, post-composition with $\pi_{n-1}$ gives an isomorphism $\Hom_R(I,R) \overset{\sim}{\longrightarrow} \Hom_{R_F}(I, I_F^{n-2})$ of $R_F$-modules. Post-composing $\Psi'$ with this isomorphism yields a map of $R_F$-modules $\Psi'' \colon I \to \Hom_{R_F}(I, I_F^{n-2})$. Let $\Phi \colon I \otimes_{R_F} I \to I_F^{n-2}$ be the image of $\Psi''$ under the identification $\Hom_{R_F}(I, \Hom_{R_F}(I,I_F^{n-2})) \simeq \Hom_{R_F}(I \otimes_{R_F} I, I_F^{n-2})$.

By construction, the matrix representing the bilinear map $\pi_{n-1} \circ \Phi$ with respect to the chosen basis of $I$ is equal to $A$. As we showed in the proof of Lemma~\ref{lem-starswitch}, we have $\pi_{n-1}(\rho f_0\theta) = -\pi_{n-2}(\rho)$ for any $\rho \in K_F$. Thus, for any $\beta \otimes \gamma \in I \otimes_{R_F} I$, we have that
\begin{align*}
 (\pi_{n-2} \circ \Phi)(\beta \otimes \gamma) & = \pi_{n-2}(\Phi(\beta \otimes \gamma)) = -\pi_{n-1}(\Phi(\beta \otimes \gamma) f_0\theta) \\
& = -\pi_{n-1}(\Phi(\beta \otimes (f_0\theta  \gamma))) = -\pi_{n-1}(\Phi(\beta \otimes (-A^{-1}B \cdot \gamma))) \\
& = -(\pi_{n-1} \circ \Phi)(\beta \otimes (-A^{-1}B \cdot \gamma)) = -\beta \cdot A \cdot (-A^{-1}B \cdot \gamma) \\
& = \beta \cdot B \cdot \gamma,
\end{align*}
so $B$ represents the bilinear map $\pi_{n-2} \circ \Phi$ with respect to the chosen basis of $I$.

It remains to show that there is $\alpha \in K_F^\times$ such that $\Phi$ coincides with the bilinear form that sends $\beta \otimes \gamma \mapsto \alpha^{-1}  \beta\gamma$ (implying, in particular, that $I^2 \subset \alpha  I_F^{n-2}$), and such that $\on{N}(I)^2 = r  \on{N}(\alpha)  \on{N}(I_F^{n-2})$. To do this, fix elements $\beta_0, \gamma_0 \in I \cap K_F^\times$. For any $\beta, \gamma \in I$, let $\beta', \gamma' \in R_F \cap K_F^\times$ and $\beta'', \gamma'' \in R_F$ be such that $\beta  \beta' = \beta_0  \beta'' \in R_F$ and $\gamma  \gamma' = \gamma_0  \gamma'' \in R_F$. Then we have that
\begin{equation} \label{eq-scalerel}
\Phi(\beta \otimes \gamma) = \frac{\Phi((\beta  \beta') \otimes (\gamma  \gamma'))}{\beta'  \gamma'}  = \frac{\beta''  \gamma''}{\beta'  \gamma'} \cdot \Phi(\beta_0 \otimes \gamma_0) = (\beta  \gamma) \cdot \frac{\Phi(\beta_0 \otimes \gamma_0)}{\beta_0  \gamma_0}.
\end{equation}
Let $\alpha' = \Phi(\beta_0 \otimes \gamma_0)/(\beta_0  \gamma_0) \in K_F$. Then by~\eqref{eq-scalerel}, we have that $\Phi(\beta \otimes \gamma) = (\beta  \gamma)  \alpha'$, so since $A$ represents the map $\pi_{n-1} \circ \Phi$ with respect to the chosen basis of $I$, it follows that $\det A$ is equal to the determinant of the bilinear map $\Sigma \colon I \times (\alpha'  I) \to R$ defined by $(\beta, \gamma) \mapsto \pi_{n-1}(\beta\gamma)$. Using the fact that $\det A = (-1)^{\lfloor \frac{n}{2}\rfloor}  r$ (because $\on{inv}(A) = r  F_{\mathsf{mon}}(1,0)$ by assumption) together with the argument in the proof of Lemma~\ref{lem-deta}, we find that
\begin{equation} \label{eq-deducealphaprime}
(-1)^{\lfloor \frac{n}{2}\rfloor}  r = \det A = \det \Sigma = (-1)^{\lfloor \frac{n}{2}\rfloor}  \frac{\on{N}(\alpha')  \on{N}(I)^2}{\on{N}(I_F^{n-2})}.
\end{equation}
It follows from~\eqref{eq-deducealphaprime} that $\on{N}(I)^2  \on{N}(\alpha') = r  \on{N}(I_F^{n-2})$, so since $\on{N}(I),\, \on{N}(I_F^{n-2}) \in K^\times$ (as $I$ has rank $n$ over $R$ and $f_0 \neq 0$), we deduce that $\alpha' \in K_F^\times$, and the desired value $\alpha \in K_F^\times$ is simply $\alpha = {\alpha'}^{-1}$.
\end{proof}
\begin{remark}
 Theorem~\ref{thm-cutout} shows that the locus of pairs $(A,B) \in R^2 \otimes_R \Sym_2 R^n$ that arise via the above construction from binary forms in $\mc{F}_n(f_0,R)$ is $G_n(R)$-invariant and cut out by congruence conditions modulo $f_0^{n-1}$. In~\cite[Theorem~11]{BSSpreprint}, we prove that these conditions amount to stipulating that \mbox{$B \pmod{f_0}$} is of rank $\leq 1$ when $R_F$ is maximal, and that \mbox{$\wedge^i B \equiv 0 \pmod{f_0^{i-1}}$} for each $i \in \{2, \dots, n\}$ when $F$ is primitive. These simpler conditions play a crucial role in~\cite{BSSpreprint} (as well as the forthcoming work mentioned in \S\ref{sec-seljacappexp} and at the end of \S\ref{sec-earlybird}), where we use them to prove that the pairs $(A,B)$ arising from forms in $\mc{F}_n(f_0, R)$ are, to some extent, equidistributed modulo $f_0$. This equidistribution result allows us to overcome the dependency of the parametrization on the leading coefficient.
\end{remark}

We now determine the stabilizers of orbits arising from the above construction:

\begin{proposition} \label{prop-stabs}
    Let $(I,\alpha) \in H_{F,r}$, and let $\on{End}_{R_F}(I)$ be the ring of $R_F$-module maps $I \to I$. The stabilizer in $G_n(R)$ of any representative of $\mathsf{orb}_{F,r}(I,\alpha)$ is isomorphic to the finite abelian group $\on{End}_{R_F}(I)^\times[2]_{\on{N}\equiv1} \defeq \{\rho \in \on{End}_{R_F}(I)^\times[2] : \on{N}(\rho) = 1\}$ if $n$ is odd and $\on{End}_{R_F}(I)^\times[2]$ if $n$ is even.
\end{proposition}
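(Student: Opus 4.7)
The plan is to exhibit an explicit isomorphism between the $G_n(R)$-stabilizer of a representative $(A,B)$ of $\mathsf{orb}_{F,r}(I,\alpha)$ and the group stated in the proposition; since distinct representatives have conjugate, hence isomorphic, stabilizers, fixing a representative suffices. Fix a basis of $I$ and form $(A,B)$ as in \S\ref{sec-bigconstruct}, and let $M \defeq -A^{-1}B$, which by the proof of Theorem~\ref{thm-theconstruction} is the matrix of left-multiplication by $f_0\theta$ on $I$. For $\rho \in K_F^\times$ preserving $I$, write $[\rho] \in \GL_n(R)$ for the matrix of multiplication by $\rho$ in the chosen basis.

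The first step is to show that any stabilizer $g$ arises from an $R_F$-module endomorphism. Substituting $B = -AM$ and using $gAg^T = A$ (equivalently, $gA = Ag^{-T}$), the equation $gBg^T = B$ collapses to $Mg^T = g^TM$; so $g^T$ commutes with $M$. Because Theorem~\ref{thm-cutout} guarantees that each matrix $[\zeta_i] = p_i\bigl(\tfrac{1}{f_0}\cdot M\bigr)$ is a polynomial in $M$ with coefficients in $R$, it follows that $g^T$ commutes with the entire $R_F$-action on $I$, so $g^T \in \on{End}_{R_F}(I)$. Using $\on{End}_{R_F}(I) \otimes_R K \simeq K_F$ (which holds since $I \otimes_R K$ is a free rank-one $K_F$-module), we may write $g^T = [\rho]$ for a unique $\rho \in \on{End}_{R_F}(I)^\times$.

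The second step is to pin down the $2$-torsion condition on $\rho$. The bilinear form $(\beta,\gamma) \mapsto \pi_{n-1}(\alpha^{-1}\beta\gamma)$ represented by $A$ is manifestly $R_F$-balanced, i.e.\ invariant under moving $\rho \in \on{End}_{R_F}(I)$ from one factor to the other; in matrix form, this is the self-adjointness identity $[\rho]^T A = A[\rho]$. Computing
\[
gAg^T = [\rho]^T A [\rho] = A [\rho][\rho] = A[\rho^2],
\]
the stabilizing condition $gAg^T = A$ is equivalent to $\rho^2 = 1$, so $\rho \in \on{End}_{R_F}(I)^\times[2]$. A reverse computation confirms that every $\rho \in \on{End}_{R_F}(I)^\times[2]$ produces a $\GL_n(R)$-stabilizer $g = [\rho]^T$ of both $A$ and $B$, and by commutativity of $\on{End}_{R_F}(I) \subseteq K_F$, the assignment $\rho \mapsto [\rho]^T$ is a group isomorphism onto $\{g \in \GL_n(R) : g \cdot (A,B) = (A,B)\}$.

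Finally, intersecting with $G_n(R)$ imposes the determinant condition: $\det [\rho]^T = \on{N}(\rho)$, so when $n$ is odd we extract the subgroup on which $\on{N}(\rho) = 1$, while for $n$ even the relation $\on{N}(\rho)^2 = \on{N}(\rho^2) = 1$ forces $\on{N}(\rho) \in \{\pm 1\}$ automatically, matching $G_n = \on{SL}_n^\pm$. Finiteness of $\on{End}_{R_F}(I)^\times[2] \subseteq K_F^\times[2]$ is immediate from $K_F$ being \'etale over $K$, and abelianness is inherited from $K_F^\times$. The main obstacle I anticipate is keeping the conventions straight in the first step: it is $g^T$ rather than $g$ that directly becomes an $R_F$-module endomorphism, and reconciling this transpose with the Bhargava-style twisted action $g \cdot (A,B) = (gAg^T, gBg^T)$ via the self-adjointness identity $[\rho]^T A = A[\rho]$ is the key linear-algebraic move that makes the whole calculation line up.
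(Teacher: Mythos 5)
Your proof is correct and takes essentially the same route as the paper: confine the stabilizer to the commutative centralizer of $M = -A^{-1}B$ (the paper does this via rational canonical form, you do it by noting that every $[\zeta_i]$ is a polynomial in $M$, so commuting with $M$ already forces commuting with the whole $R_F$-action), identify the relevant elements with $\on{End}_{R_F}(I)^\times$ via multiplication operators, and use the $R_F$-balancedness of the forms to force $\rho^2 = 1$ before cutting down by the determinant condition. One small point where you are in fact more careful than the paper: with the twisted action $g\cdot(A,B) = (gAg^T,gBg^T)$, the conditions $gAg^T = A$ and $gBg^T = B$ yield $g^T M g^{-T} = M$, so it is $g^T$ rather than $g$ that commutes with $M$ and equals $[\rho]$, whereas the paper's centralizer $Z$ is stated in terms of $g$; this is harmless since $K[M]$ is commutative and $\rho\mapsto[\rho]^T$ is still a group isomorphism onto the stabilizer, but your explicit tracking of the transpose is the accurate bookkeeping.
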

\begin{proof}
Let $(A,B) \in R^2 \otimes_R \Sym_2 R^n$ be a representative of $\mathsf{orb}_{F,r}(I,\alpha)$, and let $\on{Stab}_{G_n(R)}(A,B)$ denote the stabilizer of $(A,B)$ in $G_n(R)$. Then observe that
$$\on{Stab}_{G_n(R)}(A,B) \subset Z \defeq \{g \in \on{GL}_n(K) : g \cdot -A^{-1}B \cdot g^{-1} = -A^{-1}B\}.$$
But as we computed in the proof of Theorem~\ref{thm-theconstruction}, $-A^{-1}B$ has separable characteristic polynomial $F_{\mathsf{mon}}(x,1)$, so upon examining the rational canonical form of $-A^{-1}B$, we find that $Z = K[-A^{-1}B] \simeq K[f_0\theta] = K_F$. Thus, if $g = \on{Stab}_{G_n(R)}(A,B)$, then $g$ is the matrix $g = m_\rho$ of multiplication by $\rho$ for some $\rho \in K_F^\times$. Since $g$ defines an invertible map of $R_F$-modules $g \colon I \to I$, we have $m_\rho \in \on{End}_{R_F}(I)^\times$. Moreover, the bilinear maps $I \times I \to K_F$ defined by sending $(\beta,\gamma)$ to $\alpha^{-1}  \beta\gamma$ and $\alpha^{-1}  (\rho  \beta)(\rho  \gamma)$ must coincide, implying that $\rho^2 = 1$, and hence that $m_\rho \in \on{End}_{R_F}(I)^\times[2]$.

Conversely, by imitating the argument in~\eqref{eq-scalerel}, we see that every $R_F$-module endomorphism of $I$ is given by multiplication by an element of $K_F$. In particular, we have $\on{End}_{R_F}(I)^\times[2] \subset K_F^\times[2]$, and it is clear that the action of $K_F^\times[2]$ stabilizes $(A,B)$. Finally, the stabilizer of $(A,B)$ in $\on{SL}_n(R)$ is the determinant-$1$ subgroup of $\on{End}_{R_F}(I)^\times[2]$, which is simply $\on{End}_{R_F}(I)^\times[2]_{\on{N}\equiv 1}$.
\end{proof}

\begin{remark}
Note that $\on{End}_{R_F}(I) \supset R_F$ and that this containment is an equality when $I$ is invertible.
\end{remark}

We now show that the construction is compatible with the action of $N(R)$ on $F \in \mc{F}_n(f_0,R)$:

\begin{lemma} \label{lem-functorial}
Let $h \in N(R)$, and let $F' = h \cdot F$. Consider the diagram
\begin{equation} \label{eq-functorialdiagonal}
\begin{tikzcd} 
H_{F,r} \arrow{r}{\mathsf{orb}_{F,r}} \arrow[swap]{d}{\rotatebox{90}{$\sim$}} & G_n(R) \backslash (R^2 \otimes_R \Sym_2 R^n) \arrow{d}{\rotatebox{90}{$\sim$}} \\
H_{F',r} \arrow[swap]{r}{\mathsf{orb}_{F',r}} & G_n(R) \backslash (R^2 \otimes_R \Sym_2 R^n)
\end{tikzcd}
\end{equation}
where the vertical maps are as follows: given $(I,\alpha) \in H_{F,r}$, its image is $(I,\alpha) \in H_{F',r}$, and given $(A,B) \in R^2 \otimes_R \Sym_2 R^n$, its image is $(A, f_0h_{21}  A + B) \in R^2 \otimes_R \Sym_2 R^n$, where $h_{21}$ denotes the row-$2$, column-$1$ entry of $h$. Then the diagram~\eqref{eq-functorialdiagonal} commutes.
\end{lemma}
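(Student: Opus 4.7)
The plan is to trace an element $(I, \alpha) \in H_{F, r}$ through both paths of the diagram using the same chosen basis of $I$ in both computations, and then verify that the outputs agree on the nose. Writing $s \defeq h_{21}$, so that $F'(x, z) = F(x + s z, z)$, I would first verify well-definedness of the two vertical maps in~\eqref{eq-functorialdiagonal}. For the left-hand vertical, applying~\eqref{eq-idealident} with $\gamma = h \in M_1(R)$ yields $\phi_{k, h} = \on{id}$ for every $k$, so $R_F = R_{F'}$ and $I_F^{n-2} = I_{F'}^{n-2}$ as subsets of the common fraction field $K_F = K_{F'}$. Together with $\on{N}(I_F^{n-2}) = f_0^{-(n-2)} = \on{N}(I_{F'}^{n-2})$, this shows that the conditions cutting out $H_{F, r}$ transport unchanged to those cutting out $H_{F', r}$. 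For the right-hand vertical, I would use the identity $F'_{\mathsf{mon}}(x, z) = F_{\mathsf{mon}}(x + f_0 s \cdot z, z)$ (which follows from $F_{\mathsf{mon}}(x, z) = \tfrac{1}{f_0} F(x, f_0 z)$) to compute
\[
\on{inv}\bigl(x \cdot A + z \cdot (f_0 s \cdot A + B)\bigr) = \on{inv}\bigl((x + f_0 s \cdot z) A + z B\bigr) = r \cdot F_{\mathsf{mon}}(x + f_0 s \cdot z, z) = r \cdot F'_{\mathsf{mon}}(x, z).
\]

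With well-definedness in hand, let $(A, B) \defeq \mathsf{orb}_{F, r}(I, \alpha)$ and $(A', B') \defeq \mathsf{orb}_{F', r}(I, \alpha)$ both be computed with the same chosen basis of $I$. Since the symmetric $K_F$-valued bilinear form $\langle -, - \rangle$ on $I$ given by~\eqref{eq-defbilin} is identical in the two computations, the commutativity of~\eqref{eq-functorialdiagonal} reduces to the pair of functional identities $\pi'_{n-1} = \pi_{n-1}$ and $\pi'_{n-2} = \pi_{n-2} + f_0 s \cdot \pi_{n-1}$ on $I_F^{n-2} = I_{F'}^{n-2}$, where $\pi'_{n-1}$ and $\pi'_{n-2}$ are the analogues of $\pi_{n-1}$ and $\pi_{n-2}$ defined relative to the $F'$-basis of $I_{F'}^{n-2}$.

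The plan to establish these identities is a direct coordinate computation. Each $\rho \in I_F^{n-2}$ can be written uniquely as a polynomial of degree at most $n - 1$ in $\theta$, or equivalently in $\theta' = \theta - s$; the substitution $\theta = \theta' + s$ together with the binomial theorem relates the two sets of coefficients, leaving the top-degree coefficient invariant and shifting the next-to-top coefficient by $(n - 1) s$ times the top one. Using $\zeta_{n-1} = \sum_{j = 0}^{n-2} f_j \theta^{n - 1 - j}$, one expresses $\pi_{n-1}(\rho)$ and $\pi_{n-2}(\rho)$ as explicit $R$-linear combinations of the $\theta^{n-1}$- and $\theta^{n-2}$-coefficients of $\rho$ with $1/f_0$ and $f_1/f_0$ prefactors. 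The analogous formulas for $\pi'_{n-1}$ and $\pi'_{n-2}$ have $f_1$ replaced by $f'_1 = f_1 + n f_0 s$, the $x^{n-1} z$-coefficient of $F'(x, z) = F(x + s z, z)$. Substituting and simplifying produces the cancellation $(n - 1) s + (f_1 - f'_1)/f_0 = -s$, which yields both $\pi'_{n-1} = \pi_{n-1}$ and $\pi'_{n-2} = \pi_{n-2} + f_0 s \cdot \pi_{n-1}$.

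The main obstacle is purely bookkeeping. The sign conventions in the definitions of $\pi_{n-1}$ and $\pi_{n-2}$, the $1/f_0$ prefactors arising from the leading coefficient of $\zeta_{n-1}$, and the interaction between the binomial shift $\theta \mapsto \theta' + s$ and the transformation $f_1 \mapsto f_1 + n f_0 s$ must all combine precisely to produce the factor $f_0 s$ appearing in the right-hand vertical map. No deeper conceptual difficulty arises beyond this coordinate computation.
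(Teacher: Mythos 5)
Your proposal is correct and takes essentially the same approach as the paper. The paper simply asserts that the Gram matrices $(A,B)$, $(A',B')$ are related by $(A',B')=(A, f_0 h_{21}\cdot A+B)$ ``by inspection,'' whereas you carry out that inspection, correctly reducing it to the identities $\pi'_{n-1}=\pi_{n-1}$ and $\pi'_{n-2}=\pi_{n-2}+f_0 s\cdot\pi_{n-1}$ and verifying them via $\theta=\theta'+s$ and $f_1'=f_1+nf_0 s$.
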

\begin{proof}
Let $(I,\alpha) \in H_{F,r}$. Then since $K_F \simeq K_{F'}$ and since this bijection restricts to a bijection $I_F \simeq I_{F'}$ (see~\eqref{eq-idealident}), we may regard $I$ as a fractional ideal of $R_{F'}$ and $\alpha$ as an element of $K_{F'}$. As in~\eqref{eq-defbilin}, the pairs $(I,\alpha) \in H_{F,r}$ and $(I,\alpha) \in H_{F',r}$ give rise to two symmetric bilinear forms $\langle - , - \rangle \colon I \times I \to I_F^{n-2}$ and $\langle -, - \rangle' \colon I \times I \to I_{F'}^{n-2}$. One verifies by inspection that the pairs of matrices $(A,B),\,(A',B')$ arising from $\langle-,-\rangle,\,\langle-,-\rangle'$ satisfy $(A',B') = (A, f_0h_{21}  A + B)$.
\end{proof}

\begin{remark}
The results of this section are inspired by~\cite{MR3187931}, where Wood proves similar theorems in which the multiplication table of an ideal of $R_F$ gives rise to a pair $(A,B) \in R^2 \otimes_R \on{Sym}_2 R^n$ with $\det(x  A + y  B) = F(x,y)$, rather than $F_{\mathsf{mon}}(x,y)$.
\end{remark}

\subsection{Preliminaries on counting $2$-torsion classes} \label{sec-diffsqs}

Let \mbox{$r \in R^\times$,} and let $(I, \alpha) \in H_{F,r}$. If $I$ is invertible (equivalently, projective as an $R_F$-module), then the condition that $\on{N}(I)^2 = r  \on{N}(\alpha)  \on{N}(I_F^{n-2})$ forces the containment $I^2 \subset \alpha  I_F^{n-2}$ to be an equality. Let $H_{F,r}^* \subset H_{F,r}$ be the subset of equivalence classes of pairs $(I,\alpha)$ such that $I$ is invertible. We say an orbit in $\mathsf{orb}_{F,r}(H_{F,r})$ is \emph{pre-projective} if it belongs to the subset $\mathsf{orb}_{F,r}(H_{F,r}^*)$. We list some properties of pre-projective orbits below:
\begin{itemize}[leftmargin=15pt,itemsep=0pt]
\item When $R = \BZ$, a fractional ideal $I$ of $R_F$ is invertible if and only if $\BZ_p \otimes_\BZ I$ is invertible as a fractional ideal of $ \BZ_p \otimes_\BZ R_F$ for every prime $p$.
\item When $R = \BZ_p$,~\cite[proof of Theorem~5.1]{MR3782066} implies the locus of pre-projective elements of $\BZ_p^2 \otimes_{\BZ_p} \Sym_2 \BZ_p^n$ is defined by congruence conditions modulo a power $p^{m_p}$, where $m_p = 1$ if $p \nmid f_0$.
\item If $F' = h \cdot F$ for some $h \in N(R)$, then the bijection $H_{F,r} \to H_{F',r}$ defined in Lemma~\ref{lem-functorial} restricts to a bijection $H_{F,r}^* \to H_{F',r}^*$.
\end{itemize}

Let $R = \BZ$, let $F$ be irreducible, and suppose that $R_F$ is the maximal order in the number field $K_F$. For a fractional ideal $I$ of $R_F$, we denote its ideal class by $[I] \in \on{Cl}(R_F)$. The set of square roots of $[I_F^{n-2}]$ is a torsor for $\on{Cl}(R_F)[2]$, and by Hecke's theorem, this torsor is nonempty. Thus, the problem of counting elements of $\on{Cl}(R_F)[2]$ is equivalent to that of counting square roots of $[I_F^{n-2}]$. 

Let $H_{F,1}^{*,+} \subset H_{F,1}^*$ be the subset of elements $(I,\alpha)$ such that $\alpha$ is a totally positive element of $K_F$. In \S\S\ref{sec-case1}--\ref{sec-evenalwaysworks}, we use the new parametrization to express the number of $2$-torsion elements in the (ordinary or narrow) class group of $R_F$ in terms of the quantities 
\begin{equation} \label{eq-needsave}
\#\mathsf{orb}_{F, 1}(H_{F, 1}^*), \quad \#\mathsf{orb}_{F, -1}(H_{F, -1}^*), \quad \text{and} \quad \#\mathsf{orb}_{F,1}(H_{F,1}^{*,+}).
\end{equation}
In this way, we reduce the problem of determining the average size of the $2$-class groups of rings defined by binary forms into the problem of counting pre-projective orbits that arise from the parametrization.

\subsubsection{Case 1: $R = \BZ$ and $n$ is odd} \label{sec-case1}

In this case, we obtain the following formula for $\#\on{Cl}(R_F)[2]$:
\begin{lemma} \label{lem-classtoh}
Let $n$ be odd, and let $F \in \mc{F}_{n,\max}^{r_1,r_2}(f_0,\BZ)$. Then we have that
\begin{equation*} 
\#\on{Cl}(R_F)[2] = \frac{\#\mathsf{orb}_{F,1}(H_{F,1}^*)}{2^{r_1 + r_2 - 1}}.
\end{equation*}
\end{lemma}
\begin{proof}
Since $I_F^{n-2}$ represents the class of the inverse different in $\on{Cl}(R_F)$, every invertible based fractional ideal $I$ representing a square root of the class of the inverse different satisfies $I^2 = \alpha  I_F^{n-2}$ for some element $\alpha \in K_F^\times$ (depending on $I$). In particular, by taking norms on both sides, we find that there exists $r \in \{\pm 1\}$ such that $\on{N}(I)^2 = r  \on{N}(\alpha)  \on{N}(I_F^{n-2})$. By making the replacement $\alpha \mapsto r  \alpha$, we can ensure that $r = 1$. Thus, we obtain a surjective map of sets
\begin{equation} \label{eq-firstsurge}
   H_{F,1}^* \twoheadrightarrow \left\{ \text{square roots of }[I_F^{n-2}]\right\}, \quad (I, \alpha) \mapsto [I].
\end{equation}

Suppose $H_{F,1}^* \neq \varnothing$ (i.e., $[I_F^{n-2}]$ is a square in $\on{Cl}(R_F)$). If $(I_1, \alpha_1), (I_2, \alpha_2) \in H_{F,1}^*$ map to the same class under the map~\eqref{eq-firstsurge}, we can replace $(I_2, \alpha_2)$ with a representative of its equivalence class such that $I_1 = I_2$ as based fractional ideals. Letting $I = I_1 = I_2$, we find that 
$$\alpha_1  I_F^{n-2} = I^2 = \alpha_2  I_F^{n-2} \quad \text{and} \quad \on{N}(\alpha_1)  \on{N}(I_F^{n-2}) = \on{N}(I)^2 = \on{N}(\alpha_2)  \on{N}(I_F^{n-2}),$$
implying that $\alpha_1/\alpha_2 \in (R_F^\times)_{\on{N}\equiv 1} \defeq \{\rho \in R_F^\times : \on{N}(\rho) = 1\}$. Moreover, if $(I, \alpha) \in H_{F,r}$ for any $r \in R^\times$, then $(I, \rho^2  \alpha) \sim (\rho^{-1}  I, \alpha) = (I, \alpha)$ for any $\rho \in (R_F^\times)_{\on{N}\equiv 1}$. Thus, the preimage of any square root of $[I_F^{n-2}]$ under~\eqref{eq-firstsurge} is in bijection with \mbox{$(R_F^\times/R_F^{\times 2})_{\on{N}\equiv 1} \defeq \{ \rho \in R_F^\times/R_F^{\times 2} : \on{N}(\rho) = 1 \in \BZ^\times/\BZ^{\times2}\}$, and so,}
\begin{equation} \label{eq-difftoh}
\#\left\{\text{square roots of }[I_F^{n-2}]\right\} = \frac{\#H_{F,1}^*}{\#(R_F^\times/R_F^{\times 2})_{\on{N} \equiv 1}} = \frac{\#\mathsf{orb}_{F,1}(H_{F,1}^*)}{\#(R_F^\times/R_F^{\times 2})_{\on{N} \equiv 1}}.
\end{equation}
It is not hard to compute the denominator of the right-hand side of~\eqref{eq-difftoh}. Indeed, if $F$ has $r_1$ real roots and $r_2$ pairs of complex conjugate roots, Dirichlet's Unit Theorem tells us that we have $R_F^\times \simeq \Gamma \times \BZ^{r_1 + r_2 - 1}$ where $\Gamma$ is a finite abelian group. Because $-1 \in R_F^\times$, and because $\on{N}(-1) = -1$ as $n$ is odd, we can choose the generators of the free part of $R_F^\times$ to all have norm $1$. Since $\#(\Gamma/\Gamma^2) = \#\Gamma[2] = \#\{\pm 1\} = 2$, and since the norm map $\Gamma/\Gamma^2 \to \{\pm 1\}$ is surjective, we have that $\#(\Gamma/\Gamma^2)_{\on{N}\equiv1} = 1$. Thus,
\begin{equation} \label{eq-unitsize}
\#(R_F^\times/R_F^{\times 2})_{\on{N} \equiv 1} = 2^{r_1 + r_2 - 1}.
\end{equation}
Combining~\eqref{eq-difftoh} and~\eqref{eq-unitsize} yields the lemma.
\end{proof}

\subsubsection{Case 2: $R = \BZ$ and $n$ is even} \label{sec-evenalwaysworks}

In this case, we obtain the following formula for $\#\on{Cl}(R_F)[2]$:
\begin{lemma}  \label{lem-classtohevenlem}
Let $n$ be even, and let $F \in \mc{F}_{n,\max}^{r_1,r_2}(f_0,\BZ)$. Then we have that
\begin{equation*}
\#\on{Cl}(R_F)[2] = \frac{\#\mathsf{orb}_{F,1}(H_{F,1}^*) + \#\mathsf{orb}_{F,-1}(H_{F,-1}^*)}{2^{r_1 + r_2}}.
\end{equation*}
\end{lemma}
\begin{proof}
Just as in the proof of Lemma~\ref{lem-classtoh}, if $I$ is an invertible based fractional ideal representing a square root of the class of the inverse different, then $I^2 = \alpha  I_F^{n-2}$ for some $\alpha \in K_F^\times$, and there exists $r \in \{\pm 1\}$ such that $\on{N}(I)^2 = r  \on{N}(\alpha)  \on{N}(I_F^{n-2})$. But when $n$ is even, we might \emph{not necessarily} be able to replace $\alpha$ by a suitable multiple to reduce to the case where $r = 1$.

First suppose that there does \emph{not} exist $\rho \in R_F^\times$ such that $\on{N}(\rho) = -1$. Then the images of $H_{F,1}^*$ and of $H_{F,-1}^*$ under the map that sends a pair $(I,\alpha)$ to $[I] \in \left\{\text{square roots of }[I_F^{n-2}]\right\}$ must be disjoint: indeed, if there is a based fractional ideal $I$ of $R_F$ such that $(I, \alpha_1) \in H_{F,1}^*$ and $(I,\alpha_2) \in H_{F,-1}^*$, then $\alpha_1  I_F^{n-2} = I^2 = \alpha_2  I_F^{n-2}$, so $\alpha_1/\alpha_2 \in R_F^\times$ but $\on{N}(\alpha_1/\alpha_2) = -1$, which is a contradiction. We thus obtain a surjective map of sets
\begin{equation} \label{eq-firstsurgeeven0}
    H_{F,1}^* \sqcup H_{F,-1}^* \twoheadrightarrow  \left\{ \text{square roots of }[I_F^{n-2}]\right\}, \quad (I,\alpha) \mapsto [I]
\end{equation}
such that each fiber of the map~\eqref{eq-firstsurgeeven0} is entirely contained in precisely one of $H_{F,1}^*$ or $H_{F,-1}^*$. By an argument similar to that given in the proof of Lemma~\ref{lem-classtoh}, one checks that each of the fibers of the map~\eqref{eq-firstsurgeeven0} is in bijection with $(R_F^\times/R_F^{\times 2})_{\on{N}\equiv 1}$ and hence has size $\#(R_F^\times/R_F^{\times 2})_{\on{N}\equiv1}$. Since we assumed that $R_F^\times$ has no elements of negative norm, $(R_F^\times/R_F^{\times 2})_{\on{N}\equiv1} = R_F^\times/R_F^{\times 2}$, and Dirichlet's Unit Theorem implies that $\#(R_F^\times/R_F^{\times 2}) = 2^{r_1 + r_2}$, where $F$ has $r_1$ real roots and $r_2$ pairs of complex conjugate roots. Thus,
\begin{equation} \label{eq-classtoheven2}
\#\left\{\text{square roots of }[I_F^{n-2}]\right\}  = \frac{\#\mathsf{orb}_{F,1}(H_{F,1}^*) + \#\mathsf{orb}_{F,-1}(H_{F,-1}^*)}{2^{r_1 + r_2}}.
\end{equation}
Next, if there \emph{does} exist $\rho \in R_F^\times$ such that $\on{N}(\rho) = -1$, we can take $r = 1$ by replacing $\alpha$ with $\rho  \alpha$ when $\on{N}(\alpha) < 0$ and keeping $\alpha$ the same when $\on{N}(\alpha) > 0$. We thus obtain a surjective map of sets
\begin{equation} \label{eq-firstsurgeeven1}
   H_{F,1}^* \twoheadrightarrow \left\{ \text{square roots of }[I_F^{n-2}]\right\}, \quad (I, \alpha) \mapsto [I].
\end{equation}
One checks that each of the fibers of the map~\eqref{eq-firstsurgeeven1} is in bijection with $(R_F^\times/R_F^{\times 2})_{\on{N}\equiv 1}$ and hence has size $\#(R_F^\times/R_F^{\times 2})_{\on{N}\equiv1}$. Since we assumed $R_F^\times$ has an element of negative norm, Dirichlet's Unit Theorem gives $\#(R_F^\times/R_F^{\times 2})_{\on{N}\equiv 1} = 2^{r_1 + r_2 - 1}$. Thus,
\begin{equation} \label{eq-classtoheven1}
\#\left\{\text{square roots of }[I_F^{n-2}]\right\} = \frac{\#\mathsf{orb}_{F,1}(H_{F,1}^*)}{2^{r_1 + r_2 - 1}},
\end{equation}
Now, in the above analysis, we could just as well have taken $r = -1$ by replacing $\alpha$ with $\rho  \alpha$ when $\on{N}(\alpha) > 0$ and keeping $\alpha$ the same when $\on{N}(\alpha) < 0$. Thus,
\begin{equation} \label{eq-classtoheven3}
\#\left\{\text{square roots of }[I_F^{n-2}]\right\} = \frac{\#\mathsf{orb}_{F,-1}(H_{F,-1}^*)}{2^{r_1 + r_2 - 1}},
\end{equation}
so by combining~\eqref{eq-classtoheven1} and~\eqref{eq-classtoheven3}, we see that the formula~\eqref{eq-classtoheven2} \emph{always} holds.
\end{proof}

Finally, we obtain the following formula for $\#\on{Cl}^+(R_F)[2]$:
\begin{lemma} \label{lem-narrow}
Let $n$ be even, and let $F \in \mc{F}_{n,\max}^{r_1,r_2}(f_0,\BZ)$. Then we have that
\begin{equation*}
    \#\on{Cl}^+(R_F)[2] = \frac{\#\mathsf{orb}_{F,1}(H_{F,1}^{*,+})}{2^{r_2}}.
\end{equation*}
\end{lemma}
\begin{proof}
Consider the surjective map of sets $H_{F,1}^{*,+} \twoheadrightarrow \on{Cl}^+(R_F)[2]$ that sends a pair $(I,\alpha)$ to the narrow class of $I \cdot I_F^{\frac{2-n}{2}}$. It follows immediately from~\cite[proof of Lemma~2.4]{MR3782066}\footnote{Note that, although~\cite[Lemma~2.4]{MR3782066} is stated for number fields of odd degree with Galois group equal to the full symmetric group, the proof goes through for any number field.} that each of the fibers of this map has size $2^{r_2}$.
\end{proof}

\section{Orbits over arithmetic rings and fields} \label{sec-arith}

By the results of \S\ref{sec-diffsqs}, proving our main theorems on class group statistics amounts to obtaining asymptotics for the number of pre-projective $G_n(\BZ)$-orbits of bounded height that arise from the parametrization in \S\ref{sec-bigconstruct}. By work of Siad (see Theorem~\ref{thm-usesiad}), obtaining such asymptotics amounts to counting the pre-projective $G_n(\BZ_v)$-orbits that arise from the parametrization over $\BZ_v$ for each place $v$ of $\BQ$. The purpose of this section is to perform the necessary local orbit counts. Specifically:
\begin{itemize}[leftmargin=15pt,itemsep=0pt]
\item In \S\ref{sec-zorbs}, we take $R= \BZ_p$ for a prime $p$. In \S\ref{sec-prelimsformszp}, we set the notation for binary forms over $\BZ_p$ and recall a criterion (see Theorem~\ref{thm-dedekind}) for determining whether the ring defined by a binary form is maximal over $\BZ_p$; in \S\ref{sec-312}, we give a formula (see Theorem~\ref{thm-replace}) for the number of pre-projective orbits over $\BZ_p$ that arise from the parametrization; and in \S\ref{sec-densitycalcs}, we compute the even ramification densities for the family $\mc{F}_{n,\max}(f_0, \BZ)$ (see Theorem~\ref{thm-evendensecalcs}). The $p$-adic orbit counts obtained in \S\ref{sec-312} are applied in \S\ref{sec-diffsieves}.
\item In \S\ref{sec-orbzoerfields}, we take $R = K$ to be a field, and in this setting, we distill the parametrization into a simpler form (see Proposition~\ref{prop-rationalcalc}). This result is applied in \S\ref{sec-makeart}, where we take $K = \BR$.
\item The asymptotic in Theorem~\ref{thm-usesiad} counts only those orbits that are \emph{non-distinguished}. In \S\ref{sec-datsdist}, we define the notion of distinguishedness, and we use the results of \S\S\ref{sec-zorbs}--\ref{sec-orbzoerfields} to describe when distinguished orbits arise from the parametrization. We also compute the squareful densities for the family $\mc{F}_{n,\max}(f_0, \BZ)$ (see Theorem~\ref{thm-denscalc}) and prove Theorem~\ref{thm-diffsquare}. The results of \S\ref{sec-datsdist} are applied in \S\ref{sec-zipit}.
\end{itemize}

\subsection{Orbits over $\BZ_p$} \label{sec-zorbs}

In this section, we take $n \geq 3$ and $R = \BZ_p$ for a prime $p$.

\subsubsection{Preliminaries on forms over $\BZ_p$} \label{sec-prelimsformszp}

Let $F \in \mc{F}_{n}(f_0, \BZ_p)$ be separable and primitive, and factor $F$ over $\BZ_p$ as \mbox{$F(x,y) = \prod_{i = 1}^m F_i(x,y)$} in such a way that the following three properties hold:
\begin{itemize}[leftmargin=15pt,itemsep=0pt]
    \item The leading coefficient of $F_1$ is $f_0$, and the leading coefficient of $F_i$ is $1$ for each $i \in \{2, \dots, m\}$;
    \item The reduction of $F_1$ modulo $p$ is given by $\kappa  y^{e_1}$ for some $\kappa \in \BF_p^\times$ and for some $e_1 \geq 0$; and
    \item For each $i \in \{2, \dots, m\}$, the reduction of $F_i$ modulo $p$ is given by $\ol{F}_i^{e_i}$ for some integers $e_i > 0$ and irreducible forms $\ol{F}_i$ over $\BF_p$ such that $\ol{F}_1 \defeq y, \ol{F}_2, \dots, \ol{F}_m$ are pairwise coprime.
\end{itemize}
We call any factorization of $F$ satisfying the above three properties a \emph{canonical factorization} of $F$; Hensel's Lemma implies that there always exists a unique canonical factorization of $F$ up to permutation of $F_2, \dots, F_m$. The ring $R_F$ is evidently contained in the product ring $\prod_{i = 1}^m R_{F_i}$, and comparing discriminants yields that this containment is in fact an equality.

We say that $F$ is \emph{evenly ramified} if $e_i$ is even for each $i \in \{1, \dots, m\}$; note in particular that if $F$ is evenly ramified, then $n = \deg F$ is even. We also define a similar notion for odd-degree forms: if $n$ is odd, we say that $F$ is \emph{squareful} if $e_i$ is even for each $i \in \{2, \dots, m\}$. Finally, let $\nu_p(f_0)$ denote the $p$-adic valuation of $f_0$, and observe that $e_1 = 0$ if and only if $\nu_p(f_0) = 0$.

In what follows, we require a way to ascertain whether $R_F$ is maximal for a given binary form $F \in \BZ_p[x,y]$. For this, we rely on the following result, which we call the \emph{generalized Dedekind's criterion}:
\begin{theorem}[\protect{\cite[Theorem~2]{MR2188842}}] \label{thm-dedekind}
Let $H \in \BZ_p[x,y]$ be a primitive binary form with $H(1,0) \neq 0$. Factor $H$ over $\BF_p$ as $\ol{H} = \prod_{i = 1}^\ell \ol{H}_i^{h_i}$, where $\ol{H}_i \in \BF_p[x,y]$ is irreducible for each $i$. Let $H_1 \in \BZ_p[x,y]$ be any $($binary form$)$ lift \mbox{$\ol{H}_1 \defeq \prod_{i = 1}^\ell \ol{H}_i $,} let $H_2 \in \BZ_p[x,y]$ be any lift of $\ol{H}/\ol{H}_1$, and let $H_3 = p^{-1}  (H_1  H_2 - H) \in \BZ_p[x,y]$. Then $R_H$ is maximal if and only if $\gcd(\ol{H}_1, \ol{H}_2, \ol{H}_3) = 1$.
\end{theorem}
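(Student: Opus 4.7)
My plan is to reduce the statement to the classical Dedekind criterion for monic univariate polynomials in $\BZ_p[x]$ via an $\on{SL}_2(\BZ_p)$ change of variables. The key point is that both the maximality of $R_H$ and the $\gcd$ condition in the statement are invariant under the natural $\on{SL}_2(\BZ_p)$-action on binary forms of degree $n$: the former by the discussion at the end of \S\ref{sec-ringsbins}, and the latter because if $H' = \gamma \cdot H$ then one can take $H_1' = \gamma \cdot H_1$ and $H_2' = \gamma \cdot H_2$ as lifts, which makes $H_3' = \gamma \cdot H_3$, so that the $\gcd$ condition is transported by the induced $\on{SL}_2(\BF_p)$-action on $\BF_p[x,z]$ and is preserved. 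A quick side-calculation will also show that the $\gcd$ condition is independent of the choice of lifts: replacing $H_1, H_2$ by $H_1 + p\Delta_1, H_2 + p\Delta_2$ alters $\ol{H}_3$ only by an element of the ideal $(\ol{H}_1, \ol{H}_2) \subset \BF_p[x,z]$, leaving the $\gcd$ unchanged.

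Next, since $H$ is primitive, some coefficient of $H$ is a $p$-adic unit. By choosing $\gamma \in \on{SL}_2(\BZ_p)$ appropriately (for instance, of the form $z \mapsto z + cx$ with $c$ chosen so that the new leading coefficient becomes a unit), I would apply the invariance above to reduce to the case where the leading coefficient $h_0$ of $H$ is a unit in $\BZ_p$. In this situation, $\theta := h_0^{-1} \zeta_1$ lies in $R_H$, and the multiplication table~\eqref{eq-multtable} shows that $R_H = \BZ_p[\theta] \cong \BZ_p[x]/(g(x))$, where $g(x) := h_0^{-1} H(x, 1) \in \BZ_p[x]$ is monic. Moreover, because $\ol{h}_0 \neq 0$ in $\BF_p$, the binary form $\ol{H}$ is not divisible by $z$, so dehomogenization via $z = 1$ gives a bijection between the irreducible factorizations of $\ol{H}$ and of $\ol{g}$ over $\BF_p$; correspondingly, the $\gcd$ of the binary forms $\ol{H}_1, \ol{H}_2, \ol{H}_3$ is trivial if and only if the $\gcd$ of their dehomogenizations is a unit in $\BF_p[x]$.

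At this point the statement reduces to the classical Dedekind criterion for the monogenic order $\BZ_p[\theta]$, which is a well-known result in algebraic number theory --- its standard proof proceeds by computing the conductor of $\BZ_p[\theta]$ inside its normalization and identifying the image of $(p)$ under the induced factorization. The main obstacle I anticipate is carrying out the $\on{SL}_2(\BZ_p)$-reduction carefully enough to ensure that none of the asymmetries between binary forms and dehomogenized polynomials cause trouble: in particular, confirming that the $\gcd$ condition in its homogeneous form is genuinely the right translation of the classical $\gcd$ condition, and handling any factors of $z$ that may occur in $\ol{H}$ before the change of variables is applied. An alternative, arguably more conceptual route that I may pursue instead is to use the geometric description $\Spec R_H = \on{Proj}(\BZ_p[x,z]/(H))$ from~\eqref{eq-closed} and verify directly, via the Jacobian criterion, that $R_H$ is regular at every closed point of its mod-$p$ fiber precisely when $\gcd(\ol{H}_1, \ol{H}_2, \ol{H}_3) = 1$; this would avoid the need for any change of basis at all.
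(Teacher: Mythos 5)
The paper does not prove this statement: it is cited verbatim from \cite[Theorem~2]{MR2188842} and used as a black box, so there is no paper proof to compare against.

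That said, your proposed reduction is sound and would constitute an honest proof. The $\on{SL}_2(\BZ_p)$-invariance of both sides is correct for the reason you give: the substitution action is a ring homomorphism, so choosing $H_1'=\gamma\cdot H_1$, $H_2'=\gamma\cdot H_2$ yields $H_3'=\gamma\cdot H_3$, and the induced $\on{SL}_2(\BF_p)$-action on $\BF_p[x,z]$ transports the gcd condition; the independence-of-lifts side calculation also goes through exactly as you sketch, because replacing $H_1\mapsto H_1+p\Delta_1$ only moves $\ol H_3$ by an element of the ideal $(\ol H_1,\ol H_2)$, which does not change the ideal $(\ol H_1,\ol H_2,\ol H_3)$. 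Once $h_0\in\BZ_p^\times$, the basis $\{1,\zeta_1,\dots,\zeta_{n-1}\}$ of $R_H$ is related to $\{1,\theta,\dots,\theta^{n-1}\}$ by a unipotent-plus-unit change of basis, so $R_H=\BZ_p[\theta]\cong\BZ_p[x]/(g)$ with $g=h_0^{-1}H(x,1)$ monic, as you say. The one place you should be a bit more careful is the final translation: the classical criterion is stated for a monic $g$ with monic lifts $g_1,g_2$, whereas $H_1(x,1),H_2(x,1)$ are lifts only up to units in $\BZ_p^\times$, so $\ol g_3$ and $\ol H_3(x,1)$ differ by a unit times an element of $(\ol g_1,\ol g_2)$; this is harmless because the ideal $(\ol g_1,\ol g_2,\ol g_3)$ in $\BF_p[x]$ is unchanged, but it is worth writing out. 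Your alternative geometric route is also viable, but beware that the right thing to check is regularity of the total space $\Spec R_H$ (equivalently, that $H$ does not lie in $\mathfrak m^2$ for the maximal ideal $\mathfrak m$ of $\mc O_{\BP^1_{\BZ_p}}$ at each closed point of the special fibre), not regularity of the special fibre itself; phrased that way it is essentially the content of Lemma~\ref{lem-rededekind}, which the paper proves \emph{from} the present theorem rather than the other way around.
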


Theorem~\ref{thm-dedekind} implies that maximality over $\BZ_p$ is determined by congruence conditions modulo $p^2$. The following restatement of the generalized Dedekind's criterion will prove convenient in \S\ref{sec-312}:
\begin{lemma} \label{lem-rededekind}
Let $H \in \BZ_p[x,y]$ be a binary form with unit leading coefficient. The ring $R_H$ is maximal if and only if there does not exist a monic form $H' \in \BZ_p[x,y]$ such that its reduction $\ol{H}' \in \BF_p[x,y]$ is irreducible and such that $H \in (p^2, p  H', {H'}^2) \subset \BZ_p[x,y]$.
\end{lemma}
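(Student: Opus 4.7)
The plan is to derive Lemma~\ref{lem-rededekind} directly from Dedekind's criterion (Theorem~\ref{thm-dedekind}) by matching up two equivalent ways of encoding a ``doubled'' irreducible factor modulo $p$. The hypothesis $H \in (p^2, pH', {H'}^2)$, when reduced mod $p$, already forces $\ol{H'}^2 \mid \ol{H}$; so if $\ol{H'}$ is irreducible it agrees up to a unit with some $\ol{P}_j$ in the factorization $\ol{H} = \prod_i \ol{P}_i^{h_i}$, with multiplicity $h_j \geq 2$. The ideal $(p^2, pH', {H'}^2)$ and the polynomial $H_3 = p^{-1}(H_1 H_2 - H)$ from Dedekind's criterion encode the same ``next order'' information beyond this doubled factor, and the proof amounts to making the match precise via suitable lifts.

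For the direction asserting that existence of such an $H'$ forces $R_H$ to be non-maximal, I would start from $H = p^2 a + pH'b + {H'}^2 c$ with $\ol{H'} = \ol{P}_j$ and $h_j \geq 2$. Then $\ol{H'}$ divides both the squarefree radical $\ol{H_1} = \prod_i \ol{P}_i$ and the complementary factor $\ol{H_2} = \ol{H}/\ol{H_1}$, so only $\ol{H'} \mid \ol{H_3}$ remains to be verified. Choosing lifts $H_1 = H'G + pK_1$ and $H_2 = {H'}^{h_j - 1} G' + pK_2$ --- which exist because $\ol{H'}$ divides the respective reductions --- one checks by direct expansion that $H_1 H_2$ lies in $(p^2, pH', {H'}^2)$, since $h_j \geq 2$. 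Combined with the hypothesis on $H$, this gives $pH_3 = H_1H_2 - H \in (p^2, pH')$, so $H_3 \in (p, H')$ and hence $\ol{H'} \mid \ol{H_3}$. Theorem~\ref{thm-dedekind} then yields non-maximality.

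For the converse, Theorem~\ref{thm-dedekind} supplies an irreducible monic $\ol{H'} \in \BF_p[x,z]$ dividing $\gcd(\ol{H_1}, \ol{H_2}, \ol{H_3})$; the same observation as above shows $\ol{H'} = \ol{P}_j$ with $h_j \geq 2$. Pick any monic lift $H' \in \BZ_p[x,z]$. The identical factorizations $H_1 = H'G + pK_1$ and $H_2 = {H'}^{h_j - 1} G' + pK_2$ yield $H_1 H_2 \in (p^2, pH', {H'}^2)$; the divisibility $\ol{H'} \mid \ol{H_3}$ gives $pH_3 \in (p^2, pH')$; and $H = H_1H_2 - pH_3$ then puts $H \in (p^2, pH', {H'}^2)$. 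The only real content beyond Dedekind's criterion is the bookkeeping matching ``$\ol{H'} \mid \ol{H_3}$'' against the refinement that upgrades $H \in (p, {H'}^2)$ (automatic from $h_j \geq 2$) to $H \in (p^2, pH', {H'}^2)$; once the lifts of $H_1, H_2$ are chosen to respect the factor $H'$, this bookkeeping is mechanical and I anticipate no serious obstacle.
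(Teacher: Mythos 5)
Your proof is correct and is essentially the same argument the paper intends: the paper's own proof is a one-line citation stating that the lemma follows from Theorem~\ref{thm-dedekind} in the same way that Lenstra's Corollary 3.2 follows from his Lemma 3.1 in~\cite{MR2367325}, and what you have written is precisely that translation carried out explicitly. The only spot that is stated slightly faster than it deserves is the deduction $pH_3 \in (p^2, pH', {H'}^2) \Rightarrow H_3 \in (p,H')$, which silently uses that $\BF_p[x,z]$ is a domain and $\ol{H'} \neq 0$ to force the ${H'}^2$-coefficient to be divisible by $p$; this is harmless and easily supplied.
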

\begin{proof}
The lemma follows from Theorem~\ref{thm-dedekind} in precisely the same way that~\cite[Corollary~3.2]{MR2367325} follows from~\cite[Lemma~3.1]{MR2367325}, which is Dedekind's criterion for monic single-variable polynomials.
\end{proof}

In the case where $\nu_p(f_0) > 0$, the generalized Dedekind's criterion yields the following result, which restricts the possible values of the pair $(\nu_p(f_0), e_1)$ in a canonical factorization of $F$ when $R_F$ is maximal:
\begin{lemma} \label{lem-d}
If $\nu_p(f_0) > 0$, \mbox{then $R_{F_1}$ is maximal if and only if $\min\{\nu_p(f_0), e_1\} = 1$.}
\end{lemma}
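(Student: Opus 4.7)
My plan is to invoke Dedekind's criterion (Theorem~\ref{thm-dedekind}) applied to the factor $F_1$ and read off the maximality condition directly. Since $\nu_p(f_0) > 0$ forces $e_1 > 0$, the coefficient of $z^{e_1}$ in $F_1$ is a unit (it reduces to $\kappa \in \BF_p^\times$), so $F_1$ is primitive; also, $F_1$ has nonzero leading coefficient $f_0$, so Dedekind's criterion genuinely applies. The reduction $\ol{F}_1 = \kappa \cdot z^{e_1}$ has $z$ as its unique distinct irreducible factor with multiplicity $e_1$, which will make the criterion essentially a one-variable computation.

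Concretely, I would take $H_1 = z$ as a lift of the radical of $\ol{F}_1$, $H_2 = \kappa_0 \cdot z^{e_1-1}$ (where $\kappa_0 \in \BZ_p$ lifts $\kappa$) as a lift of $\ol{F}_1/z$, and set $H_3 = p^{-1} \cdot (H_1 H_2 - F_1) \in \BZ_p[x,z]$; the containment $H_1 H_2 - F_1 \in p\BZ_p[x,z]$ is immediate from $\ol{F}_1 = \kappa z^{e_1}$. The maximality of $R_{F_1}$ is then equivalent to
\[
\gcd\bigl(z,\; \kappa \cdot z^{e_1-1},\; \ol{H}_3\bigr) = 1 \quad \text{in } \BF_p[x,z].
\]

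The argument then splits into two cases. If $e_1 = 1$, then $\ol{H}_2 = \kappa$ is a unit, so the gcd is trivially $1$ and $R_{F_1}$ is automatically maximal; since $\nu_p(f_0) > 0$, this matches $\min\{\nu_p(f_0), e_1\} = 1$. If $e_1 \geq 2$, then $z$ divides both $\ol{H}_1$ and $\ol{H}_2$, so maximality reduces to the condition $z \nmid \ol{H}_3$, which I would test by specializing at $z = 0$: since $F_1(x,0) = f_0 \cdot x^{\deg F_1}$ and $H_1 H_2$ vanishes at $z=0$, we get $H_3(x,0) = -p^{-1} f_0 \cdot x^{\deg F_1}$, and this is nonzero modulo $p$ precisely when $\nu_p(f_0) = 1$. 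In the $e_1 \geq 2$ case this again matches $\min\{\nu_p(f_0), e_1\} = 1$, completing the equivalence. There is no real obstacle here beyond correctly choosing the lifts and keeping track of the valuation of $f_0 = F_1(1,0)$; the key observation is that after Dedekind reduces the question to examining $\ol{H}_3$ modulo $z$, the only remaining data is $\nu_p(F_1(x,0))$ compared against $e_1$.
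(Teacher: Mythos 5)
Your proof is correct and follows the same approach as the paper: apply Dedekind's criterion to $F_1$ with $H_1 = z$, observe that $z$ already divides $\ol{H}_1$ and $\ol{H}_2$ once $e_1 \geq 2$, and then check $z \mid \ol{H}_3$ by evaluating $H_3(x,0) = -p^{-1}f_0 x^{\deg F_1}$, which vanishes mod $p$ exactly when $\nu_p(f_0) > 1$. Your choice $H_2 = \kappa_0 z^{e_1-1}$ is a slightly more careful lift of $\ol{F}_1/z$ than the paper's $H_2 = z^{e_1-1}$, but since $\ol{H}_1 = z$ already absorbs $\ol{H}_2$ into the gcd, the discrepancy has no effect on the conclusion.
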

\begin{proof}
Clearly, $R_{F_1}$ is maximal when $e_1 = 1$, so suppose $e_1 > 1$. Let $H = F_1$, $H_1 = y$, and $H_2 = y^{e_1 - 1}$. Then the leading coefficient of $H_3 = p^{-1}  (H_1  H_2 - H)$ is divisible by $p^{\nu_p(f_0)-1}$, and so $\gcd(\ol{H}_1, \ol{H}_2, \ol{H}_3) = 1$ if and only if $\nu_p(f_0) = 1$; the lemma then follows from Theorem~\ref{thm-dedekind}.
\end{proof}

\subsubsection{Pre-projective orbits} \label{sec-312}

We now describe the pre-projective $G_n(\BZ_p)$-orbits arising from the parametrization. We obtain formulas for $\#\mathsf{orb}_{F,r}(H_{F,r}^*)$ that depend on the parity of $p$, whether $F$ is evenly ramified, and whether $r$ is a quadratic residue modulo $p$.

\begin{theorem} \label{thm-replace}
Let $F$ be as in \S\ref{sec-prelimsformszp}, and let $r \in \BZ_p^\times$. 
The elements of $\mathsf{orb}_{F,r}(H_{F,r}^*)$ are in bijection with those of $$(R_F^\times/R_F^{\times 2})_{\on{N}\equiv r} \defeq \{\rho \in R_F^\times/R_F^{\times 2} : \on{N}(\rho) = r \in \BZ_p^\times/\BZ_p^{\times 2}\},$$ and we have the following formulas:
\begin{align} \label{eq-orboverzpnum}
   & \#\mathsf{orb}_{F,r}(H_{F,r}^*) = \#(R_F^\times/R_F^{\times 2})_{\on{N}\equiv r} = \\ &  \begin{cases} \tfrac{1}{2}  \#R_F^\times[2], & \text{ if $p > 2$ and $F$ is not evenly ramified,} \\
\#R_F^\times[2], & \text{ if $p > 2$, $F$ is evenly ramified, and $r = 1 \in \BZ_p^\times/\BZ_p^{\times 2}$,} \\
0, & \text{ if $p > 2$, $F$ is evenly ramified, and $r \neq 1 \in \BZ_p^\times/\BZ_p^{\times 2}$,} \\
2^{n-2} \#R_F^\times[2], & \text{ if $p = 2$, $R_F$ is maximal, and $F$ is not evenly ramified.}
\end{cases} \nonumber
\end{align}
The stabilizer $\on{Stab}_{G_n(\BZ_p)}(A,B)$ in $G_n(\BZ_p)$ of any representative $(A,B) \in \BZ_p^2 \otimes_{\BZ_p} \Sym_2 \BZ_p^n$ of any element of $\mathsf{orb}_{F,r}(H_{F,r}^*)$ is isomorphic to $R_F^\times[2]_{\on{N}\equiv1}$ if $n$ is odd and to $R_F^\times[2]$ if $n$ is even.
\end{theorem}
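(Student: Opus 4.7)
The plan is to split Theorem~\ref{thm-replace} into the stabilizer claim (handled quickly via Proposition~\ref{prop-stabs}) and the counting claim (the bulk of the work), and to handle the latter by first establishing a general bijection of $H_{F,r}^*$ with the coset $(R_F^\times/R_F^{\times 2})_{\on{N}\equiv r}$ and then evaluating that coset's cardinality case by case using local unit-group arithmetic.

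For the stabilizer claim, since $(I,\alpha)\in H_{F,r}^*$ has $I$ invertible, every $R_F$-endomorphism of $I$ is multiplication by a unique element of $R_F$, so $\on{End}_{R_F}(I)\cong R_F$. Proposition~\ref{prop-stabs} then immediately identifies the $G_n(\BZ_p)$-stabilizer of any representative as $R_F^\times[2]_{\on{N}\equiv 1}$ for $n$ odd and $R_F^\times[2]$ for $n$ even. The key input for the bijection is that $R_F$, being a $\BZ_p$-order in the étale $\BQ_p$-algebra $K_F$, is Noetherian semi-local, hence $\on{Pic}(R_F)=0$ and every invertible fractional ideal of $R_F$ is principal. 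Fixing a generator $\alpha_0\in K_F^\times$ of $(I_F^{n-2})^{-1}$, every class in $H_{F,r}^*$ is therefore represented by a pair $(R_F,u\alpha_0)$ with $R_F$ equipped with its standard basis $(1,\zeta_1,\dots,\zeta_{n-1})$ from~\eqref{eq-rfbasis} and $u\in R_F^\times$. The norm identity defining $H_{F,r}^*$ pins down $\on{N}(u)$ to a single class modulo $\BZ_p^{\times 2}$ (determined by $r$ and $\on{N}(\alpha_0)$), and the relation $\sim$ collapses to equivalence modulo $R_F^{\times 2}$ once one carefully tracks which scalars $\kappa\in K_F^\times$ preserve $R_F$ as an ideal (forcing $\kappa\in R_F^\times$) and imposes the $G_n(\BZ_p)$-determinant constraint $\on{N}(\kappa)\in\{1\}$ or $\{\pm 1\}$. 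Combined with Proposition~\ref{prop-1to1}, this yields $\#\mathsf{orb}_{F,r}(H_{F,r}^*)=\#(R_F^\times/R_F^{\times 2})_{\on{N}\equiv r}$.

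It then remains to compute $\#(R_F^\times/R_F^{\times 2})_{\on{N}\equiv r}$ in each case using the local structure of $R_F^\times$. For $p$ odd, the pro-$p$ one-unit subgroup of $R_F^\times$ is uniquely $2$-divisible, so $\#R_F^\times/R_F^{\times 2}=\#R_F^\times[2]$; the norm-mod-squares map $R_F^\times/R_F^{\times 2}\to\BZ_p^\times/\BZ_p^{\times 2}$ is surjective precisely in the non-evenly-ramified case (translating through Lemmas~\ref{lem-prodmax} and~\ref{lem-d} together with local norm calculations in each factor $\mathcal{O}_{K_{F_i}}$), yielding $\tfrac{1}{2}\#R_F^\times[2]$ there, and $\#R_F^\times[2]$ or $0$ in the evenly-ramified case according to whether $r\in\BZ_p^{\times 2}$. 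For $p=2$ with $R_F$ maximal, I invoke the classical identity $[\mathcal{O}_K^\times:\mathcal{O}_K^{\times 2}]=2^{[K:\BQ_2]}\cdot\#\mu_2(K)$ factor-by-factor to obtain $\#R_F^\times/R_F^{\times 2}=2^n\cdot\#R_F^\times[2]$, and verify that non-even-ramification forces the norm map to surject onto $\BZ_2^\times/\BZ_2^{\times 2}$ (of order $4$), producing the advertised factor of $2^{n-2}$.

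The main obstacle I anticipate is the meticulous bookkeeping in the bijection step, where the $K_F^\times$-scaling action, the determinant constraint distinguishing $\on{SL}_n$ from $\on{SL}_n^\pm$ (and hence affecting the two parities of $n$ differently), and the precise norm identity on $\alpha$ all interact, and the resulting quotient must be shown to be literally the coset $(R_F^\times/R_F^{\times 2})_{\on{N}\equiv r}$ rather than some nearby quotient involving $(R_F^\times)_{\on{N}=1}/(R_F^\times)_{\on{N}=1}^2$. Second in difficulty is verifying the full norm-surjectivity onto $\BZ_2^\times/\BZ_2^{\times 2}$ in the $p=2$, non-evenly-ramified case: this is genuinely stronger than the odd-$p$ analog (which needs only a single nonsquare in the image), and requires analyzing the structure of local norm groups at each irreducible factor of $F$ over $\BZ_2$, rather than merely tracking parity of local degrees.
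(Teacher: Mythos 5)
Your proposal tracks the paper's actual argument quite closely: the stabilizer claim falls out of Proposition~\ref{prop-stabs} exactly as you say (using $\on{End}_{R_F}(I)\cong R_F$ for invertible $I$); the paper also reduces to principal ideals (it cites the fact that $R_F$ is a finite product of complete local rings, which is the same as your semi-local observation); the bijection is built by fixing a principal generator and tracking the norm/scaling identities just as you outline; and the $p>2$ cardinality computation uses $R_F^\times\simeq\Gamma_p\times\BZ_p^n$, equivalent to your unique-$2$-divisibility argument. Your $p=2$ count via the classical index formula $[\mc{O}_K^\times:\mc{O}_K^{\times 2}]=2^{[K:\BQ_2]}\cdot\#\mu_2(K)$, applied factor by factor, is a correct alternative route to the paper's $2^n\cdot\#R_F^\times[2]$.

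However, there is a genuine gap in your treatment of the $p>2$ surjectivity claim. For odd $p$, the three cases of \eqref{eq-orboverzpnum} do \emph{not} assume $R_F$ is maximal, yet your proposed argument (``translating through Lemmas~\ref{lem-prodmax} and~\ref{lem-d} together with local norm calculations in each factor $\mc{O}_{K_{F_i}}$'') implicitly presupposes the decomposition $R_F\simeq\prod_i\mc{O}_{K_{F_i}}$, which holds only in the maximal case; for a proper suborder, $R_F^\times$ is a proper subgroup of $\prod_i\mc{O}_{K_{F_i}}^\times$ and it is not a priori clear that the norm image mod squares is unchanged. The paper bridges this by an explicit reduction step: since, for odd $p$, surjectivity of the norm-mod-squares map is equivalent to the norm, viewed as a $\BZ$-coefficient polynomial in the coordinates $\kappa_i$ of $\kappa$ and the coefficients $f_i$ of $F$, representing a nonresidue mod $p$, the property only depends on $F\pmod p$; one then uses Dedekind's criterion (Lemma~\ref{lem-rededekind} and the construction in Lemma~\ref{lem-prodmax}'s proof) to replace $F$ with a mod-$p$-congruent $F'$ whose $R_{F'}$ \emph{is} maximal, after which the factor-by-factor norm analysis applies. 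You need some version of this reduction; without it the odd-$p$ non-maximal cases of~\eqref{eq-orboverzpnum} are unproved. Your anticipated difficulty about full surjectivity onto $\BZ_2^\times/\BZ_2^{\times 2}$ is well-placed but handled in the paper by the standard norm-index bound (the image of $\on{N}_{R_{F_i}/\BZ_2}$ on units has index dividing $e_i$, so odd $e_i$ forces surjectivity of the induced map on $\BZ_2^\times/\BZ_2^{\times 2}$), which is lighter than a full local norm-group analysis.
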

\begin{remark}
If $F$ is evenly ramified, taking $r = -1 \in \BZ_p^\times/\BZ_p^{\times 2}$ in~\eqref{eq-orboverzpnum} yields $\#\mathsf{orb}_{F,-1}(H_{F,-1}^*) = \#R_F^\times[2]$ if $p \equiv 1 \pmod 4$ and $\#\mathsf{orb}_{F,-1}(H_{F,-1}^*) = 0$ if $p \equiv 3 \pmod 4$. Let $m$ be the number of irreducible factors of $F$ over $\BZ_p$. When $R_F$ is maximal, the quantities on the right-hand side of~\eqref{eq-orboverzpnum} are $2^{m-1}$, $2^m$, $0$, $2^{m+n-2}$, respectively, and the stabilizer size is $2^{m-1}$ for $n$ odd and $2^m$ for $n$ even.
\end{remark}
\begin{proof}[Proof of Theorem~\ref{thm-replace}]
Since $R_F$ is a module-finite extension of the complete local ring $\BZ_p$, it is a finite product of complete local rings (see~\cite{modfin}), implying that every invertible fractional ideal of $R_F$ is principal. 
Since we took $F$ to be primitive, the based fractional ideal $I_F$ is invertible, and so there exists $\gamma' \in K_F^\times$ such that $I_F = (\gamma')$ as ideals (\emph{not necessarily} as based ideals). \emph{A priori}, all we know is that $\on{N}(\gamma') = r'  \on{N}(I_F)$ for some $r' \in \BZ_p^\times$, since the norm of an ideal depends on the choice of basis. Letting $\gamma = r'  \gamma'$, we find that $I_F = (\gamma)$ and that $\on{N}(\gamma^{n-2}) = {r'}^{(n+1)(n-2)}  \on{N}(I_F^{n-2})$.

To prove the desired bijection, it suffices by Proposition~\ref{prop-1to1} to show that the elements of $H_{F,r}^*$ are in bijection with those of $(R_F^\times/R_F^{\times 2})_{\on{N}\equiv r}$. We do this by constructing mutually inverse maps in both directions. For the forward direction, let $(I, \alpha) \in H_{F,r}^*$. By the observation above, $I = (\beta)$ as ideals and $\on{N}(I) = r''  \on{N}(\beta)$ for some $\beta \in K_F$ and $r'' \in \BZ_p^\times$. Then $(I,\alpha) \sim ((1), \beta^{-2}  \alpha)$, where the unit ideal $(1)$ has the basis given by scaling each basis element of $I$ by $\beta^{-1}$. It follows that $(1)^2 = \beta^{-2}  \alpha  (\gamma)^{n-2}$, so $\rho \defeq \beta^{-2}  \alpha  \gamma^{n-2} \in R_F^\times$. But we also have 
\begin{equation} \label{eq-funnyscalerol}
{r''}^{2} = \on{N}((1))^2 = r  \on{N}(\beta^{-2}  \alpha)  \on{N}(I_F^{n-2}) =r   {r'}^{-(n+1)(n-2)}  \on{N}(\rho).
\end{equation}
It follows from~\eqref{eq-funnyscalerol} that $\on{N}(\rho) = r \in \BZ_p^\times/\BZ_p^{\times 2}$. Since $\beta$ is only unique up to scaling by elements of $R_F^\times$, the assignment $(I,\alpha) \mapsto \rho$ yields a well-defined map of sets $\Sigma_1 \colon H_{F,r}^* \to (R_F^{\times}/R_F^{\times 2})_{\on{N} \equiv r}$.

For the reverse direction, let $\rho \in R_F^\times$ be a representative of an element of $(R_F^{\times}/R_F^{\times 2})_{\on{N} \equiv r}$. Then $\alpha \defeq \rho  \gamma^{2-n}$ satisfies $(1)^2 = \alpha  I_F^{n-2}$ as ideals and 
$$\on{N}(\alpha) = \on{N}(\rho)  \on{N}(\gamma^{2-n}) = r \wt{r}^2  \on{N}(I_F^{n-2})^{-1}$$ 
for some $\wt{r} \in \BZ_p^\times$. Upon endowing the unit ideal $(1)$ with any basis such that $\on{N}((1)) = \wt{r}$, we find that $((1),\alpha) \in H_{F,r}^*$. If we had chosen a different representative $\rho' \in R_F^\times$ of the same element of $(R_F^\times/R_F^{\times 2})_{\on{N}\equiv r}$, then the resulting pair $((1),\alpha')$ would be equivalent to the scaling of the pair $((1),\alpha)$ by the factor $\rho'/\rho$. Thus, the assignment $\rho \mapsto ((1), \alpha)$ gives a well-defined map of sets $\Sigma_2 \colon (R_F^\times/R_F^{\times 2})_{\on{N}\equiv r} \to H_{F,r}^*$. (In particular, note that $H_{F,r}^*$ is nonempty when $(R_F^\times/R_F^{\times 2})_{\on{N}\equiv r}$ is.) One checks that the maps $\Sigma_1$ and $\Sigma_2$ are mutually inverse.

  We now compute $\#(R_F^\times/R_F^{\times 2})_{\on{N}\equiv r}$. Recall that the unit group $R_F^\times$ can be expressed as $R_F^\times \simeq \Gamma_p \times \BZ_p^n$, where $\Gamma_p$ is a finite abelian group. Upon observing that $\#(\Gamma_p/\Gamma_p^2) = \#\Gamma_p[2]$ and $\Gamma_p[2] = R_F^\times[2]$, we find that
\begin{equation} \label{eq-groupquotsizes}
    \#(R_F^\times/R_F^{\times 2}) = \#(\Gamma_p/\Gamma_p^2) \cdot \big(\#(\BZ_p/2\BZ_p)\big)^n = \begin{cases} \#R_F^\times[2], & \text{ if $p > 2$,} \\ 2^n  \#R_F^\times[2], & \text{ if $p = 2$.} \end{cases}
\end{equation}
To determine $\#(R_F^\times/R_F^{\times 2})_{\on{N}\equiv r}$ from the value of $\#(R_F^\times/R_F^{\times 2})$ given in~\eqref{eq-groupquotsizes}, consider the norm map 
\begin{equation} \label{eq-normalize}
R_F^\times/R_F^{\times 2} \to \BZ_p^\times/\BZ_p^{\times 2}, \quad \rho \mapsto \on{N}(\rho).
\end{equation}
It follows from~\eqref{eq-groupquotsizes} and the fact that the map~\eqref{eq-normalize} is a group homomorphism that
\begin{align} \label{eq-surjelist}
& \#(R_F^\times/R_F^{\times 2})_{\on{N}\equiv r} = \\
& \begin{cases} \tfrac{1}{2}  \#R_F^\times[2], & \text{ if $p > 2$ and~\eqref{eq-normalize} is surjective,} \\
\#R_F^\times[2], & \text{ if $p > 2$,~\eqref{eq-normalize} is not surjective, and $r = 1 \in \BZ_p^\times/\BZ_p^{\times 2}$,} \\
0, & \text{ if $p > 2$,~\eqref{eq-normalize} is not surjective, and $r \neq 1 \in \BZ_p^\times/\BZ_p^{\times 2}$,} \\
2^{n-2}  \#R_F^\times[2], & \text{ if $p = 2$ and~\eqref{eq-normalize} is surjective.}
\end{cases} \nonumber
\end{align}
The formulas~\eqref{eq-orboverzpnum} then follow immediately from~\eqref{eq-surjelist} and the next lemma: 
\begin{lemma} \label{lem-surjtrials}
If $p > 2$, then~\eqref{eq-normalize} is surjective if and only if $F$ is not evenly ramified. If $p = 2$, then~\eqref{eq-normalize} is surjective if $R_F$ is maximal and $F$ is not evenly ramified.
\end{lemma}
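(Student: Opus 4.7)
The plan is to treat the cases $p > 2$ and $p = 2$ separately, reducing each to a multiplicative calculation --- the first at the residue-field level, the second through a tower of local extensions.

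For $p > 2$, I would first note that the subgroup $1 + pR_F \subset R_F^\times$ is uniquely $2$-divisible, since the equation $(1 + pw)^2 = 1 + pm$ can be solved for $w \in R_F$ by Newton iteration (using that $2 \in \BZ_p^\times$). Consequently, reduction modulo $p$ (surjective on units by Hensel's lemma, as $R_F$ is $p$-adically complete) induces an isomorphism $R_F^\times/R_F^{\times 2} \xrightarrow{\sim} \ol R_F^\times/\ol R_F^{\times 2}$, where $\ol R_F \defeq R_F/pR_F$, and likewise $\BZ_p^\times/\BZ_p^{\times 2} \xrightarrow{\sim} \BF_p^\times/\BF_p^{\times 2}$, so the surjectivity question for~\eqref{eq-normalize} becomes surjectivity of the reduced norm $\ol R_F^\times/\ol R_F^{\times 2} \to \BF_p^\times/\BF_p^{\times 2}$. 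Hensel's lemma applied to the canonical factorization $F = \prod_{i=1}^m F_i$ then identifies the maximal ideals of $R_F$ containing $p$ with the distinct irreducible factors $\ol F_i$ of $\ol F$, decomposing $\ol R_F = \prod_{i=1}^m \ol R_{F,i}$ into local Artinian $\BF_p$-algebras, where $\ol R_{F,i}$ has residue field $\BF_{p^{f_i}}$ (writing $f_i \defeq \deg \ol F_i$, with $f_1 = 1$ if $\ol F_1 = z$) and $\BF_p$-dimension $e_i f_i$ (which can be read off the basis~\eqref{eq-rfbasis}). Since $1 + \mathfrak m_{\ol R_{F,i}}$ is again uniquely $2$-divisible, $\ol R_{F,i}^\times/\ol R_{F,i}^{\times 2} \simeq \BF_{p^{f_i}}^\times/\BF_{p^{f_i}}^{\times 2}$, a group of order $2$.

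The next step is to evaluate the norm on each local factor. Lifting the residue field via a Cohen section $\BF_{p^{f_i}} \hookrightarrow \ol R_{F,i}$, I write any unit $u \in \ol R_{F,i}^\times$ as $u = \alpha(1 + m)$ with $\alpha \in \BF_{p^{f_i}}^\times$ and $m$ nilpotent. Because $\det(I + M) = 1$ for any nilpotent endomorphism $M$, the norm satisfies $N_{\ol R_{F,i}/\BF_p}(u) = N_{\BF_{p^{f_i}}/\BF_p}(\alpha)^{e_i}$. A direct cyclic-group calculation shows that $N_{\BF_{p^{f_i}}/\BF_p}$ is surjective on square classes for every $f_i$: if $g$ generates $\BF_{p^{f_i}}^\times$, then $N(g) = g^{(p^{f_i} - 1)/(p-1)}$, whose $(p-1)/2$-th power is $g^{(p^{f_i}-1)/2} = -1$, confirming that $N(g)$ is a non-square in $\BF_p^\times$. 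Hence the norm contributed by the $i$-th local factor is surjective on square classes iff $e_i$ is odd, and the overall norm is surjective on square classes iff at least one $e_i$ is odd, i.e., iff $F$ is not evenly ramified.

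For $p = 2$, the maximality hypothesis together with Lemma~\ref{lem-prodmax} writes $R_F = \prod_i \mathcal O_{K_{F_i}}$ as a product of rings of integers of local fields. Since $F$ is not evenly ramified, some $e_j$ is odd, and it suffices to prove that the single norm $N_{K_{F_j}/\BQ_2} \colon \mathcal O_{K_{F_j}}^\times \to \BZ_2^\times$ is surjective on square classes. Letting $L_j$ denote the maximal unramified subextension of $K_{F_j}/\BQ_2$, I would handle the tower $K_{F_j}/L_j/\BQ_2$ in two steps. For the totally ramified odd-degree step $K_{F_j}/L_j$: the standard bound $[L_j^\times : N(K_{F_j}^\times)] \mid e_j$ from local class field theory forces this index to be odd, and combined with the fact that $L_j^\times/L_j^{\times 2}$ is a $2$-group, a gcd argument yields $L_j^\times = N(K_{F_j}^\times) \cdot L_j^{\times 2}$; a valuation-matching adjustment (writing $x = \pi_K^{2k} x_0$ and absorbing the uniformizer contributions into the square factor) then upgrades this to $\mathcal O_{L_j}^\times = N(\mathcal O_{K_{F_j}}^\times) \cdot \mathcal O_{L_j}^{\times 2}$. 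For the unramified step $L_j/\BQ_2$: local class field theory gives $|\BQ_2^\times/N(L_j^\times)| = f_j$, and since $N(\pi_{L_j}) = N(2) = 2^{f_j}$ already accounts for this entire index on the valuation side, $N(\mathcal O_{L_j}^\times) = \BZ_2^\times$ outright. Composing these two surjections and tracing a target unit $u \in \BZ_2^\times$ back through the tower yields a preimage $x \in \mathcal O_{K_{F_j}}^\times$ with $N_{K_{F_j}/\BQ_2}(x) \equiv u \pmod{\BZ_2^{\times 2}}$.

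The step I expect to require the most care is the identification, in the $p$-odd argument, of the local decomposition of $\ol R_F$ and the $\BF_p$-dimensions of its pieces with the canonical factorization data $(e_i, f_i)$ --- especially when $\nu_p(f_0) > 0$. In that regime the factor $F_1$ is not monic, the basis~\eqref{eq-rfbasis} of $R_F$ mixes powers of $\theta$ with factors of $f_0$, and one must verify that reduction modulo $p$ does not cause the local piece of $\ol R_F$ corresponding to $\ol F_1 = z$ to merge with other pieces or to have dimension different from $e_1 f_1 = e_1$.
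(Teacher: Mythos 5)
Your argument is correct and, for the $p>2$ case, takes a genuinely different route from the paper. Both proofs begin the same way --- observing that the norm-modulo-squares map only depends on $\ol F \in \BF_p[x,z]$ --- but then diverge. The paper uses that observation to \emph{replace} $F$ by a new form $F' \equiv F \pmod p$ for which $R_{F'}$ is maximal (constructed via Dedekind's criterion), and then works entirely with products of DVRs. You instead stay with the possibly non-maximal $R_F$ and analyze $\ol R_F = R_F/pR_F$ directly, using its local decomposition and the Cohen section to compute the norm on each Artinian factor. The paper's route is slicker because, once one is in the maximal case, Lemma~\ref{lem-prodmax} immediately gives $R_F \simeq \prod_i \mc{O}_{K_{F_i}}$ and the norm computation is classical; the price is the deformation lemma (that one can always deform to a maximal $R_{F'}$ within a mod-$p$ residue class). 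Your route avoids the deformation entirely, at the price of justifying the structural claim about $\ol R_F$.

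That structural claim is exactly the point you flag at the end, and it is indeed the place where your writeup is thin. The assertion that $\ol R_F$ decomposes as $\prod_i \ol R_{F,i}$ with $\ol R_{F,i}$ local of residue field $\BF_{p^{f_i}}$ and $\BF_p$-dimension $e_i f_i$ is \emph{true} (even when $\nu_p(f_0)>0$ and $R_F$ is non-maximal), but it does not ``read off'' the basis~\eqref{eq-rfbasis} in any obvious way. The clean justification is the geometric interpretation from \S\ref{sec-ringsbins}: since $\Spec R_F = \on{Proj}(\BZ[x,z]/(F))$ and $\on{Proj}$ commutes with base change, $\Spec \ol R_F = \on{Proj}(\BF_p[x,z]/(\ol F))$, whose connected components are indexed by the irreducible factors $\ol F_i$; the local ring at the point $P_i$ determined by $\ol F_i$ is $\mc{O}_{\BP^1_{\BF_p},P_i}/(\ol F_i^{e_i})$, which has length $e_i$ over its residue field $\BF_{p^{f_i}}$. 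If you invoke this, the gap closes and your computation $N_{\ol R_{F,i}/\BF_p}(u) = N_{\BF_{p^{f_i}}/\BF_p}(\alpha)^{e_i}$ (and hence the ``iff some $e_i$ odd'' conclusion) goes through. Without it, a reader cannot see why the exponent in your norm formula is $e_i$ and not some other integer, so the claimed equivalence is unsupported. Note also that the dimension count is a genuine input: what matters for the final parity argument is precisely that the $\BF_{p^{f_i}}$-rank of $\ol R_{F,i}$ \emph{equals} $e_i$, not merely that it has the right sum over $i$.

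For $p=2$ your argument is a fleshed-out version of the paper's one-line claim that $\BZ_2^{\times e_i}\subset N_{R_{F_i}/\BZ_2}(R_{F_i}^\times)$. Your two-step tower argument (totally ramified step via the $[L^\times:N(K^\times)]\mid e$ bound plus the gcd and valuation-matching reductions; unramified step via surjectivity of the residue norm) is correct and more self-contained than what the paper states. One small caution: the bound $[L^\times:N(K^\times)]\mid e$ for a not-necessarily-abelian totally ramified extension does require the observation that $\BQ_2^\times/N(K^\times)$ is controlled by the maximal abelian subextension, but your gcd argument only needs the divisibility, which holds regardless.
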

\begin{proof}[Proof of Lemma~\ref{lem-surjtrials}] 
  Suppose $p > 2$. We claim that if the lemma holds for $F$, then it also holds for any separable degree-$n$ form $F' \in \BZ_p[x,y]$ such that $F' \equiv F \pmod p$. To see why, let $m_\kappa$ be the matrix of multiplication by $\kappa \in K_F^\times$ on the $K$-vector space $K_F$ with respect to the basis~\eqref{eq-rfbasis}. If we express $\kappa$ in the basis~\eqref{eq-rfbasis} as $\kappa = \sum_{i = 1}^n \kappa_i\zeta_i$, then using the multiplication table~\eqref{eq-multtable}, one verifies that each matrix entry of $m_\kappa$ --- and hence also the norm $\on{N}(\kappa) = \det m_\kappa$ --- is a polynomial function in the $\kappa_i$ and $f_i$ with integer coefficients. Now,~\eqref{eq-normalize} is surjective if and only if the norm, viewed as a polynomial function in the $\kappa_i$ and $f_i$, represents a quadratic nonresidue modulo $p$, and if this happens for $F$, then it must also happen for the mod-$p$ translate $F'$.
 
 We next claim that we can choose $F' \equiv F \pmod p$ such that $R_{F'}$ is maximal. It suffices to prove this with $F$ replaced by $F_i$ for each $i$. If $e_i = 1$ for any $i$, then $R_{F_i}$ must be maximal, so suppose $e_i > 1$. Let $\widehat{F}_i \in \BZ_p[x,y]$ be any lift of $\ol{F}_i$ with $\deg \widehat{F}_i = \deg \ol{F}_i$. Then $F_i' \defeq \widehat{F}_i^{e_i} + p  y^{\deg F_i} \in \BZ_p[x,y]$ does the job when $i > 2$: indeed, taking $H = F_i'$, $H_1 = \widehat{F}_i$, and $H_2 = \widehat{F}_i^{e_i-1}$ in the generalized Dedekind's criterion, we see that $H_3 = y^{\deg F_i}$, meaning that $\gcd(\ol{H}_1, \ol{H}_2, \ol{H}_3) = 1$. Similarly, $F_1' \defeq \widehat{F}_1^{e_1} + p  x^{e_1}$ does the job when $i = 1$. We may thus assume $R_F$ is maximal.
 
We now claim that $F_i$ is irreducible for each $i$. Indeed, suppose $F_i$ is not irreducible, and let $\widehat{F}_i \in \BZ_p[x,y]$ be any lift of $\ol{F}_i$. Then $F_i = (\widehat{F}_i^a + p  J_1)  (\widehat{F}_i^b + p  J_2)$ for some forms $J_1, J_2 \in \BZ_p[x,y]$ and integers $a,b > 0$ such that $a + b = e_i$. It follows that $F_i \in (p^2, p  \widehat{F}_i, \widehat{F}_i^2) \subset \BZ_p[x,y]$, so $F_i$ fails the generalized Dedekind's criterion as stated in Lemma~\ref{lem-rededekind}, with $x$ and $y$ switched. Thus, $R_{F_i}$ is not maximal.

As the $F_i$ are irreducible, $K_{F_i}$ is a field extension of $\BQ_p$ with ramification degree $e_i$ for each $i$, and the mod-$p$ reduction of the image of the norm map $\on{N}_{R_{F_i}/\BZ_p} \colon R_{F_i}^\times \to \BZ_p^\times$ is equal to $(\BZ/p\BZ)^{\times e_i}$ for each $i$. 
Thus,~\eqref{eq-normalize} is surjective if and only if $e_i$ is odd for some $i$, which occurs if and only if $F$ is not evenly ramified. 
When $p = 2$, we have $\BZ_2^{\times e_i} \subset \on{N}_{R_{F_i}/\BZ_2}(R_{F_i}^\times)$, so~\eqref{eq-normalize} is surjective if $e_i$ is odd for some $i$. This completes the proof of Lemma~\ref{lem-surjtrials}.
\end{proof}
Finally, the characterization of the stabilizer follows from Proposition~\ref{prop-stabs} upon observing that $\on{End}_{R_F}(I)^\times[2] = R_F^\times[2]$ when $I$ is invertible. This completes the proof of Theorem~\ref{thm-replace}.
\end{proof}

\subsubsection{Density calculations} \label{sec-densitycalcs}

As is evident from~\eqref{eq-orboverzpnum}, the size $\#\mathsf{orb}_{F,r}(H_{F,r}^*)$ depends on whether $F$ is evenly ramified, which can only occur when $n$ is even. The following theorem tells us how often forms in $\mc{F}_{n,\max}(f_0, \BZ_p)$ are evenly ramified:
\begin{theorem} \label{thm-evendensecalcs}
Let $n$ be even. The $p$-adic density within $\mc{F}_{n,\max}(f_0, \BZ_p)$ of the subset of forms $F \in \mc{F}_{n,\max}(f_0, \BZ_p)$ such that $F$ is evenly ramified is given by
\begin{equation} \label{eq-sprimedense}
\begin{cases} p^{-\frac{n}{2}}  (1 + p^{-1})^{-1}, & \text{ if $\nu_p(f_0) \in \{0,1\}$,} \\
0, & \text{ if $\nu_p(f_0) > 1$.}\end{cases}
\end{equation}
\end{theorem}
\begin{proof}
Let $S \defeq \mc{F}_{n,\max}(f_0, \BZ_p)$, and let $S'$ be the subset of $S$ consisting of evenly ramified forms. For a set $\Sigma \subset \mc{F}_n(f_0, \BZ_p)$, let $\delta_\Sigma$ denote the $p$-adic density of $\Sigma$ in $\mc{F}_n(f_0, \BZ_p)$. Observe that to prove~\eqref{eq-sprimedense}, it suffices to compute  $\delta_{S'}/\delta_{S}$. We compute the densities $\delta_{S}$ and $\delta_{S'}$ as follows:

\subsubsection*{Computation of $\delta_{S}$} 

First suppose $\nu_p(f_0) = 0$. Then $\delta_{S}$ is equal to the $p$-adic density of monic polynomials $f \in \BZ_p[x]$ of degree $n$ such that the ring $\BZ_p[x]/(f)$ is maximal. This latter density can be computed for any degree $n$ (not just even $n$) using Dedekind's criterion (see~\cite[Proposition~3.5]{MR2367325}) and is given by $1$ when $n \leq 1$ and by $1 - p^{-2}$ when $n \geq 2$.

Next, take $F \in S$, and suppose that $\nu_p(f_0) > 0$. Then we have $\min\{e_1, \nu_p(f_0)\} = 1$ by Lemma~\ref{lem-d}. Suppose first that $\nu_p(f_0) = 1$. In this case, the exponent $e_1$ can be any number from $1$ up to $n$, and the coefficient $\kappa$ of $y^{e_1}$ in the reduction of $F_1$ modulo $p$ can be any element of $\BF_p^\times$. Summing the $p$-adic density of forms in $\mc{F}_{n-e_1,\max}(1, \BZ_p)$ over the possible values of $e_1,\,\kappa$, we find that
\begin{align} \label{eq-deltaS2}
\delta_{S} & =  \sum_{ \kappa \in \BF_p^\times} \bigg(p^{-n} + p^{1-n} + \sum_{\ell = 1}^{n-2} p^{-\ell}   (1 - p^{-2})\bigg)  = 1 - p^{-2},
\end{align}
where in the sum over $\kappa$ on the right-hand side of~\eqref{eq-deltaS2}, the first term corresponds to forms such that $e_1  = n$, the second term corresponds to forms such that $e_1 = n-1$, and the $\ell^{\text{th}}$ term in the inner sum corresponds to forms such that $e_1 = \ell$ for each $\ell \in \{1, \dots, n-2\}$.

Now, suppose that $\nu_p(f_0) > 1$. Then $e_1 = 1$, so $R_{F_1}$ is automatically maximal, and $R_F$ is maximal if and only if $R_{F/F_1}$ is maximal, which occurs with probability $1$ when $n = 2$ and with probability $1 - p^{-2}$ when $n > 2$. Thus, upon summing over the possible values of $\kappa$, we find that
$$\delta_{S} = \begin{cases} 
\sum_{\kappa \in \BF_p^\times} p^{-1} = 1 - p^{-1}, & \text{ if $n = 2$,} \\ \sum_{\kappa \in \BF_p^\times} p^{-1} (1 - p^{-2}) = (1-p^{-1})  (1 - p^{-2}), & \text{ if $n > 2$.}  \end{cases}$$

\subsubsection*{Computation of $\delta_{S'}$}

For any integer $m \geq 1$, let $\delta_{2m}$ denote the $p$-adic density in $\mc{F}_{2m,\max}(1, \BZ_p)$ of forms $H$ such that $H$ is a perfect square modulo $p$.

If $\nu_p(f_0) = 0$, then $e_1 = 0$, so in this case
\begin{equation} \label{eq-monoidal}
\delta_{S'} = \delta_n.
\end{equation}
Next, take $F \in S'$, and suppose that $\nu_p(f_0) > 0$. Then we once again have by Lemma~\ref{lem-d} that $\min\{e_1, \nu_p(f_0)\} = 1$. Suppose first that $\nu_p(f_0) = 1$. In this case, the exponent $e_1$ can be any even number from $2$ up to $n$, and the coefficient $\kappa$ of $y^{e_1}$ in the reduction of $F_1$ modulo $p$ can be any element of $\mathbb{F}_p^\times$. Summing the $p$-adic density $\delta_{2n+1-\ell}$ of forms in $\mc{F}_{n-e_1,\max}(1, \BZ_p)$ that are squares modulo $p$ over the possible values of $e_1,\,\kappa$, we find that
\begin{equation} \label{eq-subeqognot}
\delta_{S'} = \sum_{\kappa \in \BF_p^\times} \sum_{\substack{2 \leq e_1 \leq n \\ e_1 \equiv 0 \,\text{mod } 2}} p^{-e_1}  \delta_{n-e_1}.
\end{equation}
Now, suppose that $\nu_p(f_0) > 1$. Then $e_1 = 1$, so $F$ cannot be evenly ramified. Thus, $\delta_{S'} = 0$ \mbox{in this case.}

It now remains to compute the $p$-adic densities $\delta_{2m}$, which we do in the following proposition. Our method of proof follows Lenstra's beautiful computation (see~\cite[\S3]{MR2367325}) of the $p$-adic density of monic polynomials $f \in \BZ_p[x]$ such that $\BZ_p[x]/(f)$ is the maximal order in $\BQ_p[x]/(f)$.
\begin{proposition} \label{prop-inash}
We have that $\delta_0 = 1$ and $\delta_{2m} = p^{-m}  (1 - p^{-1})$ for $m \geq 1$.
\end{proposition}
\begin{proof}[Proof of Proposition~\ref{prop-inash}]
The case $m = 0$ is obvious, so fix $m \geq 1$. Because the criterion for maximality stated in Lemma~\ref{lem-rededekind} depends only on the residue class of $H$ in $\mc{P} \defeq (\BZ/p^2\BZ)[x,y]$, it suffices to work modulo $p^2$, where the ideal $(p^2, p  H', {H'}^2) \subset \BZ_p[x,y]$ reduces to the ideal $(p  H', {H'}^2) \subset (\BZ/p^2\BZ)[x,y]$. To apply Lemma~\ref{lem-rededekind}, we need to introduce some notation. For each $i \in \{0, \dots, 2m\}$ and any $H' \in \mc{P}$:
\begin{itemize}[leftmargin=15pt,itemsep=0pt]
\item Let $\mc{P}_i \subset \mc{P}$ be the subset of binary forms on $\mc{P}$ having degree at most $i$, and let $\ol{\mc{P}}_i$ be the reduction of $\mc{P}_i$ modulo $p$;
\item Let $\mc{P}_{i, \mathsf{mon}} \subset \mc{P}_i$ be the subset of monic binary forms having degree equal to $i$, and let $\ol{\mc{P}}_{i, \mathsf{mon}}$ be the reduction of $\mc{P}_{i,\mathsf{mon}}$ modulo $p$;
\item Let $\mc{P}_{i,\mathsf{mon}}^{(2)} \subset \mc{P}_{i,\mathsf{mon}}$ be the subset of forms whose reductions modulo $p$ are perfect squares ($\mc{P}_{i,\mathsf{mon}}^{(2)} \neq \varnothing$ if and only if $i$ is even), and let $\ol{\mc{P}}_{i, \mathsf{mon}}^{(2)}$ be the reduction of $\mc{P}_{i, \mathsf{mon}}^{(2)}$ modulo $p$;
\item Let $J_{H'} = (p  H', {H'}^2)\cap \mc{P}_{2m, \mathsf{mon}}^{(2)} \subset \mc{P}$.
\end{itemize}
\begin{lemma} \label{lem-prop34}
Let $H'_1, \dots, H'_\ell \in \mc{P}$ be such that their reductions modulo $p$ are irreducible and distinct, and let $H' = \prod_{i = 1}^\ell H_i'$. Then
we have that
$$\frac{\#\left(\bigcap_{i = 1}^\ell J_{H'_i}\right)}{\#\mc{P}_{2m, \mathsf{mon}}} = \begin{cases} 0, & \text{if $\deg H' > m$,} \\ p^{-m-2  \deg H'}, & \text{otherwise.} \end{cases}$$
\end{lemma}
\begin{proof}[Proof of Lemma~\ref{lem-prop34}]
It is clear that $\#\mc{P}_{2m,\mathsf{mon}} = p^{4m}$. One checks by following the proof of~\cite[Lemma~3.3]{MR2367325} that $\bigcap_{i = 1}^\ell J_{H'_i} = J_{H'}$, so it remains to compute $\#J_{H'}$. By definition, $J_{H'}$ contains no monic elements and is thus empty when $\deg {H'}^2 = 2  \deg H' > 2m$, so $\#J_{H'} = 0$ when $\deg H' > m$.

Now suppose that $\deg H' \leq m$. Consider the map of sets
\begin{equation} \label{eq-bij}
\ol{\mc{P}}_{2m-2  \deg H', \mathsf{mon}}^{(2)} \times \ol{\mc{P}}_{2m - \deg H'-1} \to J_{H'}
\end{equation}
defined in the following manner: for each pair $(\ol{H}'', \ol{H}''') \in \ol{\mc{P}}_{2m-2  \deg H', \mathsf{mon}}^{(2)} \times \ol{\mc{P}}_{2m-\deg H'-1}$, choose a lift $(H'', H''') \in \mc{P}_{2m-2  \deg H', \mathsf{mon}}^{(2)} \times \mc{P}_{2m-\deg H'-1}$, and let the image of $(\ol{H}'', \ol{H}''')$ be given by $H''  {H'}^2 + p  H'''  H'$. By imitating~\cite[proof of Proposition~3.4]{MR2367325}, one checks that the map in~\eqref{eq-bij} is a bijection, so we have
$$\#J_{H'} = \big(\# \ol{\mc{P}}_{2m-2  \deg H', \mathsf{mon}}^{(2)}\big) \big(\#\ol{\mc{P}}_{2m - \deg H'-1}\big) = p^{m - \deg H'}  p^{2m - \deg H'}. \qedhere$$
\end{proof}
By the Principle of Inclusion-Exclusion, we deduce from Lemma~\ref{lem-prop34} that
\begin{equation} \label{eq-pie}
\delta_{2m} = p^{-m}  \sum_{\ell \geq 0} \sum_{\{\ol{H}_1', \dots, \ol{H}_\ell'\}} (-1)^\ell  p^{-2  \sum_{i = 1}^\ell \deg \ol{H}_i'},
\end{equation}
where the inner sum in~\eqref{eq-pie} is taken over sets $\{\ol{H}_1', \dots, \ol{H}_\ell'\} \subset (\BZ/p\BZ)[x,y]$ of monic irreducible binary forms such that $2  \sum_{i = 1}^\ell \deg \ol{H}_i' \leq 2m$. A computation reveals that the double sum in~\eqref{eq-pie} is none other than the coefficient of $t^{2m}$ in the power series expansion of the generating function
$$K(t) = (1-t)^{-1}  \prod_{\ol{H}'} \left(1 - \frac{t^{2  \deg H'}}{p^{-2  \deg H'}}\right).$$
Let $\zeta_{\BA_{\BF_p}^1}(s)$ be the $\zeta$-function of the variety $\BA_{\BF_p}^1 = \Spec \BF_p[x]$, and recall that $\zeta_{\BA_{\BF_p}^1}(s) = (1-p  s)^{-1}$. Expanding $\zeta_{\BA_{\BF_p}^1}(s)$ into its Euler product reveals that
\begin{equation} \label{eq-finalk}
K(t) = (1-t)^{-1}  \zeta_{\BA_{\BF_p}^1}(p^{-2}  t^2)^{-1} =  (1-t)^{-1}  (1 - p^{-1}  t^{2}).
\end{equation}
Since the coefficient of $t^{2m}$ on the right-hand side of~\eqref{eq-finalk} is equal to $1 - p^{-1}$, it follows from~\eqref{eq-pie} that $\delta_{2m} = p^{-m}  (1 - p^{-1})$, as desired.
\end{proof}

Substituting the result of Proposition~\ref{prop-inash} into~\eqref{eq-monoidal} and~\eqref{eq-subeqognot} yields that $\delta_{S}' = p^{-\frac{n}{2}}  (1 - p^{-1})$ when $\nu_p(f_0) \in \{0, 1\}$. This completes the proof of Theorem~\ref{thm-evendensecalcs}.
\end{proof}
 
\subsection{Orbits over fields} \label{sec-orbzoerfields}

When $R = K$ is a field, the parametrization developed in \S\ref{sec-bigconstruct} simplifies considerably. Let $F \in \mc{F}_n(f_0, K)$ be separable, let $r \in K^\times$, and let $(I,\alpha) \in H_{F,r} = H_{F,r}^*$. Over $K$, the data of a based fractional ideal of $R_F = K_F$ is encoded in its basis, so the condition that $I^2 \subset \alpha  I_F^{n-2}$ holds trivially in this setting. Since the parametrization is only concerned with based fractional ideals up to multiplication by elements of $K_F^\times$ and change-of-basis by an element of $G_n(K)$, we may replace every instance of the based fractional ideal $I$ with $|\on{N}(I)|$. But the value of $\alpha \in K_F^\times$ determines $|\on{N}(I)|$ via the condition that $\on{N}(I)^2 = r  \on{N}(\alpha)  \on{N}(I_F^{n-2})$, so we may unambiguously express the pair $(I,\alpha)$ as $(K_F,\alpha)$. Moreover, the conditions~\eqref{eq-midthm} hold trivially because $f_0$ is a unit in $K$. We thus obtain the following result:
\begin{proposition}
\label{prop-rationalcalc}
 Let $F$ be as above. The assignment $(K_F,\alpha) \mapsto f_0^n  \alpha$ defines a bijection between the elements of the set $H_{F,r}$ $($which parametrizes $G_n(K)$-orbits of pairs $(A,B) \in K^2 \otimes_K \Sym_2 K^n$ such that \mbox{$\on{inv}(x  A + y  B) = r  F_{\mathsf{mon}}(x,y)${}$)$} and those of the set $(K_F^\times/K_F^{\times2})_{\on{N}\equiv r}$. The stabilizer in $G_n(K)$ of such an orbit is isomorphic to $K_F^\times[2]_{\on{N}\equiv1}$ if $n$ is odd and $K_F^\times[2]$ if $n$ is even.
\end{proposition}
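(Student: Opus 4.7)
The plan is to specialize the machinery of \S\ref{sec-bigconstruct} to the field setting, where several technicalities collapse, and then match pairs $(K_F,\alpha)$ with classes in $K_F^\times/K_F^{\times 2}$. First, since $K$ is a field and $F$ is separable, $R_F = K_F$ is a product of fields; every fractional ideal of $K_F$ is simply $K_F$ equipped with a chosen $K$-basis; the containment $I^2 \subset \alpha \cdot I_F^{n-2}$ is automatic; and the integrality conditions~\eqref{eq-midthm} of Theorem~\ref{thm-cutout} hold trivially because $f_0 \in K^\times$. Consequently the map $\mathsf{orb}_{F,r}$ of Proposition~\ref{prop-1to1} is bijective onto the full set of $G_n(K)$-orbits of pairs $(A,B)$ with $\on{inv}(x\cdot A+z\cdot B) = r \cdot F_{\mathsf{mon}}(x,z)$, and the problem reduces to identifying $H_{F,r}$ with $(K_F^\times/K_F^{\times 2})_{\on{N} \equiv r}$.

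Next I will check that the assignment $(K_F,\alpha) \mapsto f_0^n \cdot \alpha$ has image in $(K_F^\times/K_F^{\times 2})_{\on{N}\equiv r}$. Using $\on{N}(I_F^{n-2}) = f_0^{-(n-2)}$ together with the norm constraint $\on{N}(I)^2 = r \cdot \on{N}(\alpha) \cdot \on{N}(I_F^{n-2})$, one computes
\[
\on{N}(f_0^n \cdot \alpha) = f_0^{n^2} \cdot \on{N}(\alpha) = f_0^{(n+2)(n-1)} \cdot r^{-1} \cdot \on{N}(I)^2.
\]
Since $(n+2)(n-1)$ is always even (one factor is even irrespective of the parity of $n$), the right-hand side equals $r$ times a square in $K^\times$, as desired. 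Well-definedness under the equivalence on $H_{F,r}$ is immediate: if $(K_F,\alpha_1) \sim (K_F,\alpha_2)$ via $\kappa \in K_F^\times$, then $\alpha_1 = \kappa^2 \alpha_2$, so $f_0^n \alpha_1$ and $f_0^n \alpha_2$ represent the same class in $K_F^\times/K_F^{\times 2}$.

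For injectivity, if $f_0^n \alpha_1 \equiv f_0^n \alpha_2 \pmod{K_F^{\times 2}}$, write $\alpha_1 = \kappa^2 \alpha_2$ and rescale the second pair by $\kappa$; this reduces to the case $\alpha_1 = \alpha_2$, and the norm constraint then forces $\on{N}(I_1) = \pm\on{N}(I_2)$, so the two bases of $K_F$ differ by an element of $\on{GL}_n(K)$ of determinant $\pm 1$, and hence by an element of $G_n(K)$ (for odd $n$, the sign ambiguity is absorbed because $-\id$ acts trivially on $\Sym_2 K^n$, so one may work with $\on{SL}_n^{\pm}(K)$ in place of $\on{SL}_n(K)$). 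For surjectivity, given any $\beta \in K_F^\times$ with $\on{N}(\beta) \equiv r \pmod{K^{\times 2}}$, set $\alpha = f_0^{-n}\beta$; the same parity observation shows the required relation $\on{N}(I)^2 = r \cdot \on{N}(\alpha) \cdot f_0^{-(n-2)}$ can be satisfied by choosing any $K$-basis of $K_F$ of the appropriate norm, producing a preimage in $H_{F,r}$.

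The stabilizer assertion follows directly from Proposition~\ref{prop-stabs}, once one observes that $\on{End}_{R_F}(K_F) = K_F$ and hence $\on{End}_{R_F}(I)^\times[2] = K_F^\times[2]$; the determinant-$1$ condition isolates the norm-$1$ subgroup when $n$ is odd. The only delicate point is the basis-norm bookkeeping underlying the injectivity and surjectivity checks, but no serious obstacle arises because bases over a field can be chosen freely and the relevant parity always works out.
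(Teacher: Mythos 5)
Your proof is correct and takes essentially the same route the paper does: specialize the construction of \S\ref{sec-bigconstruct} to the field setting, observe that the based-ideal datum collapses to the class of $\alpha$ in $K_F^\times/K_F^{\times 2}$, and identify stabilizers via Proposition~\ref{prop-stabs}; the paper merely presents this argument much more tersely in the paragraph preceding the proposition. (A very small phrasing point: your norm computation actually produces $r^{-1}$ times a square, but this is of course the same class as $r$ in $K^\times/K^{\times 2}$, so your conclusion is right.)
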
 

\subsection{Distinguished orbits} \label{sec-datsdist}

Let $R = \BZ$. In \S\ref{sec-diffsqs}, we reduced the problem of counting $2$-torsion classes of $R_F$ to that of computing $\#\mathsf{orb}_{F,\pm1}(H_{F,\pm 1}^*)$. This amounts to determining asymptotics for the number of lattice points of bounded height in a fundamental set for the action of $G_n(\BZ)$ on $\BR^2 \otimes_\BR \Sym_2 \BR^n$ lying on the hypersurface of pairs $(A,B)$ where $\det A = \pm 1$. In~\cite{Siadthesis1,Siadthesis2}, Siad uses geometry-of-numbers arguments to obtain such asymptotics, albeit for the number of so-called \emph{non-distinguished} orbits of bounded height. In \S\ref{sec-thedefdist}, we define what it means for an orbit to be \emph{distinguished} (cf. \S\ref{sec-earlybird}); in \S\S\ref{sec-uniquedistinged}--\ref{sec-exist333} we use algebraic arguments to establish when the distinguished orbit has an integral representative arising from the parametrization, and to determine when such an integral representative is unique. We also determine the squareful densities for the family $\mc{F}_{n,\max}(f_0,\BZ)$ (see Theorem~\ref{thm-denscalc}), and we prove Theorem~\ref{thm-diffsquare}.

\subsubsection{The definition} \label{sec-thedefdist}

Let $K$ be a field, let $F \in \mc{F}_n(f_0,K)$ be separable, and let $r \in K^\times$. We say that the $G_n(K)$-orbit of a pair $(A,B) \in K^2 \otimes_K \Sym_2 K^n$ satisfying $\on{inv}(x  A + y  B) = r  F_{\mathsf{mon}}(x,y)$ is \emph{distinguished} if it corresponds under the bijection given by Proposition~\ref{prop-rationalcalc} to $1 \in K_F^\times/K_F^{\times 2}$ when $n$ is odd and to $1 \in K_F^\times/K_F^{\times2}K^\times$ when $n$ is even. One verifies as in~\cite[\S4.1]{MR3156850} (resp.,~\cite[\S2.2]{MR3719247}) that when $n$ is odd (resp., even), a pair $(A,B) \in K^2 \otimes_K \Sym_2 K^n$ belongs to the distinguished $G_n(K)$-orbit if and only if (the symmetric bilinear forms associated to) $A$ and $B$ share a maximal isotropic subspace defined over $K$ (resp., $B$ has an isotropic subspace of dimension $\frac{n-2}{2}$ defined over $K$ contained within a maximal isotropic \mbox{subspace for $A$ defined over $K$).}

\subsubsection{Uniqueness of integral representatives} \label{sec-uniquedistinged}

Let $R = \BZ$, let $F \in \mc{F}_{n,\on{max}}(f_0,\BZ)$, and let $r \in \{\pm 1\}$. To study distinguished $G_n(\BQ)$-orbits arising from elements of $H_{F,r}^*$ via the parametrization, it suffices to consider the case where $r = 1$. Indeed, when $n$ is odd, the map $(I,\alpha) \mapsto (I, -\alpha)$ defines a bijection between $H_{F,1}^*$ and $H_{F,-1}^*$, and when $n$ is even, any $(I,\alpha) \in H_{F,-1}^*$ satisfies the property that $\on{N}(\alpha) = -1 \in \BQ^\times/\BQ^{\times 2}$, so by Proposition~\ref{prop-rationalcalc}, the $G_n(\BQ)$-orbit containing $\mathsf{orb}_{F,-1}(I,\alpha)$ cannot possibly correspond to $1 \in K_F^\times/K_F^{\times 2}\BQ^\times$.

The following proposition tells us how many elements of $H_{F,1}^*$ give rise to the distinguished $G_n(\BQ)$-orbit, if such elements exist at all:

\begin{proposition} \label{prop-irf2}
Let $F$ be as above. We have the following two points:
\begin{itemize}[leftmargin=15pt,itemsep=0pt]
\item Let $n$ be odd. If there exists an element $(I,\alpha) \in H_{F,1}^*$ such that the $G_n(\BQ)$-orbit containing $\mathsf{orb}_{F,1}(I,\alpha)$ is distinguished, there is precisely one such element. 
\item Let $n$ be even, and let $\mathsf{ev}_F \defeq \{\text{primes $p$}: \text{$F$ is evenly ramified modulo $p$}\}$. If there exists an element $(I,\alpha) \in H_{F,1}^*$ such that the $G_n(\BQ)$-orbit containing $\mathsf{orb}_{F,1}(I,\alpha)$ is distinguished, there are at most $2^{\#\mathsf{ev}_F+1}$ such elements, with equality \mbox{if $K_F$ has no quadratic subfields.}
\end{itemize}
\end{proposition}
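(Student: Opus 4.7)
The plan is to handle the odd and even cases separately, reducing each to a counting problem for fractional ideal equations in $R_F$, which is a product of Dedekind domains since we assumed $R_F$ is maximal.

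\emph{Odd case.} Suppose $(I_1, \alpha_1), (I_2, \alpha_2) \in H_{F,1}^*$ both map to the distinguished $G_n(\BQ)$-orbit. By Proposition~\ref{prop-rationalcalc}, this forces $f_0^n \alpha_1 \equiv f_0^n \alpha_2 \equiv 1 \pmod{K_F^{\times 2}}$, so $\alpha_1 = \kappa^2 \alpha_2$ for some $\kappa \in K_F^\times$. Using the equivalence $(I_2, \alpha_2) \sim (\kappa I_2, \alpha_1)$, I reduce to $\alpha_1 = \alpha_2 = \alpha$, whence $I_1^2 = \alpha I_F^{n-2} = I_2^2$. Since $R_F$ is Dedekind, squaring is injective on fractional ideals, forcing $I_1 = I_2$ as fractional ideals. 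The two possibly distinct orientations of $I_1 = I_2$ as based ideals modulo $\on{SL}_n(\BZ)$ can then be identified via the equivalence $\kappa = -1 \in R_F^\times[2]$: since $\on{N}(-1) = (-1)^n = -1$ for $n$ odd, this change of basis has determinant $-1$ and reverses orientation, while $(-1)^2 = 1$ preserves $\alpha$. Hence the two classes in $H_{F,1}^*$ coincide.

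\emph{Even case.} Now $f_0^n \in \BQ^{\times 2}$, so the distinguished condition $f_0^n \alpha \in K_F^{\times 2} \BQ^\times$ becomes simply $\alpha \in K_F^{\times 2} \BQ^\times$. Writing $\alpha = \beta^2 q$ with $\beta \in K_F^\times$ and $q \in \BQ^\times$, the rescaling $(I, \alpha) \sim (\beta^{-1} I, q)$ reduces every distinguished class to one with $\alpha = q \in \BQ^\times$, where $I$ is determined up to basis by the equation $I^2 = q \cdot I_F^{n-2}$. Since $I_F^{n-2} = (I_F^{(n-2)/2})^2$ is already a perfect square of fractional ideals (as $n$ is even), unique factorization in $R_F$ shows that this equation admits a (necessarily unique) fractional ideal solution iff $v_p(q)$ is even at every rational prime $p$ for which some prime $\mathfrak{p} \mid p$ of $R_F$ has odd ramification index, i.e., iff $v_p(q) \equiv 0 \pmod 2$ for every $p \notin \mathsf{ev}_F$. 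Let $S \subset \BQ^\times$ denote this subgroup; then $S/\BQ^{\times 2}$ is generated by $-1$ and the primes in $\mathsf{ev}_F$, giving $|S/\BQ^{\times 2}| = 2^{\#\mathsf{ev}_F + 1}$.

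To conclude, I will check that two pairs $(I_{q_1}, q_1), (I_{q_2}, q_2)$ with $q_i \in S$ are equivalent in $H_{F,1}^*$ iff $q_1/q_2 \in \BQ^\times \cap K_F^{\times 2}$: the second component of the equivalence relation forces $\kappa^2 = q_1/q_2 \in \BQ^\times$, and once such $\kappa \in K_F^\times$ exists, the identity $I_{q_1} = \kappa I_{q_2}$ follows from $I_{q_1}^2 = \kappa^2 I_{q_2}^2$ by Dedekindness. Hence the number of distinguished classes is $|S/(\BQ^\times \cap K_F^{\times 2})| \leq |S/\BQ^{\times 2}| = 2^{\#\mathsf{ev}_F + 1}$, with equality iff $\BQ^\times \cap K_F^{\times 2} = \BQ^{\times 2}$. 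The latter equality holds precisely when every rational $d$ whose square root lies in $K_F$ already satisfies $d \in \BQ^{\times 2}$, i.e., when $K_F$ contains no quadratic subfield (since any nontrivial such $d$ would give a nontrivial embedding $\BQ(\sqrt{d}) \hookrightarrow K_F$).

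The main obstacle is the prime-by-prime identification of $S$, which requires both the explicit structure of $I_F^{n-2}$ as a square of fractional ideals in the even case and a careful translation between even ramification of a rational prime $p$ (as defined locally in \S\ref{sec-zorbs}) and the parities of the ramification indices of the primes $\mathfrak{p} \mid p$ in $R_F$; the factor of $2$ in $2^{\#\mathsf{ev}_F + 1}$ coming from the sign of $q$ is easy to miss. The remaining passage from the orbit count to the quotient $|S/(\BQ^\times \cap K_F^{\times 2})|$ is straightforward once the reduction to $\alpha \in \BQ^\times$ is in hand.
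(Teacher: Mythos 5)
Your proof is correct and follows essentially the same approach as the paper's: normalize $\alpha$, invoke unique factorization of fractional ideals in the maximal (hence Dedekind) order $R_F$, and in the even case count rational $q$'s modulo $\BQ^\times \cap K_F^{\times 2}$ whose prime support lies in $\mathsf{ev}_F$. Your write-up is somewhat more explicit than the paper's terse argument, notably in addressing the $\on{SL}_n(\BZ)$-orientation issue via $\kappa = -1$ in the odd case and in translating even ramification into parities of ramification indices, but these are fleshed-out details of the same proof rather than a different route.
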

\begin{proof}
Let $n$ be odd, and suppose there exist two distinct fractional ideals $I$ and $I'$ of $R_F$ such that both $(I,1)$ and $(I',1)$ are elements of $H_{F,1}^*$. Then, since $R_F$ is maximal, we have $I^2 = I_F^{n-2} = {I'}^2$, implying $I = I'$.

Next, let $n$ be even. Observe that we have the equality
\begin{equation} \label{eq-needtocitethequotient}
\ker\big(K_F^\times/K_F^{\times2} \to K_F^\times/K_F^{\times2}\BQ^\times\big) = \BQ^\times/(\BQ^\times \cap K_F^{\times2}),
\end{equation}
From~\eqref{eq-needtocitethequotient}, we see that if $(I,\alpha) \in H_{F,1}^*$ gives rise to the distinguished $G_n(\BQ)$-orbit, we can rescale $(I,\alpha)$ so that $|\alpha| \in \BQ^\times$ is a product of distinct primes. But then $(\alpha) = \big(II_F^{\frac{2-n}{2}}\big)^2$, so each prime $p \in \BZ$ dividing $\alpha$ has the property that every prime of $R_F$ lying above $p$ has even ramification degree. By~\cite[Theorem~1 and Corollary~1]{MR2188842}, the primes of $R_F$ lying above $p$ are in bijection with the distinct irreducible factors of the reduction of $F$ modulo $p$, and this bijection preserves ramification degree. Thus, every prime dividing $\alpha$ must lie in the set $\mathsf{ev}_F$. The number of possibilities for $\alpha$ modulo $\BQ^\times \cap K_F^{\times 2}$ is at most $2^{\#\mathsf{ev}_F+1}$, with equality if $K_F$ has no quadratic subfields.
\end{proof}

\begin{remark}\label{rem-snfields}
The condition that $K_F$ has no quadratic subfields in the second point of Proposition~\ref{prop-irf2} is a mild one: indeed, this conditions holds for $100\%$ of forms $F \in \mc{F}_{n,\max}(f_0,\BZ)$, since such forms have Galois group isomorphic to the full symmetric group $100\%$ of the time.
\end{remark}

\subsubsection{Existence of integral representatives, part $(i)$}

Let $F \in \mc{F}_{n}(f_0,\BZ)$ be primitive. It now remains to determine precise conditions under which distinguished integral orbits arise via the map $\mathsf{orb}_{F,1}$. When $n$ is even, there \mbox{\emph{always}} exists a pair $(I,\alpha) \in H_{F,1}^*$ such that the $G_n(\BQ)$-orbit containing $\mathsf{orb}_{F,1}(I,\alpha)$ is distinguished: indeed, take $I = I_F^{\frac{n-2}{2}}$ and $\alpha = 1$. On the other hand, when $n$ is odd, it is \emph{often not} the case that there exists a pair $(I,\alpha) \in H_{F,1}^*$ such that the $G_n(\BQ)$-orbit containing $\mathsf{orb}(I,\alpha)$ is distinguished. The rest of this section is devoted to proving this assertion for odd $n$.

The $G_n(\BQ)$-orbit containing $\mathsf{orb}_{F,1}(I,\alpha)$ is distinguished if and only if the $G_n(\BQ_v)$-orbit containing $\mathsf{orb}_{F,1}(I,\alpha)$ is distinguished for every place $v$ of $\BQ$. In particular, if there exists a pair $(I, \alpha) \in H_{F,1}^*$ such that $\mathsf{orb}_{F,1}(I,\alpha)$ is distinguished over $\BQ$, then for every place $v$, there exists a pair $(I_v, \alpha_v) \in H_{F,1}^*$ over $\BZ_v$ such that $\mathsf{orb}_{F,1}(I_v,\alpha_v)$ is distinguished over $\BQ_v$ (here, $\BZ_v = \BR$ when $v = \infty$). The following proposition establishes the converse:
\begin{proposition} \label{prop-adelic}
Let $F$ be as above, and suppose that for each place $v$ of $\BQ$, there exists an invertible based fractional ideal $I_v$ of $R_F \otimes_\BZ \BZ_v$ such that $(I_v, f_0^{n}) \in H_{F,1}^*$ over $\BZ_v$. Then there exists an invertible based fractional ideal $I$ of $R_F$ such that $(I, f_0^{n}) \in H_{F,1}^*$ over $\BZ$.
\end{proposition}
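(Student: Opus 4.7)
The plan is to glue the given collection of local based fractional ideals $I_v$ into a global invertible based fractional ideal $I$ of $R_F$, and observe that the norm condition survives automatically. Write $J := f_0^{n-2} \cdot I_F^{n-2}$; the hypothesis supplies, for each place $v$ of $\BQ$, an invertible based fractional ideal $I_v$ of $R_F \otimes_\BZ \BZ_v$ with $I_v^2 = J \otimes_\BZ \BZ_v$ (equality rather than inclusion by invertibility) and $\on{N}(I_v)^2 = \on{N}(f_0^{n-2}) \cdot \on{N}(I_F^{n-2}) = f_0^{(n-1)(n-2)}$.

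First I would isolate the ``bad'' primes. Let $S$ be the (finite) set of primes $p$ such that either $p \mid f_0$ or $R_F$ is non-maximal at $p$. For any $p \nmid f_0$, the bases $\{1, \theta, \ldots, \theta^{n-2}, \zeta_{n-1}\}$ of $I_F^{n-2}$ from~\eqref{eq-idealdef} and $\{1, \theta, \ldots, \theta^{n-1}\}$ of $R_F$ are related via $\zeta_i = f_0 \theta^i + (\text{lower-order terms in } \theta)$; since $f_0 \in \BZ_p^\times$, the associated change-of-basis matrix has unit determinant, yielding $I_F^{n-2} \otimes_\BZ \BZ_p = R_F \otimes_\BZ \BZ_p$ and hence $J \otimes_\BZ \BZ_p = R_F \otimes_\BZ \BZ_p$. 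Moreover, for $p \notin S$ the ring $R_F \otimes_\BZ \BZ_p$ is a product of discrete valuation rings, so the $2$-torsion subgroup of $(K_F \otimes_\BQ \BQ_p)^\times / (R_F \otimes_\BZ \BZ_p)^\times$ is trivial (an element whose square is a unit has zero valuation in each DVR factor, hence is a unit). Combined with $I_p^2 = J \otimes_\BZ \BZ_p = R_F \otimes_\BZ \BZ_p$, this forces $I_p = R_F \otimes_\BZ \BZ_p$ as unbased fractional ideals for every $p \notin S$.

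Next I would glue: define the $R_F$-submodule
$$I := \{x \in K_F : x \in I_p \subset K_F \otimes_\BQ \BQ_p \text{ for every finite prime } p\} \subset K_F.$$
Because the local collection $(I_p)_p$ is trivial for every $p \notin S$, standard Zariski descent for invertible fractional ideals on the Noetherian affine scheme $\Spec R_F$ (equivalently, the adelic description of the group of invertible fractional ideals of $R_F$ as a restricted product of local invertible ideal groups modulo integral units) implies that $I$ is an invertible fractional ideal of $R_F$ with $I \otimes_\BZ \BZ_p = I_p$ for every finite prime $p$.

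To finish, by construction $(I \otimes_\BZ \BZ_p)^2 = I_p^2 = J \otimes_\BZ \BZ_p$ for every finite prime $p$, so $I^2 = J$ globally, since invertible fractional ideals of $R_F$ are determined by their localizations. Equipping $I$ with any $\BZ$-basis makes it a based fractional ideal, and the relation $\on{N}(I)^2 = \on{N}(J) = f_0^{(n-1)(n-2)}$ follows from $I^2 = J$ by multiplicativity of the norm; therefore $(I, f_0^{n-2}) \in H_{F,1}^*$ globally. The main obstacle is the descent step; once one has reduced the local data to being trivial outside a finite set, everything else amounts to routine gluing and bookkeeping.
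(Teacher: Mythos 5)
Your proof is correct, and it takes a genuinely different route from the paper's. The paper works on the matrix side of the parametrization: it fixes a rational representative $(A,B)$ of the distinguished $G_n(\BQ)$-orbit, uses the local hypothesis to produce $g_v \in G_n(\BQ_v)$ moving $(A,B)$ into the local pre-projective locus, and then invokes the class-number-one property of $G_n$ (i.e., $G_n(\BA_\BQ) = G_n(\BA_\BZ)\cdot G_n(\BQ)$) to glue the $g_v$ into a single $g'' \in G_n(\BQ)$, so that $g''\cdot(A,B)$ is the desired integral representative. You instead glue on the ideal side, observing that outside the finite set $S$ of bad primes (dividing $f_0$ or where $R_F$ is non-maximal) the local ideals $I_p$ are forced to be trivial --- the key step being that when $R_F \otimes_\BZ \BZ_p$ is a product of DVRs its fractional ideal group is torsion-free, so $I_p^2 = R_F \otimes_\BZ \BZ_p$ forces $I_p = R_F \otimes_\BZ \BZ_p$ --- and then appealing to the standard identification of $\mathcal{I}(R_F)$ with the restricted product of the local invertible-ideal groups. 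Your approach buys a somewhat more elementary argument: it replaces the class-number-one fact for $G_n$ with plain commutative-algebra gluing for fractional ideals of a one-dimensional order, and it makes transparent exactly where the hypotheses are used (the archimedean hypothesis is genuinely vacuous here, as you implicitly use by ignoring it). The paper's approach has the advantage of staying entirely within the orbit formalism that is used everywhere else in the argument, and of routing all the integrality checks through the already-proven Theorem~\ref{thm-cutout}. Two small points worth tightening in your write-up: your appeal to ``Zariski descent'' is a bit loose since you are working with completions rather than Zariski-local data, and the phrase ``modulo integral units'' is slightly ambiguous; what you are really invoking is that $\mathcal{I}(R_F)\cong \prod'_p \mathcal{I}(R_F\otimes_\BZ \BZ_p)$, where each factor is $(K_F\otimes_\BQ\BQ_p)^\times/(R_F\otimes_\BZ\BZ_p)^\times$. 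With that phrasing the gluing step is exactly the standard statement you need, and your verification of $I^2 = J$ globally (by checking it at each completion) and of the norm identity $\on{N}(I)^2 = f_0^{(n-1)(n-2)}$ (via multiplicativity of the absolute ideal norm and the parity of $(n-1)(n-2)$) then completes the argument cleanly.
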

\begin{proof}
Let $\BA_\BQ$ be the ring of ad\`{e}les, and let $\BA_\BZ \subset \BA_\BQ$ be the subring of integral ad\`{e}les. Recall that $G_n$ has class number $1$; i.e., for every $g \in G_n(\BA_\BQ)$, there exists $g' \in G_n(\BA_\BZ)$ and $g'' \in G_n(\BQ)$ such that $g = g'  g''$, where we view $G_n(\BQ)$ as the subring of principal ad\`{e}les of $G_n(\BA_\BQ)$.

Let $(A,B) \in \BQ^2 \otimes_\BQ \Sym_2 \BQ^n$ be an arbitrary fixed representative of the distinguished orbit of $G_n$ over $\BQ$. Notice that the base-change of $(A,B)$ from $\BQ$ to $\BQ_v$ lies in $\BZ_v \otimes_{\BZ_v} \Sym_2 \BZ_v^n$ for all but finitely many places $v$. The assumption that there exists an invertible based fractional ideal $I_v$ of $R_F \otimes_\BZ \BZ_v$ such that $(I_v, f_0^{n}) \in H_{F,1}^*$ over $\BZ_v$ implies that the distinguished orbit has a pre-projective representative over $\BZ_v$ arising from the parametrization for every place $v$. Thus, there exists $g_v \in G_n(\BQ_v)$ for each $v$ such that the following two conditions hold: (1) $g_v \in G(\BZ_v)$ for all but finitely many $v$; and (2) $g_v \cdot (A,B) \in \mathsf{orb}_{F,1}(I_v, f_0^{n})$. It follows that the list $g = (g_v)_v$ is an element of $G_n(\BA_\BQ)$. By the observation in the previous paragraph, there exist $g' \in G_n(\BA_\BZ)$ and $g'' \in G_n(\BQ)$ such that $g = g'  g''$. Combining our results, we find that
$$g \cdot (A,B) \in \BA_\BZ^2 \otimes_{\BA_\BZ} \Sym_2 \BA_\BZ^n \Longrightarrow  g'' \cdot (A,B) \in \BA_\BZ^2 \otimes_{\BA_\BZ} \Sym_2 \BA_\BZ^n.$$
As $g'' \in G_n(\BQ) \subset G_n(\BA_\BQ)$ and $(A,B) \in \BQ^2 \otimes_\BQ \Sym_2 \BQ^n \subset \BA_\BQ^2 \otimes_{\BA_\BQ} \Sym_2 \BA_\BQ^n$, we deduce that $g'' \cdot (A,B) \in \BZ^2 \otimes_\BZ \Sym_2 \BZ^n$. We must now show that there exists an invertible based fractional ideal $I$ of $R_F$ such that $g'' \cdot (A,B) \in \mathsf{orb}_{F,1}(I,f_0^{n})$. By Theorem~\ref{thm-cutout}, the $G_n(\BZ)$-orbit of $g'' \cdot (A,B)$ is an element of $\mathsf{orb}_{F,1}(H_{F,1}^*)$ if and only if $g'' \cdot (A,B)$ satisfies the congruence conditions in~\eqref{eq-midthm} as well as the congruence conditions that define pre-projectivity (see \S\ref{sec-diffsqs}), but this follows because we have that $g_v \cdot (A,B) \in \mathsf{orb}_{F,1}(I_v, f_0^{n})$ and that $(I_v, f_0^{n}) \in H_{F,1}^*$ for every place $v$.
\end{proof}

Proposition~\ref{prop-adelic} reduces the problem of ascertaining when the distinguished orbit arises globally to the problem of determining the conditions under which the distinguished orbit arises everywhere locally. In the rest of this section, we derive these local conditions.

When $R = K$ is a field and $F \in \mc{F}_n(f_0, K)$ is separable, Proposition~\ref{prop-rationalcalc} implies that there exists \mbox{$(A,B) \in K^2 \otimes_K \Sym_2 K^n$} with $\on{inv}(x  A + y  B) = F_{\mathsf{mon}}(x,y)$ and with distinguished $G_n(K)$-orbit; moreover, by Theorem~\ref{thm-cutout}, there exists $(I,\alpha) \in H_{F,1} = H_{F,1}^*$ such that $(A,B)$ represents the orbit $\mathsf{orb}_{F,1}(I,\alpha)$. In particular, when $v = \infty$, so that $R = K = \BR$, the distinguished orbit always arises via the parametrization.

Next, let $v = p$ be prime, and let $R = \BZ_p$. Let $n$ be odd, and let $F \in \mc{F}_{n}(f_0,\BZ)$ be primitive. We split our analysis of $p$-adic distinguished orbits into cases based on the parity of $\nu_p(f_0)$:

\subsubsection{Existence, part $(ii)$\emph{:} $n,\,\nu_p(f_0)$ are odd}

In this case, the next theorem tells us when the distinguished $G_n(\BQ_p)$-orbit contains an element of $\mathsf{orb}_{F,1}(H_{F,1}^*)$, assuming $R_F$ is maximal:
\begin{proposition} \label{prop-whosdist}
Let $\nu_p(f_0)$ be odd, and let $F \in \mc{F}_{n,\max}(f_0,\BZ_p)$. There exists $(I, \alpha) \in H_{F,1}^*$ such that the $G_n(\BQ_p)$-orbit containing $\mathsf{orb}_{F,1}(I,\alpha)$ is distinguished if and only if $F$ is squareful. 
\end{proposition}
\begin{proof}
Let $F = \prod_{i = 1}^m F_i$ be a canonical factorization of $F$. 
Suppose that $e_j$ is odd for some $j \in \{2, \dots, m\}$, and let $(I,\alpha) \in H_{F,1}^*$ be any element. For the forward direction, it suffices by Proposition~\ref{prop-rationalcalc} to show that $f_0^{n}  I_F$ is not a square as a fractional ideal of $R_F$.

Under the identification $R_F \simeq \prod_{i = 1}^m R_{F_i}$, the fractional ideal $I_F$ is identified with $\prod_{i = 1}^m I_{F_i}$, where $I_{F_i}$ is a fractional ideal of $R_{F_i}$ for each $i \in \{1, \dots, m\}$. 
Now, $I_{F_j} = (1)$ is the unit ideal because $F_j$ is monic, so the $j^{\text{th}}$ factor of $f_0^{n}  I_F$ is equal to the fractional ideal $f_0^{n}  I_{F_j} = (p^{\nu_p(f_0)  n})$, which is not a square. Indeed, if $(p^{\nu_p(f_0)  n})$ were a square, then $(p)$ would be a square because $\nu_p(f_0)$ and $n$ are odd, but this is impossible: by the proof of Lemma~\ref{lem-surjtrials}, $F_j$ is irreducible, so $K_{F_j}$ is a ramified extension of $\BQ_p$ with odd ramification degree $e_j$. As $f_0^n  I_{F_j}$ is not a square, neither is $f_0^n  I_F$.

For the reverse direction, suppose that $e_i$ is even for each $i \in \{2, \dots, m\}$. It then suffices to construct a based fractional ideal $I$ of $R_F$ such that $I^2 = f_0^{n}  I_F^{n-2}$ and such that $\on{N}(I)^2 = \on{N}(f_0^n)  \on{N}(I_F^{n-2})$. To do this, it further suffices to construct for each $i \in \{1, \dots, m\}$ a based fractional ideal $I_i$ of $R_{F_i}$ such that $I_i^2 = f_0^{n}  I_{F_i}^{n-2}$ and $\on{N}(I_i)^2 = \on{N}(f_0^{n})  \on{N}(I_{F_i}^{n-2})$. We first handle $i \in \{2, \dots, m\}$. For such $i$, we have that $K_{F_i}$ is a ramified extension of $\BQ_p$ with even ramification degree $e_i$, so $(p)$ is a square as an ideal of $R_{F_i}$. Thus, there exists an invertible fractional ideal $I_i$ of $R_{F_i}$ such that $I_i^2 = (p^{\nu_p(f_0)  n}) = f_0^{n}  I_{F_i}^{n-2}$. But $\on{N}(f_0^{n})  \on{N}(I_{F_i}^{n-2}) = f_0^{n  \deg F_i}$, which is a perfect square since $2 \mid e_i$ and $e_i \mid \deg F_i$. So, there exists a basis of $I_i$ with respect to which $\on{N}(I_i)^2 =\on{N}(f_0^n)  \on{N}(I_{F_i}^{n-2})$.

We now take $i = 1$. Note that $e_1$ must be odd since $e_j$ is even for each $j \in \{2, \dots, m\}$ and $n$ is odd. If $e_1 > 1$, Lemma~\ref{lem-d} tells us that the polynomial $F_1(1,x)$ is Eisenstein. Thus, in this case, $K_{F_1}$ is a totally ramified extension of $\BQ_p$ having degree $e_1$, and $\nu_p(f_0) = 1$. If $\mathfrak{p} \subset \mc{O}_{K_{F_1}}$ is the prime lying above $(p) \subset \BZ_p$, then $(\on{N}(\mathfrak{p})) = (p)$ as ideals of $\BZ_p$, so since $(\on{N}(I_{F_1})) = (p^{-\nu_p(f_0)}) = (p)^{-1}$, we conclude that there is a basis of $\mathfrak{p}^{-1}$ such that $I_{F_1} = \mathfrak{p}^{-1}$ as based fractional ideals of $R_{F_1}$. Therefore, when $e_1 > 1$, we have that $f_0^n  I_{F_1}^{n-2} = \mathfrak{p}^{(e_1-1)  n+2}$, which is a square, and when $e_1 = 1$, we have that $f_0^n  I_{F_1}^{n-2} = (f_0^2)$, which is also a square. Letting $I_1$ be an invertible fractional ideal of $R_{F_1}$ such that $I_1^2 = f_0^{n}  I_{F_1}^{n-2}$, the fact that $\on{N}(f_0^{n})  \on{N}(I_{F_1}^{n-2}) = f_0^{(e_1 -1) n+2}$ is a perfect square implies that we can choose a basis of $I_1$ with respect to which $\on{N}(I_1)^2 = \on{N}(f_0^{n})  \on{N}(I_{F_1}^{n-2})$.

This concludes the proof of Proposition~\ref{prop-whosdist}.
\end{proof}

In the next theorem, we compute the $p$-adic density of forms $F \in \mc{F}_{n,\max}(f_0, \BZ_p)$ satisfying the criterion for distinguishedness obtained in Proposition~\ref{prop-whosdist}:
\begin{theorem} \label{thm-denscalc}
Retain the setting of Proposition~\ref{prop-whosdist}. The $p$-adic density of the subset of squareful elements in $\mc{F}_{n, \max}(f_0, \BZ_p)$ is given by $p^{\frac{1-n}{2}}  (1 + p^{-1})^{-1}$.
\end{theorem}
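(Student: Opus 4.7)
The plan is to mimic the structure of the proof of Theorem~\ref{thm-evendensecalcs}: decompose each $F$ according to its canonical factorization $F = F_1 \cdot G$ over $\BZ_p$, where $F_1$ has leading coefficient $f_0$ and reduces modulo $p$ to $\kappa \cdot z^{e_1}$, and $G = F/F_1 \in \mc{F}_{n-e_1}(1, \BZ_p)$ is monic (with $\bar G$ automatically coprime to $z$ once $\deg G > 0$, since the $x^{\deg G}$-coefficient is $1$). Because $\bar F_1 = \kappa z^{e_1}$ and $\bar G$ are coprime in $\BF_p[x,z]$, Hensel's lemma implies that the multiplication map $(F_1, G) \mapsto F_1 \cdot G$ is a $\BZ_p$-analytic isomorphism from the parameter space of admissible pairs onto its image in $\mc{F}_n(f_0, \BZ_p)$, with unit $p$-adic Jacobian; consequently Haar measure factors as a product. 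In particular, for each $(e_1, \kappa)$ the density of forms with that canonical type is $p^{-e_1}$ (the conditions being $\bar f_i = 0$ for $1 \leq i < e_1$ and $\bar f_{e_1} = \kappa$), and conditional on the type, the distribution of $G$ is Haar on $\mc{F}_{n-e_1}(1, \BZ_p)$.

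Next I would catalog the admissible $(e_1, \kappa)$. By Lemma~\ref{lem-prodmax}, $R_F$ is maximal iff both $R_{F_1}$ and $R_G$ are, and $F$ squareful requires $e_i$ even for $i \geq 2$; combined with $n$ odd, this forces $e_1$ to be odd. Maximality of $R_{F_1}$ is controlled by Lemma~\ref{lem-d}: under the hypothesis that $\nu_p(f_0)$ is odd and positive, $R_{F_1}$ is maximal iff $\min\{\nu_p(f_0), e_1\} = 1$. This splits the analysis into Case A ($\nu_p(f_0) = 1$, with $e_1$ ranging over odd values in $\{1, 3, \ldots, n\}$) and Case B ($\nu_p(f_0) \geq 3$, forcing $e_1 = 1$). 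In either case, given that $R_{F_1}$ is maximal, the residual conditions ``$R_G$ maximal'' and ``$\bar G$ a perfect square modulo $p$'' together mean exactly that $G$ is a maximal monic form of degree $n - e_1$ (which is even) whose reduction is a perfect square. By Proposition~\ref{prop-inash}, this occurs with absolute density $\delta_{n - e_1}$, where $\delta_0 = 1$ and $\delta_{2m} = p^{-m}(1 - p^{-1})$ for $m \geq 1$.

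Assembling the contributions in Case A yields
\begin{equation*}
\delta_{\mathrm{sq,max}} \;=\; (p - 1) \sum_{\substack{1 \leq e_1 \leq n \\ e_1 \text{ odd}}} p^{-e_1} \cdot \delta_{n - e_1} \;=\; (p-1)\, p^{-n} + (p-1)(1 - p^{-1}) \sum_{j = 0}^{(n-3)/2} p^{-(2j + n + 1)/2},
\end{equation*}
and the geometric sum telescopes (the $p^{-n}$ contributions cancel) to produce the absolute density $p^{(1-n)/2}(1 - p^{-1})$. Dividing by the absolute density $\delta_{\max} = 1 - p^{-2}$ of $\mc{F}_{n,\max}(f_0, \BZ_p)$ --- which is computed exactly as in the proof of Theorem~\ref{thm-evendensecalcs}, independently of the parity of $n$ --- gives the claimed conditional density $p^{(1-n)/2}(1 + p^{-1})^{-1}$. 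In Case B only the $e_1 = 1$ term survives, producing absolute density $(1 - p^{-1})^2 \cdot p^{(1-n)/2}$; dividing by $\delta_{\max} = (1 - p^{-1})(1 - p^{-2})$ recovers the same formula.

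The main obstacle is the initial measure-decomposition step: one must verify rigorously that the multiplication map on factored pairs is measure-preserving, which amounts to checking that the $\BZ_p$-linear derivative $(\delta F_1, \delta G) \mapsto F_1 \cdot \delta G + \delta F_1 \cdot G$ is a $\BZ_p$-module isomorphism once $\bar F_1$ and $\bar G$ are coprime. This is standard (coprimality forces surjectivity modulo $p$, and a dimension count plus Nakayama/Hensel lifts this to $\BZ_p$), but careful bookkeeping is needed to confirm that the constraints on $F_1$ account for exactly the density $p^{-e_1}$ attributed to each canonical type. Once this decomposition is in place, the remainder of the argument reduces to elementary manipulation of geometric sums using Proposition~\ref{prop-inash}.
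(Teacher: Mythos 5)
Your proposal is correct and takes essentially the same approach as the paper: both reduce to summing, over the admissible canonical-type data $(e_1,\kappa)$ as constrained by Lemma~\ref{lem-d}, the product of $p^{-e_1}$ with the density $\delta_{n-e_1}$ from Proposition~\ref{prop-inash}, then dividing by the maximality density computed as in Theorem~\ref{thm-evendensecalcs}. The paper treats the Hensel/measure-factorization step as implicit (as it already did in the even-degree computation), so your flagging of it as the ``main obstacle'' is a matter of making explicit what the paper takes for granted rather than a different route.
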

\begin{proof}
Let $S \defeq \mc{F}_{n,\max}(f_0, \BZ_p)$, and let $S'$ be the subset of $S$ consisting of squareful forms. Let $\delta_S$ denote the $p$-adic density of $S$ in $\mc{F}_n(f_0, \BZ_p)$, and let $\delta_{S'}$ denote the $p$-adic density of $S'$ in $\mc{F}_n(f_0, \BZ_p)$. Then the desired $p$-adic density of $S'$ in $S$ is $\delta_{S'}/\delta_S$. 

By Lemma~\ref{lem-d}, we have $\min\{e_1, \nu_p(f_0)\} = 1$. By a computation entirely analogous to that of the density $\delta_{S}$ in the proof of Theorem~\ref{thm-evendensecalcs}, we then find that
\begin{align} \label{eq-deltaS'odd}
    & \delta_{S} = \begin{cases} \underset{ \kappa \in \BF_p^\times}{\sum} \left(p^{-n} + p^{1-n} + \sum_{\ell = 1}^{n-2} p^{-\ell}   (1 - p^{-2})\right)  = 1 - p^{-2}, &\hspace*{-2pt} \text{if $\nu_p(f_0) = 1$,} \\
    (1-p^{-1})  (1 - p^{-2}), &\hspace*{-2pt}  \text{if $\nu_p(f_0) > 1$.}
    \end{cases} 
\end{align}

Let $\delta_{2m}$ denote the $p$-adic density in $\mc{F}_{2m,\max}(1, \BZ_p)$ of forms $H$ such that $H$ is a perfect square modulo $p$ as in the proof of Theorem~\ref{thm-evendensecalcs}. By imitating the computation of the density $\delta_{S'}$ in the proof of Theorem~\ref{thm-evendensecalcs}, we find that
\begin{align} \label{eq-subeq}
& \delta_{S'} =  \begin{cases}  \underset{ \kappa \in \BF_p^\times}{\sum} \sum_{\substack{1 \leq e_1 \leq n \\ e_1 \equiv 1 \,\text{mod } 2}} p^{-e_1}  \delta_{n-e_1} = p^{\frac{1-n}{2}}  (1-p^{-1}), & \text{ if $\nu_p(f_0) = 1$,} \\
(1 - p^{-1})  \delta_{n-1} = p^{\frac{1-n}{2}}  (1-p^{-1})^2, & \text{ if $\nu_p(f_0) > 1$.}
\end{cases}
\end{align}
where the second equality in each case of~\eqref{eq-subeq} follows from Proposition~\ref{prop-inash}.
\end{proof}

\subsubsection{Existence, part $(iii)$\emph{:} $n$ is odd, $\nu_p(f_0)$ is even} \label{sec-exist333}

In this case, there exists $(I,\alpha) \in H_{F,1}^*$ such that the $G_n(\BQ_p)$-orbit containing $\mathsf{orb}_{F,1}(I,\alpha)$ is distinguished, assuming $R_F$ is maximal when $\nu_p(f_0) > 0$:
\begin{proposition} \label{prop-pnotk}
Let $\nu_p(f_0)$ be even, let $F \in \mc{F}_n(f_0,\BZ_p)$ if $\nu_p(f_0) = 0$, and let $F \in \mc{F}_{n,\max}(f_0,\BZ_p)$ if $\nu_p(f_0) > 0$. 
Then there exists a fractional ideal $I$ of $R_F$ such that $(I, f_0^{n}) \in H_{F,1}^*$.
\end{proposition}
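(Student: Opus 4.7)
The plan is to construct explicitly an invertible based fractional ideal $I$ and verify that the pair $(I, f_0^n)$ satisfies the two defining conditions of $H_{F,1}^*$: namely, $I^2 = f_0^n \cdot I_F^{n-2}$ as fractional ideals, together with the norm relation $\on{N}(I)^2 = \on{N}(f_0^n) \cdot \on{N}(I_F^{n-2}) = f_0^{n^2-n+2}$. Because $n$ is odd, the exponent $n^2-n+2$ is even, so the target norm is automatically a square in $\BQ_p^\times$; hence it will suffice to construct an ideal $I$ solving the first equation whose norm has valuation $\nu_p(f_0)(n^2-n+2)/2$, and then rescale the basis of $I$ by a unit to achieve $\on{N}(I) = f_0^{(n^2-n+2)/2}$ exactly.

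First I would dispose of the case $\nu_p(f_0) = 0$: here $f_0 \in \BZ_p^\times$, so elementary column operations on the generators $\langle 1, \theta, \zeta_2, \dots, \zeta_{n-1}\rangle$ of $I_F$ convert them into $\langle 1, \theta, \dots, \theta^{n-1}\rangle$, showing $I_F = R_F$. Thus $f_0^n \cdot I_F^{n-2} = R_F$, and $I = R_F$ with an appropriately scaled basis works.

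Next, for the case $\nu_p(f_0) > 0$, I would use maximality together with Lemma~\ref{lem-d} to force $e_1 = 1$, so that $F_1 = f_0 x + cz$ for some unit $c \in \BZ_p^\times$, giving $K_{F_1} = \BQ_p$ and $R_{F_1} = \BZ_p$. By Lemma~\ref{lem-prodmax}, $R_F \simeq \prod_{i=1}^m R_{F_i}$, and every invertible fractional ideal of $R_F$ decomposes accordingly. The key step is to identify the component $I_{F,i}$ of $I_F$ at each factor. For $i \geq 2$, monicity of $F_i$ makes $\theta_i$ and each $p_j(\theta_i)$ integral, giving $I_{F,i} = R_{F_i}$. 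For $i = 1$, I would evaluate each $p_j$ at $\alpha = -c/f_0$ and use the relation $f_0\alpha + c = 0$ to telescope: substituting $f_k = f_0\wt{f}_k + c\wt{f}_{k-1}$ (from the factorization $F = F_1 \cdot (F/F_1)$, with $\wt{f}_k$ denoting the coefficients of $F/F_1$) and rearranging yields $p_j(\alpha) = -c \cdot \wt{f}_{j-1} \in \BZ_p$, so that $I_{F,1} = f_0^{-1}\BZ_p$, the minimum valuation $-\nu_p(f_0)$ being realized by $\alpha$ itself.

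With this identification in hand, $f_0^n \cdot I_F^{n-2}$ decomposes as $(f_0^2)$ at $R_{F_1}$ and $(f_0^n)$ at each $R_{F_i}$ for $i \geq 2$; both are squares --- the former trivially, and the latter because $\nu_p(f_0)$ even makes the valuation $n \cdot \nu_p(f_0) \cdot e_i$ of $(f_0^n)$ in $R_{F_i}$ even. Taking $I = (f_0) \times \prod_{i \geq 2} J_i$ with $J_i$ a square root of $(f_0^n)$ in each $R_{F_i}$ produces an ideal satisfying $I^2 = f_0^n \cdot I_F^{n-2}$; a short norm computation using $\sum_{i \geq 2} e_i f_i = n-1$ confirms $\nu_p(\on{N}(I)) = \nu_p(f_0)(n^2-n+2)/2$, as required for the final basis rescaling. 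The main obstacle is the telescoping identity $p_j(\alpha) = -c\wt{f}_{j-1}$, which delicately depends on $\alpha$ being a root of $F_1$; once this is established, the remaining verifications are pure bookkeeping.
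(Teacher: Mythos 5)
Your proposal is correct and follows essentially the same route as the paper's proof: disposing of the case $\nu_p(f_0)=0$ by noting $I_F$ is the unit ideal, then for $\nu_p(f_0)>0$ invoking Lemma~\ref{lem-d} to force $e_1=1$, decomposing $R_F$ (you use the full product $\prod_i R_{F_i}$ via Lemma~\ref{lem-prodmax} where the paper uses $R_{F_1}\times R_{F/F_1}$, an immaterial difference), and observing that each component of $f_0^n\cdot I_F^{n-2}$ is a square of an invertible ideal with square norm, using oddness of $n$ and evenness of $\nu_p(f_0)$. The telescoping computation $p_j(\alpha)=-c\wt{f}_{j-1}$, which makes explicit that the component of $I_F$ at $R_{F_1}$ is $f_0^{-1}\BZ_p$, is a welcome bit of rigor that the paper leaves implicit.
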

\begin{proof}
If $\nu_p(f_0) = 0$, then $f_0 \in \BZ_p^\times$, and so $f_0^{n}  I_F = (1)$ is the unit ideal. Taking $I = (1)$, we find that $I^2 = f_0^{n}  I_F^{n-2}$; because $\on{N}(f_0^{n})  \on{N}(I_F^{n-2}) = f_0^{n^2-n+2}$ is a perfect square, there exists a basis of $I$ such that $\on{N}(I)^2 = \on{N}(f_0^{n})  \on{N}(I_F^{n-2})$.

If $\nu_p(f_0) > 0$, then by Lemma~\ref{lem-d}, we have that $e_1 = 1$ because $\nu_p(f_0)$ is even. It suffices to show that there exist based fractional ideals $I_1$ of $R_{F_1}$ and $I'$ of $R_{F/F_1}$ such that the following four conditions are satisfied: $I_1^2 = f_0^{n}  I_{F_1}^{n-2}$, $\on{N}(I_1)^2 = \on{N}(f_0^{n})  \on{N}(I_{F_1}^{n-2})$, ${I'}^2 = f_0^{n}  I_{F/F_1}^{n-2}$, and $\on{N}(I')^2 = \on{N}(f_0^{n})  \on{N}(I_{F/F_1}^{n-2})$. Notice that $f_0^{n}  I_{F_1} = (f_0^2)$, which is a square, and $\on{N}(f_0^{n})  \on{N}(I_{F_1}^{n-2}) = f_0^2$, which is a square, so we can take $I_1 = (f_0)$ and give it a basis with respect to which $\on{N}(I_1)^2 = \on{N}(f_0^{n})  \on{N}(I_{F_1}^{n-2})$. We also have \mbox{$f_0^{n}  I_{F/F_1}^{n-2} = (p^{\nu_p(f_0)  n})$,} which is a square, and that $\on{N}(f_0^{n})  \on{N}(I_{F/F_1}^{n-2}) = f_0^{n  \deg (F/F_1)}$, which is a square because $\deg (F/F_1) = n -1$ is even. Thus, there is a based fractional ideal $I'$ such that ${I'}^2 = f_0^{n}  I_{F/F_1}^{n-2}$ and $\on{N}(I')^2 = \on{N}(f_0^{n})  \on{N}(I_{F/F_1}^{n-2})$.
\end{proof}

 Taken together, Propositions~\ref{prop-adelic},~\ref{prop-whosdist}, and~\ref{prop-pnotk} give an explicit construction of a ``distinguished'' square root of the class of the inverse different of $R_F$ for any primitive $F \in \mc{F}_{n}^{r_1,r_2}(f_0,\BZ) \cap \bigcap_{p \mid f_0} \mc{F}_{n,\max}(f_0,\BZ_p)$ that is squareful over $\BZ_p$ for each $p \mid k$, thus proving Theorem~\ref{thm-diffsquare}. 

\section{Proofs of the main results on class group statistics} \label{sec-theproof}

Let $F \in \mc{F}_{n,\max}(f_0,\BZ)$. To prove our main results on class group statistics, we must bound (and conditionally determine) the average values of the quantities in~\eqref{eq-needsave} as $F$ ranges through subfamilies of $\mc{F}_{n,\max}(f_0,\BZ)$. In this section, we bound these averages by combining geometry-of-numbers results from~\cite{Siadthesis1,Siadthesis2} with the local orbit-counting results and density calculations obtained in \S\ref{sec-arith}. This section is \mbox{organized as follows:}
\begin{itemize}[leftmargin=15pt,itemsep=0pt]
    \item In \S\ref{sec-defaccept}, we define the notion of ``acceptable'' subfamily of binary forms referenced in the statements of our main theorems on class group statistics. We then give an asymptotic (see Theorem~\ref{thm-sqfrval}) for the number of forms of bounded height in such an acceptable subfamily.
    \item The parametrization produces $G_n(\BZ)$-orbits of pairs $(A,B) \in \BZ^2 \otimes_{\BZ} \on{Sym}_2 \BZ^n$ that lie on the hypersurface $\det A = \pm 1$. In \S\ref{sec-alintrick}, we explain how the problem of counting lattice points on this hypersurface can be ``linearized'' into a problem of counting lattice points in affine spaces.
    \item In \S\ref{sec-makeart}, after setting the notation, we recall the asymptotic upper bound (and conditional equality) obtained by Siad in~\cite{Siadthesis1,Siadthesis2} for the number of $G_n(\BZ)$-orbits of pairs $(A,B) \in \BZ^2 \otimes_{\BZ} \on{Sym}_2 \BZ^n$ that lie on the hypersurface $\det A = \pm 1$ and satisfy certain infinite sets of local \mbox{specifications (see Theorem~\ref{thm-usesiad}).}
    \item The asymptotic in Theorem~\ref{thm-usesiad} is expressed in terms of a product of $p$-adic densities that depend on the local specifications being imposed. In \S\ref{sec-diffsieves}, we use the results of \S\ref{sec-zorbs} to calculate the $p$-adic densities corresponding to pre-projective orbits that arise from the parametrization.
    \item In \S\ref{sec-zipit}, we combine Theorem~\ref{thm-usesiad} with the local density calculations from \S\ref{sec-diffsieves} to complete the proofs of the main results.
\end{itemize}

\subsection{Counting $N(\BZ)$-orbits of forms $F$} \label{sec-defaccept}

 For each prime $p$, let $\Sigma_p \subset \mc{F}_{n}(f_0,\BZ_p)$ be a nonempty $N(\BZ_p)$-invariant open set of primitive forms with boundary of measure $0$, 
and let $\Sigma_\infty = \mc{F}_n^{r_1,r_2}(f_0, \BR)$. We call the family $\Sigma = (\Sigma_v)_v$ over places $v$ of $\BQ$ a \emph{family of local specifications} and define
$$\mc{F}_n(f_0,\Sigma) \defeq \{F \in \mc{F}_n(f_0,\BZ) : F \in \Sigma_v \text{ for all places }v\}.$$
We call $\Sigma$ \emph{acceptable} if $\Sigma_p$ contains all $F \in \mc{F}_{n}(f_0,\BZ_p)$ with discriminant indivisible by $p^2$ for all $p \gg 1$ and if $\Sigma_2$ is the preimage in $\mc{F}_{n}(f_0, \BZ_2)$ of a set of separable forms in $\mc{F}_n(f_0, \BZ/2\BZ)$.

Let $(r_1, r_2)$ be a real signature of a degree-$n$ field, so that $r_1 + 2r_2 = n$, and let $\Sigma$ be an acceptable family of local specifications with $\Sigma_\infty = \mc{F}_n^{r_1,r_2}(f_0, \BR)$. 
As explained in \S\ref{sec-intromain} and in \S\ref{sec-ringsbins}, if $F,F' \in \mc{F}_{n}(f_0,\BZ)$ are $N(\BZ)$-translates of each other, then $R_F \simeq R_{F'}$. To minimize the multiplicity with which a given ring arises from our family $\mc{F}_n(f_0, \Sigma)$, we compute the averages of the quantities~\eqref{eq-needsave} over elements of the quotient family $N(\BZ)\backslash \mc{F}_{n}(f_0, \Sigma)$.\footnote{We could also average the quantities~\eqref{eq-needsave} over $\mc{F}_{n}(f_0,\Sigma)$, and the results in \S\ref{sec-intromain} would remain essentially unchanged.} Observe that for every $F \in \mc{F}_{n}(f_0,\Sigma)$, there exists a unique integer $b \in \{0, \dots, nf_0-1\}$ such that $F$ is an $N(\BZ)$-translate of a form whose $x^{n-1}y$-coefficient is equal to $b$. Thus, the set $N(\BZ) \backslash \mc{F}_{n}(f_0,\Sigma)$ can be expressed as 
\begin{equation} \label{eq-discjockey}
N(\BZ) \backslash\mc{F}_{n}(f_0,\Sigma) = \bigsqcup_{b = 0}^{f_0n-1} \{H \in \mc{F}_{n}(f_0,\Sigma) : b\text{ is the }x^{n-1}y\text{-coefficient of $H$}\}.
\end{equation}
Using the identification~\eqref{eq-discjockey}, we may think of $N(\BZ) \backslash\mc{F}_n(f_0,\Sigma)$ as a subset of $\mc{F}_n(f_0,\Sigma)$. Similarly, we can view $N(\BZ) \backslash \mc{F}_n(f_0, \BR)$ as a subset of $\mc{F}_n(f_0, \BR)$ by identifying each class in $N(\BZ) \backslash \mc{F}_n(f_0, \BR)$ with the unique representative in $\mc{F}_n(f_0, \BR)$ that has $x^{n-1}y$-coefficient lying in the interval $[0, f_0n-1)$.

For any subset \mbox{$S \subset \mc{F}_n(f_0,\BR)$} and real number $X > 0$, let $S_X \defeq \{F \in S : \on{H}(F) < X\}$, where the height function $\on{H}$ is as defined in \S\ref{sec-notability}. In addition, for any subset $S \subset \mc{F}_n(f_0,\BZ)$, let $S^{\on{irr}} \defeq \{F \in S : F \text{ is irreducible}\}$. We then have the following theorem, which gives us an asymptotic for how many elements of $N(\BZ) \backslash \mc{F}_{n}(f_0, \Sigma)$ there are of height up to $X$:

\begin{theorem}
 \label{thm-sqfrval}
Let $\Sigma$ be an acceptable family of local specifications. Then we have
\begin{align}
& \#\big(N(\BZ) \backslash\mc{F}_n(f_0,\Sigma)_X^{\on{irr}}\big) = \label{eq-follow} \on{Vol}(N(\BZ) \backslash (\Sigma_\infty)_X) \cdot \prod_{p} \on{Vol}(\Sigma_p) + o\big(X^{\frac{n(n+1)}{2}-1}\big), 
\end{align}
where the real volume is computed with respect to the Euclidean measure on $\mc{F}_n(f_0, \BR)$ normalized so that $\on{Vol}(\mc{F}_n(f_0, \BR)/\mc{F}_n(f_0,\BZ)) = 1$, and where the $p$-adic volumes are computed with respect to the Euclidean measure on $\mc{F}_n(f_0, \BZ_p)$ normalized so that $\on{Vol}(\mc{F}_n(f_0, \BZ_p)) = 1$.
\end{theorem}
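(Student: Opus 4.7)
The plan is to reduce the count to a lattice-point count in an expanding region of the $n$-dimensional real vector space $\mc{F}_n(f_0,\BR)$, and then to layer in the local conditions via a sieve whose tail is controlled by the acceptability hypothesis.

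First, I would fix a concrete fundamental domain for $M_1(\BZ)$ acting on $\mc{F}_n(f_0,\BZ)$. Since $\gamma \cdot F(x,z) = F(x+sz,z)$ for $\gamma = \bigl[\begin{smallmatrix}1 & 0 \\ s & 1\end{smallmatrix}\bigr] \in M_1(\BZ)$ shifts the $x^{n-1}z$-coefficient $f_1$ by $n f_0 s$, the decomposition~\eqref{eq-discjockey} realizes $M_1(\BZ)\backslash \mc{F}_n(f_0,\BZ)$ as the slab $\{F \in \mc{F}_n(f_0,\BZ) : f_1 \in \{0,\dots,nf_0-1\}\}$. The same slab serves as a fundamental set for $M_1(\BR)$ on $\mc{F}_n(f_0,\BR)$ after replacing the finite set with the half-open interval $[0,nf_0)$. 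The height $\on{H}(F)$ defined in~\eqref{eq-secondtimeheight1} depends only on the translation-invariant quantities $\wt{f}_2,\dots,\wt{f}_n$, so the region $M_1(\BZ)\backslash (\Sigma_\infty)_X$ is a box-like region of volume $\asymp X^{n(n+1)/2-1}$ in the $(n-1)$-dimensional transverse directions times the interval of length $nf_0$ in the $f_1$-direction, matching the asserted order.

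Next, I would prove~\eqref{eq-follow} in stages. Stage one: count lattice points in $M_1(\BZ)\backslash(\Sigma_\infty)_X$ without any $p$-adic conditions. A direct application of Davenport's lemma to this expanding semi-algebraic region gives $\on{Vol}(M_1(\BZ)\backslash(\Sigma_\infty)_X) + O(X^{n(n+1)/2-2})$, since the boundary has one dimension less. Stage two: impose congruence conditions at a finite set $T$ of primes. This is a routine application of the Chinese remainder theorem and uniform distribution of the lattice $\mc{F}_n(f_0,\BZ)$ in residue classes modulo $\prod_{p\in T} p^{k_p}$, and yields the factor $\prod_{p \in T}\on{Vol}(\Sigma_p)$ with an error of the same shape (multiplied by a constant depending on $T$).

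Stage three is the main obstacle: extending the finite product to an infinite product. Using acceptability, for all primes $p$ larger than some bound $M$, the complement of $\Sigma_p$ in $\mc{F}_{n,\on{prim}}(f_0,\BZ_p)$ consists only of forms $F$ with $p^2 \mid \on{disc}(F)$. The key uniformity estimate needed here is a bound of Ekedahl type: the number of $F \in \mc{F}_n(f_0,\BZ)_X$ with $p^2 \mid \on{disc}(F)$ for some prime $p > M$ is $O(X^{n(n+1)/2-1}/M) + O_M(X^{n(n+1)/2-2})$, with the implied constants independent of $M$. This lets one truncate the sieve at primes $\leq M$, apply stage two to get the product over those primes, and then let $M \to \infty$ to arrive at $\prod_p \on{Vol}(\Sigma_p)$. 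Such an estimate is precisely the content of the square-free sieve worked out for binary forms in~\cite{sqfrval}, and is the technical engine behind all counting results of this shape. Finally, to restrict to irreducible forms, I would invoke the bound that the number of reducible $F \in \mc{F}_n(f_0,\BZ)_X$ is $o(X^{n(n+1)/2-1})$, since reducible forms lie in a subvariety of strictly smaller dimension in the $\wt{f}_i$-coordinates and the restriction of the counting measure to this subvariety grows at a slower rate. Combining these inputs in the order above delivers the stated asymptotic.
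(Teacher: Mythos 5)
Your proposal follows essentially the same route as the paper: a concrete fundamental domain realized as a slab in the $f_1$-coordinate, geometry of numbers for the archimedean count, finite-prime congruence conditions via equidistribution, and a squarefree-type uniformity estimate to extend to the infinite product. The paper does exactly this, citing the template proof structure from~\cite{MR3272925} and plugging in its Theorem~\ref{thm-tail} for the tail bound, so the architecture matches.

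One step you elide is not quite free, however. You assert that the needed tail estimate ``is precisely the content of the square-free sieve worked out for binary forms in~\cite{sqfrval},'' but that reference gives the bound only for the \emph{monic} family $\mc{F}_n(1,\BZ)$. The paper's Theorem~\ref{thm-tail} must therefore transport the estimate to $\mc{F}_n(f_0,\BZ)$ for general $f_0$, and it does so via the monicization map $\mathsf{mon}\colon \mc{F}_n(f_0,\BR) \to \mc{F}_n(1,\BR)$: $\mathsf{mon}$ is injective and height-preserving, and the discriminant of $F_{\mathsf{mon}}$ differs from that of $F$ only by a power of $f_0$, so $p^2 \mid \on{disc}(F)$ implies $p^2 \mid \on{disc}(F_{\mathsf{mon}})$ for all primes $p$ not dividing $f_0$, yielding the containment $\mathsf{mon}(W_p(f_0)_X) \subset W_p(1)_X$. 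Without this transfer your stage three would not apply to the nonmonic families the theorem addresses. A smaller point: the available bound from~\cite{sqfrval} carries an extra $X^\varepsilon$ in the main error term, i.e.\ it is $O(X^{\frac{n(n+1)}{2}-1+\varepsilon}/M)$ rather than the $O(X^{\frac{n(n+1)}{2}-1}/M)$ you wrote. This is harmless for the final asymptotic, since one takes $M \to \infty$ first, but your statement is slightly stronger than what is actually known.
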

\begin{proof}
The theorem can be deduced using the tail estimate given in the next result, Theorem~\ref{thm-tail}, in the same way that~\cite[Theorem~2.21]{MR3272925} is proven using~\cite[Theorem~2.13]{MR3272925}. Indeed, the first half of the proof of~\cite[Theorem~2.21]{MR3272925} can be imitated to prove~\eqref{eq-follow} with ``$=$'' replaced by ``$\leq$''; then, by using Theorem~\ref{thm-tail} in place of~\cite[Theorem~2.13]{MR3272925}, the second half of the proof of~\cite[Theorem~2.21]{MR3272925} can be imitated to prove~\eqref{eq-follow} with ``$=$'' replaced by ``$\geq$.''
\end{proof}

The next theorem supplies the tail estimate needed to prove Theorem~\ref{thm-sqfrval}:

\begin{theorem} \label{thm-tail}
Let $W_n(f_0,p) \defeq \{F \in \mc{F}_n(f_0,\BZ) : p^2 \mid \text{discriminant of $F$}\}$. For any real number $M$ that is larger than each prime divisor of $f_0$, and for any $\varepsilon \in (0,1)$, we have
\begin{equation} \label{eq-formuniform}
\#\bigg(\bigcup_{p>M}N(\BZ) \backslash W_n(f_0,p)_X\bigg) = O\bigg(\frac{X^{\frac{n(n+1)}{2}-1+\varepsilon}}{M}\bigg) + o\big(X^{\frac{n(n+1)}{2}-1}\big),
\end{equation}
where the implied constant in the big-$O$ is independent of $X,\,M$.
\end{theorem}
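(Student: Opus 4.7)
The argument will follow the now-standard template for ``$p^2$-dividing-discriminant'' uniformity estimates (cf.~\cite[Theorem~2.13]{MR3272925}): fix a threshold $Y \defeq X^\delta$ for some small $\delta \in (0,1)$ to be optimized, and split
\[
\bigcup_{p > M} W_p(f_0)_X \;=\; \bigcup_{M < p \leq Y} W_p(f_0)_X \;\cup\; \bigcup_{p > Y} W_p(f_0)_X.
\]
Using the fundamental-domain reduction that the $x^{n-1}z$-coefficient of $F$ lies in $\{0, \dots, nf_0 - 1\}$, the orbit count $\#(M_1(\BZ) \backslash \mc{F}_n(f_0, \BZ)_X)$ agrees with the count of integer tuples $(\wt{f}_2, \dots, \wt{f}_n)$ in the box $|\wt{f}_i| < X^i$, which is $O(X^{n(n+1)/2 - 1})$. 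The estimate~\eqref{eq-formuniform} will arise from handling the two pieces of this decomposition separately.

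For the small-prime range $M < p \leq Y$, the first step is to establish the $p$-adic density bound $\mu_p(W_p(f_0)) = O_n(p^{-2})$. Since $p \nmid f_0$, the discriminant polynomial $\on{disc}(F) \in \BZ[f_1,\dots,f_n]$ is nontrivial and its vanishing locus is a hypersurface whose singular locus has codimension $\geq 2$; the Jacobian criterion then gives local density $1/p^2$ at smooth points of $\{\on{disc} = 0\}$, while the singular locus itself contributes negligibly. Multiplying this density by the lattice-point count, applying standard lattice-point counting in a box with congruence conditions of modulus $\leq Y^2 = X^{2\delta}$ (taking $\delta$ small enough that the boundary errors are subsumed), and summing via $\sum_{p > M} p^{-2} = O(1/M)$, produces a contribution of $O(X^{n(n+1)/2 - 1 + \varepsilon}/M)$ from the small-prime range. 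The $X^\varepsilon$ absorbs the divisor and boundary-error bookkeeping standard in such congruence-condition counts.

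For the large-prime range $p > Y$, a direct local argument fails, since $p^2 \geq X^{2\delta}$ becomes comparable to the scale of the box; the plan is instead to apply an Ekedahl-type geometric sieve. Decompose $W_p = W_p^{\mathrm{I}} \sqcup W_p^{\mathrm{II}}$, where $W_p^{\mathrm{I}}$ consists of $F$ whose reduction modulo $p$ has either a factor of multiplicity $\geq 3$ or at least two distinct multiple factors, and $W_p^{\mathrm{II}}$ consists of $F$ whose reduction modulo $p$ has a unique simple double factor lifting to a doubly-ramified factor over $\BZ_p$. The locus $W_p^{\mathrm{I}}$ is the reduction modulo $p$ of a codimension-$\geq 2$ closed subvariety of the affine space of binary $n$-ic forms with leading coefficient $f_0$, so Ekedahl's geometric sieve applied uniformly in $p > Y$ yields total contribution $o(X^{n(n+1)/2-1})$. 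For $W_p^{\mathrm{II}}$ one parametrizes the mod-$p$ double factor by a degree-$1$ place of $\BP^1_{\BF_p}$ (giving $O(p)$ choices), uses the doubly-ramified lifting condition to impose an extra codimension-$1$ congruence modulo $p$, counts the resulting lattice points, and sums over $p > Y$ to obtain $o(X^{n(n+1)/2-1})$ provided $\delta$ is chosen sufficiently small.

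The main obstacle is the arithmetic case $W_p^{\mathrm{II}}$ for large $p$: a naive treatment of the codimension-$1$ mod-$p$ discriminant-vanishing locus would only recover $O(X^{n(n+1)/2-1})$, and the required additional saving of a factor of $p$ must be extracted by exploiting the doubly-ramified lifting structure. This is also the step from which the auxiliary $X^\varepsilon$-factor in~\eqref{eq-formuniform} ultimately originates, via the divisor-sum bookkeeping needed to organize the various $p$-adic factorization types that can give rise to $W_p^{\mathrm{II}}$.
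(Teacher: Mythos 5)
Your proposal is a fundamentally different — and vastly more labor-intensive — route than what the paper does. The paper never re-derives the sieve from scratch: it observes that the required uniformity estimate for the monic family $\mc{F}_n(1,\BZ)$ (with the naive coefficient-box height $\wt{\on{H}}(F) = \max_i |f_i|^{1/i}$) is precisely \cite[Theorem~1.5]{sqfrval}, undoes the passage from $\wt{\on{H}}$ to $\on{H}$ by comparability of the two heights and the passage to $M_1(\BZ)$-orbits by the remark after \cite[Theorem~6.4]{sqfrval}, and then reduces $f_0 \neq 1$ to $f_0 = 1$ in two lines via the monicization map $\mathsf{mon}\colon \mc{F}_n(f_0,\BR)\to\mc{F}_n(1,\BR)$. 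The point is that $\mathsf{mon}$ is injective, preserves $\on{H}$, and rescales the discriminant by a power of $f_0$, so for $p > M$ (hence $p\nmid f_0$) one gets the containment $\mathsf{mon}(W_p(f_0)_X)\subset W_p(1)_X$, and the bound for general $f_0$ drops out of the monic bound. No fresh Ekedahl sieve is needed at all.

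Your plan, by contrast, is essentially a blind re-derivation of the content of \cite{sqfrval}: the threshold split at $Y = X^\delta$, the $O(p^{-2})$ density estimate for $M < p \leq Y$, the Ekedahl geometric sieve for the codimension-$\geq 2$ piece $W_p^{\mathrm{I}}$, and the arithmetic piece $W_p^{\mathrm{II}}$ requiring an extra factor-of-$p$ saving from the doubly-ramified lifting structure. This template is indeed the shape of the argument in the cited reference, so you have correctly identified the structure of the underlying theorem — but you are reinventing the hardest theorem in that paper rather than invoking it, and for the genuinely new piece (going from $f_0=1$ to general $f_0$) you offer nothing, while the paper handles it with the monicization trick in one observation.

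Moreover, the step you yourself flag as ``the main obstacle'' — extracting the saving of a factor of $p$ in the $W_p^{\mathrm{II}}$ range $p > Y$ by imposing an extra mod-$p$ congruence on the coefficients along the branch of a doubly-ramified lift — is exactly the nontrivial content of \cite[Theorem~1.5]{sqfrval}, and your proposal stops at stating the plan (``one parametrizes\dots, uses the lifting condition\dots, counts the resulting lattice points\dots'') without carrying it out. As written this is a genuine gap: the estimate $o(X^{n(n+1)/2-1})$ for $\sum_{p > Y}\#(W_p^{\mathrm{II}})_X$ does not follow from what you have asserted, and it is precisely the place where a naive count fails. You should either cite \cite{sqfrval} for the monic case and then reduce to it as the paper does, or else fill in a full proof of the $W_p^{\mathrm{II}}$ estimate (including the case where the double factor has degree $> 1$ over $\BF_p$, which your parametrization by ``degree-$1$ places'' currently omits).
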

\begin{proof}
We first prove~\eqref{eq-formuniform} in the case where $f_0 = 1$. In this case,~\cite[Theorem~1.5]{sqfrval} implies that~\eqref{eq-formuniform} holds if we make the following two replacements: (1) work with the family $\mc{F}_n(1,\BZ)$ as opposed to the family $N(\BZ) \backslash \mc{F}_n(1,\BZ)$; and (2) replace $\on{H}$ with $\wt{\on{H}}$, which is defined on $F \in \mc{F}_n(1, \BR)$ by $\wt{\on{H}}(F) = \max\{|f_i|^{1/i}:2 \leq i \leq n\}$. These two replacements can be easily undone. Indeed, to deal with the first replacement, the proof of~\cite[Theorem~1.5]{sqfrval} can be modified (as suggested in~\cite[discussion after Theorem~6.4]{sqfrval}) to obtain the analogous result for the family $N(\BZ) \backslash \mc{F}_n(1,\BZ)$. To deal with the second replacement, we note that the functions $\on{H}$ and $\wt{\on{H}}$ are ``comparable,'' in the sense that $\wt{\on{H}}(F) = O( \on{H}(F))$ for all $F \in \mc{F}_n(1,\BR)$. It follows that~\eqref{eq-formuniform} holds when $f_0 = 1$.

We now deduce the case $f_0 \neq 1$ from the case $f_0 = 1$. Let $\mathsf{mon} \colon \mc{F}_n(f_0,\BR) \to \mc{F}_n(1,\BR)$ denote the map that sends a binary form $F$ to its monicized form $F_{\mathsf{mon}}$. Notice that $\mathsf{mon}$ is one-to-one, that $\on{H}(F_{\mathsf{mon}}) = \on{H}(F)$, and that the ratio of the discriminants of $F_{\mathsf{mon}}$ and $F$ is a power of $f_0$. Thus, for any prime $p$ larger than every prime divisor of $f_0$, we have
\begin{equation} \label{eq-contain}
\mathsf{mon}(W_n(f_0,p)_X) \subset W_n(1,p)_X.
\end{equation}
Taking the union over primes $p > M$ on both sides of~\eqref{eq-contain} and estimating the right-hand side by applying~\eqref{eq-formuniform} for the case $f_0 = 1$ \mbox{yields the theorem in all cases.}
\end{proof}

\subsection{A linearization trick} \label{sec-alintrick}

To average the quantities~\eqref{eq-needsave} over $F \in \mc{F}_n(f_0,\Sigma)$, we need a way to count $G_n(\BZ)$-orbits of pairs $(A,B) \in \BZ^2 \otimes_\BZ \Sym_2 \BZ^n$ arising from elements of $H_{F,\pm 1}^*$ via the parametrization. However, if $(A,B)$ represents such a $G_n(\BZ)$-orbit, then $\det A = \pm (-1)^{\lfloor \frac{n}{2}\rfloor}$, so we would need to count integral points on the hypersurfaces in $\BR^2 \otimes_\BR \Sym_2 \BR^n$ defined by the equations $\det A = \pm (-1)^{\lfloor \frac{n}{2}\rfloor}$. Because these hypersurfaces are non-linear, this count would be difficult to perform directly. To circumvent this issue, we use a linearization trick introduced in~\cite[\S1.3]{BSHpreprint}. Let $S$ be any $\BZ$-algebra. The trick relies on the following pair of observations:
\begin{itemize}[leftmargin=15pt,itemsep=0pt]
\item If a pair $(A,B) \in S^2 \otimes_S \Sym_2 S^n$ is such that $\det A = \pm (-1)^{\lfloor \frac{n}{2}\rfloor} $, then $T = -A^{-1}B \in \on{Mat}_n(S)$ is self-adjoint with respect to $A$ (i.e., $T^TA = AT$), and the characteristic polynomial $\on{ch}_T$ of $T$ is $\on{ch}_T(x) = \pm \on{inv}(x  A + B)$ (where the $\pm$ sign matches the sign of $\det A$, so that $\on{ch}(T)$ is monic).
\item If $T \in \on{Mat}_n(S)$ is self-adjoint with respect to a matrix $A \in \on{Sym}_2 S^n$ such that $\det A = \pm  (-1)^{\lfloor \frac{n}{2}\rfloor}$, then $-AT$ is symmetric, and $\on{ch}_T(x) = \pm \on{inv}(x  A  -AT)$.
\end{itemize}
Now, let ${A_0} \in \on{Mat}_n(S)$ be a symmetric matrix with $\det A_0 = \pm  (-1)^{\lfloor \frac{n}{2}\rfloor}$, and let $\mathcal{G}_{A_0}$ be the orthogonal group scheme over $\BZ$ whose $S$-points are given by
\begin{align*}\mathcal{G}_{A_0}(S) = \begin{cases} \on{SO}_{A_0}(S) = \{g \in \on{SL}_{n}(S) : g^T{A_0}g = {A_0}\}, & \text{ if $n$ is odd,} \\ \on{O}_{A_0}(S) = \{g \in \on{SL}_{n}^{\pm}(S) : g^T{A_0}g = {A_0}\}, & \text{ if $n$ is even.} \end{cases}\end{align*} 
Let $V_{A_0}$ be the affine scheme over $\BZ$ whose $S$-points are given by
$$V_{A_0}(S) \defeq \{T \in \on{Mat}_n(S) : T^T{A_0} = {A_0}T\}.$$
Observe that $V_{A_0}$ has a natural structure of $\mc{G}_{A_0}$-representation, where for any $g \in \mc{G}_{A_0}(S)$ and $T \in V_{A_0}(S)$, we have $g \cdot T = gTg^{-1}$. Then it follows immediately from the two observations itemized above that we have the following stabilizer-preserving bijection for any form $F \in \mc{F}_n(1, S)$:
\begin{equation} \label{eq-crubi}
    \left\{\begin{array}{c}  \text{ $(A,B) \in G_n(S)\backslash (S^2 \otimes_S \Sym_2 S^n)$} \\ \text{with $\on{inv}(x  A + y  B) = \pm F(x,y)$} \end{array}\right\} \leftrightsquigarrow \bigsqcup_{{A_0}} \left\{\begin{array}{c}\text{$T \in \mc{G}_{A_0}(S) \backslash V_{A_0}(S)$} \\  \text{with $\on{ch}_T(x) = F(x,1)$} \end{array}\right\}
\end{equation}
where on the right-hand side of~\eqref{eq-crubi}, the union runs over representatives ${A_0}$ of the set of $G_n(S)$-classes of symmetric matrices ${A_0} \in \on{Mat}_n(S)$ such that $\det {A_0} = \pm (-1)^{\lfloor \frac{n}{2}\rfloor}$. By~\eqref{eq-crubi}, instead of counting $G_n(\BZ)$-orbits on pairs $(A,B) \in \BZ^2 \otimes_\BZ \Sym_2 \BZ^n$ satisfying $\det A = \pm  (-1)^{\lfloor \frac{n}{2}\rfloor}$, we can just count $\mc{G}_{A_0}(\BZ)$-orbits on matrices $T \in V_{A_0}(\BZ)$, where ${A_0}$ ranges through a finite set of possibilities.

\begin{remark}
As mentioned in \S\ref{sec-methode}, a key benefit of the new parametrization is that it replaces forms $F \in \mc{F}_n(f_0,\BZ)$ with their monicized versions $F_{\mathsf{mon}} \in \mc{F}_n(1,\BZ)$ (up to a sign). Thus, the union in~\eqref{eq-crubi} only runs over matrices $A_0 \in \on{Mat}_n(\BZ)$ with unit determinant, and this significantly simplifies the analysis in \S\ref{sec-diffsieves}. This is in contrast to the treatment of binary cubic forms having fixed leading coefficient in~\cite{BSHpreprint}, where the analogous union runs over matrices ${A_0} \in \on{Mat}_3(\BZ)$ having any fixed nonzero determinant, and the corresponding analysis is more \mbox{intricate (see~\cite[\S4.4]{BSHpreprint}).}
\end{remark}

\subsection{Counting $(N \times \mc{G}_{A_0})(\BZ)$-orbits of self-adjoint matrices $T$} \label{sec-makeart}

Fix a symmetric matrix ${A_0} \in \on{Mat}_{n}(\BZ)$ satisfying $\det {A_0} = \pm  (-1)^{\lfloor \frac{n}{2}\rfloor}$. Let $S$ be any $\BZ$-algebra, and for $h \in N(S)$ and $T \in V_{A_0}(S)$, let $h \cdot T \defeq -f_0h_{21}  \on{Id} + T$. This gives a well-defined action of $N(S)$ on $V_{A_0}(S)$. 

 We call $T \in V_{A_0}(\BZ_p)$ \emph{pre-projective} if the pair $({A_0},-{A_0}T) \in \BZ_p^2 \otimes_{\BZ_p} \Sym_2 \BZ_p^n$ is pre-projective in the sense of \S\ref{sec-diffsqs}. Let $\Sigma$ be an acceptable family of local specifications, and for each prime $p$, let
 $$\scr{V}_{A_0}(\Sigma_p) = \left\{T \in V_{A_0}(\BZ_p) : \begin{array}{c}\text{$T$ is pre-projective and } \\ \on{ch}_T(x) = F_{\mathsf{mon}}(x,1) \text{ for some }F \in \Sigma_p\end{array}\right\}$$
 
 Next, let $(r_1, r_2)$ be a real signature of a degree-$n$ number field, and let $F \in \mc{F}_n^{r_1, r_2}(\pm 1, \BR)$ be separable. It follows from Proposition~\ref{prop-rationalcalc} that the $G_n(\BR)$-orbits of pairs $(A,B) \in \BR^2 \otimes_\BR \Sym_2 \BR^n$ such that $\on{inv}(x  A + y  B) = F(x,y)$ are in bijection with the elements of $(K_F^\times/K_F^{\times 2})_{\on{N}\equiv1}$ if $n$ is odd (resp., $(K_F^\times/K_F^{\times 2})_{\on{N}\equiv \pm 1}$ if $n$ is even). It is easy to check that one may identify the group $K_F^\times/K_F^{\times2}$ with the group $(\BR^\times/\BR^{\times2})^{r_1}$, but we now choose such an identification in a way that is natural with respect to the binary form $F$. Let $\theta_1 < \theta_2 < \cdots < \theta_{r_1}$ denote the real roots of $F$. Then we have
 \begin{equation} \label{eq-realkeefe}
 K_F = \BR[x]/(F(x,1)) \simeq \bigg(\prod_{i = 1}^{r_1} \BR[x]/(x-\theta_i)\bigg) \times \BC^{r_2} \simeq \BR^{r_1} \times \BC^{r_2},
 \end{equation}
 and the identification in~\eqref{eq-realkeefe} descends to a natural identification $K_F^\times/K_F^{\times2} \simeq (\BR^\times/\BR^{\times2})^{r_1}$. Given this, we say that a $G_n(\BR)$-orbit of a pair $(A_0,B) \in \BR^2 \otimes_\BR \Sym_2 \BR^n$ is of type \mbox{$\sigma \in (\BR^\times/\BR^{\times2})^{r_1}$} if $F(x,y) = \on{inv}(x  A_0 + y  B) \in \mc{F}_n^{r_1,r_2}(\pm 1,\BR)$ is separable and if the corresponding element of $K_F^\times/K_F^{\times2}$ is \mbox{identified with $\sigma$.}
 
 Now, fix $\sigma \in (\BR^\times/\BR^{\times2})^{r_1}_{\on{N}\equiv1}$ if $n$ is odd (resp., $\sigma \in (\BR^\times/\BR^{\times2})^{r_1}_{\on{N}\equiv \pm1}$) if $n$ is even), and let
 $$\scr{V}_{{A_0}}^\sigma(\Sigma_\infty) =\{T \in V_{A_0}(\BR) : \text{$G_n(\BR)$-orbit of $(A_0,-A_0T)$ has type $\sigma$}\}.$$
 The set $\scr{V}_{A_0}^\sigma(\Sigma_\infty)$ is nonempty if and only if there exists $T \in V_{A_0}(\BR)$ such that the $G_n(\BR)$-orbit of $(A_0,-A_0T)$ has type $\sigma$. To keep track of whether or not $\scr{V}_{A_0}^\sigma(\Sigma_\infty)$ is empty, we let
 $$\chi_{A_0}(\sigma) = \begin{cases} 1, & \text{ if $\scr{V}_{A_0}^\sigma(\Sigma_\infty) \neq \varnothing$,} \\ 0, & \text{ otherwise.} \end{cases}$$
Given the sets $\scr{V}_{A_0}(\Sigma_p)$ and $\scr{V}_{A_0}^\sigma(\Sigma_\infty)$, we define
$$\scr{V}_{A_0}^\sigma(\Sigma) \defeq V_{A_0}(\BZ) \cap \scr{V}_{A_0}^\sigma(\Sigma_\infty) \cap \bigcap_p \scr{V}_{A_0}(\Sigma_p).$$

As we are averaging the quantities~\eqref{eq-needsave} over $N(\BZ)$-orbits of forms in $\mc{F}_n(f_0,\BZ)$, we are interested only in elements of $\scr{V}_{A_0}^\sigma(\Sigma)$ up to the action of $(N \times \mc{G}_{A_0})(\BZ)$. One readily verifies that the set $\scr{V}_{{A_0}}^\sigma(\Sigma)$ is $(N \times \mc{G}_{A_0})(\BZ)$-invariant. 

To count elements of $(N \times \mc{G}_{A_0})(\BZ) \backslash\scr{V}_{{A_0}}^\sigma(\Sigma)$, we need a way to order them. To this end, let $T \in V_{A_0}(\BR)$ be such that $\on{ch}_T(x) = F(x,1)$ for some form $F \in \mc{F}_n(1,\BZ)$. We then define the height $\on{H}(T)$ of $T \in V_{A_0}(\BR)$ in terms of the height $\on{H}$ defined in~\eqref{eq-secondtimeheight1} as follows: 
\begin{equation} \label{eq-heightabs}
\on{H}(T) \defeq \on{H}(F),
\end{equation}
The group $(N \times \mc{G}_{A_0})(\BR)$ acts on $T \in V_{A_0}(\BR)$ via $(g, h) \cdot T = -f_0h_{21}  \on{Id} + gTg^{-1}$. It follows from~\eqref{eq-secondtimeheight1} and~\eqref{eq-heightabs} that $\on{H}(T) = \on{H}((g,h) \cdot T)$ for any $(g,h) \in (N \times \mc{G}_{A_0})(\BR)$ and $T \in V_{A_0}(\BR)$. Thus, $\on{H}$ descends to a well-defined height function on $(N \times \mc{G}_{A_0})(\BZ) \backslash V_{A_0}(\BR)$.

For any subset \mbox{$S \subset V_{A_0}(\BR)$} and real number $X > 0$, let $S_X \defeq \{T \in S : \on{H}(T) < X\}$. For $T \in V_{A_0}(\BZ)$, we say that the $\mc{G}_{A_0}(\BQ)$-orbit of $T$ is \emph{distinguished} if the $G_n(\BQ)$-orbit of the pair $({A_0},-{A_0}T) \in \BZ^2 \otimes_\BZ \Sym_2 \BZ^n$ is distinguished. For any subset $S \subset V_{A_0}(\BZ)$, let $$S^{\on{irr}} \defeq \{T \in S : \on{ch}_T \text{ is irreducible and the $\mc{G}_{A_0}(\BQ)$-orbit of $T$ is non-distinguished}\}.$$
We then have the following theorem, which gives an asymptotic for how many non-distinguished elements of $(N \times \mc{G}_{A_0})(\BZ) \backslash \scr{V}_{{A_0}}^\sigma(\Sigma)$ \mbox{there are of height up to $X$:}
\begin{theorem} \label{thm-usesiad}
With notation as above, we have that
\begin{align} \label{eq-siadineq}
& \#\big((N \times \mc{G}_{A_0})(\BZ) \backslash \scr{V}_{{A_0}}^\sigma(\Sigma)_X^{\on{irr}}\big) \leq 2^{1 - r_1  - r_2} \cdot \tau_{\mc{G}_{A_0}} \cdot \chi_{A_0}(\sigma) \cdot \\
& \qquad\qquad\qquad\qquad\qquad\qquad\qquad\qquad \on{Vol}\left(\mathsf{mon}(N(\BZ) \backslash \Sigma_\infty)_X\right) \cdot \prod_{p}\mc{J}(A_0,\Sigma_p) + o\big(X^{\frac{n(n+1)}{2}-1}\big), \nonumber
\end{align}
\normalsize
where for $A \in \on{Sym}_2 \BZ_p^n$ with $\det A \in \BZ_p^\times$ and $\mc{S} \subset \mc{F}_{n,\max}(f_0,\BZ_p)$, we write
$$\mc{J}(A,\mc{S}) \defeq \int_{F \in \mathsf{mon}(\mc{S})} \sum_{\substack{T \in \mc{G}_{A}(\BZ_p) \backslash \scr{V}_{{A}}(\mc{S}) \\ \on{ch}_T(x) = F(x,1)}} \frac{dF}{\#\on{Stab}_{\mc{G}_{A}}(T)} \cdot \left.\begin{cases} 1, & \text{ if $n$ is odd,} \\ 2, & \text{ if $n$ is even,}\end{cases}\right\}$$
where $dF$ is the Euclidean measure on $\mc{F}_n(1,\BZ_p)$ normalized as in Theorem~\ref{thm-sqfrval}, and where
\begin{equation} \label{eq-tamaw}
\tau_{\mc{G}_{A_0}} \defeq \on{Vol}\left(\mc{G}_{A_0}(\BZ) \backslash \mc{G}_{A_0}(\BR)\right) \cdot \begin{cases} \prod_{p} \on{Vol}(\mc{G}_{A_0}(\BZ_p)), & \text{ if $n$ is odd,} \\ \prod_{p} \frac{1}{2} \cdot \on{Vol}(\mc{G}_{A_0}(\BZ_p)), & \text{ if $n$ is even.}\end{cases}
\end{equation}
In~\eqref{eq-tamaw}, the volumes are computed with respect to the Haar measure on $\mc{G}_{A_0}$.

Let $\scr{W}_{{A_0},p} \defeq \{T \in V_{A_0}(\BZ) : p^2 \mid \text{discriminant of $\on{ch}_T$}\}$. Then equality holds in~\eqref{eq-siadineq} if the following tail estimate holds: for any $M > 0$ and $\varepsilon \in (0,1)$, we have
 \begin{equation} \label{eq-conjest}
     \#\left(\bigcup_{p > M} (N \times \mc{G}_{A_0})(\BZ) \backslash (\scr{W}_{{A_0},p})_X\right) = O\left(\frac{X^{\frac{n(n+1)}{2}-1}}{M^{1-\varepsilon}}\right) + o\big(X^{\frac{n(n+1)}{2}-1}\big),
 \end{equation}
 where the implied constant in the big-$O$ is independent of $X,M$.
\end{theorem}
\begin{proof}
The theorem follows by combining~\cite[Theorems 59, 61, 63, and 66 and Proposition 69]{Siadthesis1} if $n$ is odd (resp.,~\cite[Theorems 62, 64, 66, and 69 and Proposition 72]{Siadthesis2} if $n$ is even) and applying these results to the set $\scr{V}_{A_0}^\sigma(\Sigma)$.
\end{proof}

\begin{remark}
As stated in~\cite[\S5.2]{Siadthesis1} and~\cite[\S5.2]{Siadthesis2}, the tail estimate in~\eqref{eq-conjest} holds for $n = 3$ and is expected to hold for $n \geq 4$ too. 
\end{remark}

\subsection{Sieving to square roots of the class of the inverse different}  \label{sec-diffsieves}

Let $r = 1$ if $n$ is odd and $r = \pm 1$ if $n$ is even. As in \S\ref{sec-makeart}, we fix an acceptable family $\Sigma$ of local specifications such that $\mc{F}_n(f_0, \Sigma) \subset \mc{F}_{n,\max}(f_0, \BZ)$ and a symmetric matrix $A_0 \in \on{Mat}_n(\BZ)$ such that $\det A_0 =r (-1)^{\lfloor \frac{n}{2}\rfloor}$. In this section, we use the results of \S\ref{sec-zorbs} to compute each of the local factors on the right-hand side of~\eqref{eq-siadineq}. We combine these computations to deduce the main theorems in \S\ref{sec-zipit}.

The local factor at $\infty$ on the right-hand side of~\eqref{eq-siadineq} is 
\begin{equation} \label{eq-infiniftyfact}
\on{Vol}(\mathsf{mon}(N(\BZ) \backslash \Sigma_\infty)_X) = |f_0|^{\frac{n(n-1)}{2}} \cdot \on{Vol}(N(\BZ) \backslash (\Sigma_\infty)_X),
\end{equation}
where the equality follows from computing the Jacobian determinant of the map $\mathsf{mon}$. To compute the factors $\mc{J}(A_0, \Sigma_p)$ at primes $p$, we split into cases depending on the parity of $p$ and value of $r$.

\subsubsection{Case $(i)$: $p$ is odd} \label{sec-oddfinalcalcs}

  By~\cite[p.~104-105]{MR1478672}, there is precisely one symmetric bilinear form over $\BZ_p$ having rank $n$ and determinant $r  (-1)^{\lfloor \frac{n}{2} \rfloor}$ up to $G_n(\BZ_p)$-equivalence, so for every $F \in \Sigma_p$, we have by~\eqref{eq-crubi} in combination \mbox{with Theorem~\ref{thm-replace} that}
\begin{align} \label{eq-calcorbits}
& \#\{T \in \mc{G}_{A_0}(\BZ_p) \backslash \scr{V}_{A_0}(\Sigma_p) : \on{ch}_T(x) = F_{\mathsf{mon}}(x,1)\} = \#\mathsf{orb}_{F,r}(H_{F,r}^*) = \\
&  \begin{cases}  \tfrac{1}{2}  \#R_F^\times[2], & \text{ if $F$ is not evenly ramified,} \\
\#R_F^\times[2], & \text{ if $F$ is evenly ramified, and $r = 1 \in \BZ_p^\times/\BZ_p^{\times 2}$,} \\
0, & \text{ if $F$ is evenly ramified, and $r \neq 1 \in \BZ_p^\times/\BZ_p^{\times 2}$,}
\end{cases} \nonumber
\end{align}
Furthermore, for every $T \in \mc{G}_{A_0}(\BZ_p) \backslash \scr{V}_{{A_0}}(\Sigma_p)$ such that $\on{ch}_T(x) = F_{\mathsf{mon}}(x,1)$, we have
\begin{equation} \label{eq-calcstabs}
\#\on{Stab}_{\mc{G}_{A_0}(\BZ_p)}(T) = \begin{cases} \#R_F^\times[2]_{\on{N}\equiv1} = \tfrac{1}{2}  \#R_F^\times[2] , & \text{ if $n$ is odd,}\\ 
\#R_F^\times[2], & \text{ if $n$ is even.} \end{cases}
\end{equation}
When $n$ is even, let $r_p(\Sigma)$ be as in Theorem~\ref{thm-main2}. Using~\eqref{eq-calcorbits} and~\eqref{eq-calcstabs}, we see that
\begin{align} \label{eq-oddnoddpfac}
   & \mc{J}(A_0,\Sigma_p) = |f_0|_p^{\frac{n(n-1)}{2}} \cdot\on{Vol}(\Sigma_p)\cdot  \\
   &\quad \begin{cases} 1, & \text{ if $n$ is odd}, \\ 1 + r_p(\Sigma), & \text{ if $n$ is even and $r = 1$ or if $r =-1$ and $p \equiv 1\,(\on{mod}\, 4)$,} \\
1 - r_p(\Sigma), & \text{ if $n$ is even and $r =-1$ and $p \equiv 3\,(\on{mod}\, 4)$,}
\end{cases} \nonumber
\end{align}
where $|-|_p = p^{-\nu_p(-)}$ denotes the standard $p$-adic absolute value.

\subsubsection{Interlude on octane values} \label{sec-interludeoctane}

 In order to treat the case where $p = 2$, we pause to introduce the necessary background on symmetric bilinear forms over $\BZ_2$. Such a form is said to be of Type I if it has a unit diagonal entry and of Type II otherwise. As explained in~\cite[\S5]{MR965484}, $G_n(\BZ_2)$-equivalence classes of symmetric bilinear forms over $\BZ_2$ of given rank, given type, and given unit determinant are classified according to their so-called \emph{octane value}, which is a number $\mathfrak{o} \in \BZ/8\BZ$. 
 
 Let $\uprho \in \BZ_2^\times$, and take a matrix $A \in \on{Sym}_2 \BZ_2^n$ of Type I with $\det A = \uprho$. Then we can replace $A$ by a $G_n(\BZ_2)$-translate that is diagonal. Upon doing so, the octane value of $A$ is simply the mod-$8$ reduction of the number of diagonal entries congruent to $1 \pmod 4$ minus the number of diagonal entries congruent to $3 \pmod 4$. It follows from~\cite[Lemma~3]{MR12640} that when $n \geq 3$, there are two options for the octane value of such a matrix $A$; we denote these options by $\mathfrak{o}_n(\uprho)^+$ and $\mathfrak{o}_n(\uprho)^-$. When $n = 2$, there are two options $\mathfrak{o}_2(\uprho)^+$ and $\mathfrak{o}_2(\uprho)^-$ if $\uprho \equiv 1 \pmod 4$, but just one option $\mathfrak{o}_2(\uprho)^+ = \mathfrak{o}_2(\uprho)^-$ if $\uprho \equiv 3 \pmod 4$. The values of $\mathfrak{o}_n(\uprho)^{\pm}$ are given in Table~\ref{tab-displayer2}.

 \begin{table}[!htbp]
    \centering
    \begin{tabular}{@{} *{5}{c} @{}}
\headercell{Determinant \\[0.1cm]
$\uprho \pmod 4$} & \multicolumn{4}{c@{}}{Dimension $n \pmod 4$}\\
\cmidrule(l){2-5}
  & $0$ & $1$ & $2$ & $3$ \\
\midrule
 $1$ & $(0,4)$ & $(1,5)$ & $(2,6)$ & $(7,3)$   \\
  $3$ & $(2,6)$ & $(7,3)$ & \,\,$(0,4)^\dagger$ & $(1,5)$  \\
\end{tabular}
\vspace*{0.25cm}
    \caption{The octane values $(\mathfrak{o}_n(\uprho)^+,\mathfrak{o}_n(\uprho)^-)$, as a function of $n \pmod 4$ and $\uprho \pmod 4$. The superscript $^\dagger$ indicates that when $n = 2$ and $\uprho \equiv 3 \pmod 4$, there is just one option for the octane value, namely $\mathfrak{o}_2(\uprho)^+ = \mathfrak{o}_2(\uprho)^- = 0$.}
\label{tab-displayer2}
\end{table}
 
 \noindent For each $G_n(\BZ_2)$-equivalence class of Type I symmetric $n \times n$ matrices, determinant $\uprho$, and octane value $\mathfrak{o}_n(\uprho)^{\pm}$, we pick a representative and denote it by $A_{n}[\uprho]^{\pm}$.

\subsubsection{Case $(ii)$: $p = 2$, $r = 1$}

 By~\cite[Theorem~80]{Siadthesis2}, if $(A,B) \in \BZ_2^2 \otimes_{\BZ_2} \Sym_2 \BZ_2^n$ is such that $A$ is a unit-determinant symmetric bilinear form of Type II, then $\on{inv}(x  A + y  B)$ is evenly ramified. Since forms $F \in \Sigma_2$ are necessarily separable modulo $2$, their monicizations are not evenly ramified, so we may assume that $A_0$ is of Type I. 
 By \S\ref{sec-interludeoctane}, we have \emph{two} options, namely $A_n\big[(-1)^{\lfloor \frac{n}{2} \rfloor}\big]^{\pm}$, for the $G_n(\BZ_2)$-equivalence class of $A_0$. We now split into sub-cases depending on the parity of $f_0$.

\vspace*{0.1in}
\noindent \emph{Case $(ii)(a)$: $2 \nmid f_0$}. Let $\uprho \in \BZ_2^\times$, and let $\mathcal{S}
\subset \mc{F}_{n,\max}(f_0, \BZ_2)$. The next result determines the quantity $\mc{J}(A_n[\uprho]^{\pm},\mc{S})$ for ``acceptable'' choices of the set $\mc{S}$:
\begin{proposition} \label{prop-allfourvalues}
Let $n \geq 2$. With notation as above, suppose that $\mc{S}$ is the preimage in $\mc{F}_{n}(f_0, \BZ_2)$ of a set of separable forms in $\mc{F}_n(f_0, \BZ/2\BZ)$. Then we have
\begin{equation} \label{eq-indepchoiceS}
\frac{\mc{J}(A_n[\uprho]^{\pm},\mc{S})}{\on{Vol}(\mc{S})} =  
\begin{cases}
2^{\frac{n-2}{2}}  \big(2^{\frac{n-2}{2}} \pm 1\big), & \text{if $(\mathfrak{o}_n(\uprho)^+,\mathfrak{o}_n(\uprho)^-) = (0,4)$,} \\
2^{\frac{n-3}{2}}  \big(2^{\frac{n-1}{2}} \pm 1\big), & \text{if $(\mathfrak{o}_n(\uprho)^+,\mathfrak{o}_n(\uprho)^-) = (1,5)$,}  \\ 2^{n-2}, & \text{if $(\mathfrak{o}_n(\uprho)^+,\mathfrak{o}_n(\uprho)^-) = (2,6)$,}\\
2^{\frac{n-3}{2}}  \big(2^{\frac{n-1}{2}} \mp 1\big), & \text{if $(\mathfrak{o}_n(\uprho)^+,\mathfrak{o}_n(\uprho)^-) = (7,3)$,} \\ 
2, & \text{if $n = 2$ and $(\mathfrak{o}_2(\uprho)^+,\mathfrak{o}_2(\uprho)^-) = (0,0)$.} 
\end{cases}
\end{equation}
\end{proposition}
\begin{proof}
When $f_0 = 1$ and $n \geq 3$, this is proven in~\cite[Corollary~80]{Siadthesis1} and~\cite[Corollary~113]{Siadthesis2}, and it is easy to check that the proof goes through when $f_0 \in \BZ_2^\times$ is any unit, and even when $n = 2$.
\end{proof}
Note in particular that the values on the right-hand side of~\eqref{eq-indepchoiceS} are independent of the choice of $\mc{S}$. In what follows, if $A \in \on{Sym}_2 \BZ_2^n$ is $G_n(\BZ_2)$-equivalent to $A_n[\uprho]^{\pm}$, we denote by $\chi_2(A)$ the corresponding value on the right-hand side of~\eqref{eq-indepchoiceS}. Proposition~\ref{prop-allfourvalues} then implies that
\begin{equation} \label{eq-oddnevenpfac}
    \mc{J}(A_0, \Sigma_2) = \on{Vol}(\Sigma_2) \cdot \chi_2(A_0).
\end{equation}

\vspace*{0.1in}
\noindent \emph{Case $(ii)(b)$: $2 \mid f_0$}. In this case, the monicizations of forms in $\Sigma_2$ are necessarily \emph{inseparable} modulo $2$, and so the argument in Case (ii)(a) does not immediately apply. Instead, we will reduce to the setting of Case (ii)(a) as follows. 

Take $F \in \Sigma_2$, and let $F = \prod_{i = 1}^k F_i$ be a canonical factorization of $F$. Then $R_F \simeq R_{F_1} \times R_{F/F_1}$, and since $F$ is separable modulo $2$, we have that $e_1 = 1$, so $R_{F_1} \simeq \BZ_2$.
Given $(I,\alpha) \in H_{F,1}^*$, we can split $I$ as a product $I = J_{F_1} \times J_{F/F_1}$, where $J_{F_1}$ and $J_{F/F_1}$ are fractional ideals of $R_{F_1}$ and $R_{F/F_1}$, respectively. When $I$ is given the basis obtained by concatenating bases of $J_{F_1}$ and $J_{F/F_1}$, the corresponding pair $(A,B) \in \BZ_2^2 \otimes_{\BZ_2} \on{Sym}_2 \BZ_2^n$ is such that both $A$ and $B$ are block-diagonal with $1 \times 1$ blocks $A_{F_1},B_{F_1}$ followed by $(n-1) \times (n-1)$ blocks $A_{F/F_1},B_{F/F_1}$. By choosing suitable bases of $J_{F_1}$ and $J_{F/F_1}$, we may arrange that $A_{F_1} = [\begin{array}{c}\uprho \end{array}]$ for some $\uprho \in \{1,3,5,7\} \subset \BZ_2^\times$. 

Now, from the proof of Theorem~\ref{thm-theconstruction}, we know that $-A_{F/F_1}^{-1}B_{F/F_1}$ is the matrix of multiplication by $f_0\theta \in R_{F/F_1}$ on the basis of $I_{F/F_1}$. Since $F/F_1$ is monic, we have $\theta \in R_{F/F_1}$, so $B_{F/F_1} \equiv 0 \pmod{f_0}$. One easily checks that the map sending $(A,B)$ to $(A_{F/F_1},f_0^{-1}  B_{F/F_1})$ induces a bijection
\begin{equation} \label{eq-seqidentif}
\mathsf{orb}_{F,1}(H_{F,1}^*) \leftrightsquigarrow \bigsqcup_{\uprho \in \{1,3,5,7\}} \mathsf{orb}_{F/F_1,(-1)^n  \uprho^{-1}}\big(H_{F/F_1,(-1)^n  \uprho^{-1}}^*\big)
\end{equation}
Using the bijection in~\eqref{eq-seqidentif} to evaluate $\mc{J}\big(A_n\big[(-1)^{\lfloor \frac{n}{2} \rfloor}\big]^{\pm},\Sigma_2\big)$, we find that
 \begin{align} \label{eq-thebigsplitter}
    & \mc{J}\big(A_n\big[(-1)^{\lfloor \frac{n}{2} \rfloor}\big]^{\pm},\Sigma_2\big) = \\
    & \frac{1}{2} \, |f_0|^{\frac{n(n-1)}{2}} \sum_{\uprho \in \{1,3,5,7\}}  \begin{cases} \mc{J}\big(A_{n-1}\big[(-1)^{\lfloor \frac{n}{2} \rfloor}  \uprho^{-1}\big]^{\pm},\Sigma_2'\big), &\hspace*{-3pt} \text{if $\uprho \equiv 1\,(\on{mod}4)$,} \\ 
    \mc{J}\big(A_{n-1}\big[(-1)^{\lfloor \frac{n}{2} \rfloor}  \uprho^{-1}\big]^{\mp},\Sigma_2'\big), &\hspace*{-3pt} \text{if $\uprho \equiv 3\,(\on{mod}4)$,} \nonumber
    \end{cases}
\end{align}
where the factor of $\frac{1}{2}$ accounts for the fact that $G_n$ switches between $\on{SL}_n$ and $\on{SL}_n^{\pm}$ depending on the parity of $n$, and where $\Sigma_2' \subset \mc{F}_{n-1,\max}(1,\BZ_2)$ is the preimage of the set $\{\ol{F}/\ol{F}_1 : F \in \Sigma_2 \} \subset \mc{F}_{n-1}(1, \BZ/2\BZ)$. Because $\Sigma_2$ is ``acceptable,'' every element of $\Sigma_2'$ is separable modulo $2$, so we can use Proposition~\ref{prop-allfourvalues} to compute the right-hand side of~\eqref{eq-thebigsplitter}. Upon doing so, and upon observing that $\on{Vol}(\Sigma_2') = \on{Vol}(\Sigma_2)$, we deduce that
\begin{equation} \label{eq-oddnevenpfac2}
    \mc{J}(A_0, \Sigma_2) = |f_0|_2^{\frac{n(n-1)}{2}} \cdot\on{Vol}(\Sigma_2) \cdot \chi_2(A_0).
\end{equation}

\subsubsection{Case $(iii)$: $p = 2$, $r = -1$}
By \S\ref{sec-interludeoctane}, we have \emph{two} options, namely $A_n\big[(-1)^{1+\lfloor \frac{n}{2} \rfloor}\big]^{\pm}$, for the $G_n(\BZ_2)$-equivalence class of $A_0$. But $A_n\big[(-1)^{1+\lfloor \frac{n}{2}\rfloor}\big]^+$ is $G_n(\BZ_2)$-equivalent to $-A_n\big[(-1)^{1+\lfloor \frac{n}{2}\rfloor}\big]^-$, so the map $(A,B) \mapsto (-A,-B)$ defines a bijection between $G_n(\BZ_2)$-orbits of $(A,B) \in \mathsf{orb}_{F,-1}(H_{F,-1}^*)$ such that $A$ is $G_n(\BZ_2)$-equivalent to $A_n\big[(-1)^{1+\lfloor \frac{n}{2}\rfloor}\big]^+$ and $G_n(\BZ_2)$-orbits of $(A,B) \in \mathsf{orb}_{F,-1}(H_{F,-1}^*)$ such that $A$ is $G_n(\BZ_2)$-equivalent to $A_n\big[(-1)^{1+\lfloor \frac{n}{2}\rfloor}\big]^-$. By Theorem~\ref{thm-replace}, we have that $\#\mathsf{orb}_{F,-1}(H_{F,-1}^*) = 2^{n-2}  \#R_F^\times[2] = 2^{m+n-2}$ and that $\#\on{Stab}_{\mc{G}_{A_0}(\BZ_2)}(T) = \#R_F^\times[2] = 2^m$
for every $T \in \mc{G}_{A_0} \backslash \scr{V}_{A_0}(\Sigma_2)$, so
\begin{align} \label{eq-evenpevennfacneg}
     & \mc{J}(A_0,\Sigma_2) = |f_0|_2^{\frac{n(n-1)}{2}} \cdot \on{Vol}(\Sigma_2) \cdot  \chi_2(A_0).
\end{align}
Comparing~\eqref{eq-oddnevenpfac},~\eqref{eq-oddnevenpfac2}, and~\eqref{eq-evenpevennfacneg}, we see that~\eqref{eq-evenpevennfacneg} holds regardless of whether $2 \mid f_0$ or $r = \pm 1$.

\subsection{The final step} \label{sec-zipit}

We now combine the calculations of the local factors in \S\ref{sec-diffsieves} with the asymptotic from Theorem~\ref{thm-usesiad} to prove the results in \S\ref{sec-intromain}.

\begin{proof}[Proof of Theorem~\ref{thm-main1}]
Fix an odd integer $n \geq 3$ and a real signature $(r_1, r_2)$ of a degree-$n$ field. Let $\Sigma$ be an acceptable family of local specifications such that $\mc{F}_n(f_0,\Sigma) \subset \mc{F}_{n,\max}^{r_1,r_2}(f_0, \BZ)$. Given an irreducible form $F \in \mc{F}_n(f_0,\Sigma)$, the set $\mathsf{orb}_{F,1}(H_{F,1}^*)$ splits as
\begin{equation} \label{eq-distalpart}
\mathsf{orb}_{F,1}(H_{F,1}^*) =\mathsf{orb}_{F,1}(H_{F,1}^*)^{\on{irr}} \sqcup \mathsf{orb}_{F,1}(H_{F,1}^*)^{\on{red}},
\end{equation}
where $\mathsf{orb}_{F,1}(H_{F,1}^*)^{\on{irr}}$ is the subset of $G_n(\BZ)$-orbits that are non-distinguished over $\BQ$ and where $\mathsf{orb}_{F,1}(H_{F,1}^*)^{\on{red}}$ is its complement. 

Let $f_0$ factor as $f_0 = m^2k$ where $k$ is squarefree, and let $r_p(\Sigma)$ be defined as in Theorem~\ref{thm-main1}. It follows from Propositions~\ref{prop-irf2},~\ref{prop-adelic},~\ref{prop-whosdist}, and~\ref{prop-pnotk}, and Theorem~\ref{thm-sqfrval} that, on the one hand,
\begin{align} \label{eq-propdist}
& \sum_{F \in \mc{F}_n(f_0,\Sigma)_X} \#\mathsf{orb}_{F,1}(H_{F,1}^*)^{\on{red}} = \on{Vol}(N(\BZ) \backslash (\Sigma_\infty)_X)  \cdot \prod_p \on{Vol}(\Sigma_p) \cdot \prod_{p \mid k} r_p(\Sigma) + o\big(X^{\frac{n(n+1)}{2}-1}\big). 
\end{align}
Let $\scr{L}_\BZ^+$ denote the set of $G_n(\BZ)$-equivalence classes of symmetric $n \times n$ integer matrices of determinant $(-1)^{\lfloor \frac{n}{2}\rfloor}$. On the other hand, combining Theorem~\ref{thm-usesiad} with~\eqref{eq-infiniftyfact},~\eqref{eq-oddnoddpfac}, and~\eqref{eq-evenpevennfacneg}, simplifying the result using the identity $|f_0|  \prod_p |f_0|_p = 1$, and rearranging yields:
\begin{align}
& \sum_{F \in \mc{F}_n(f_0,\Sigma)_X} \#\mathsf{orb}_{F,1}(H_{F,1}^*)^{\on{irr}}  = \sum_{A_0 \in \mathscr{L}_{\BZ}^+}\sum_\sigma  \#\big((N \times \mc{G}_{A_0})(\BZ) \backslash \scr{V}_{{A_0}}^\sigma(\Sigma)_X^{\on{irr}}\big) \leq\label{eq-almostatthegoal} \\
&  \qquad 2^{1 - r_1  - r_2} \cdot\on{Vol}(N(\BZ) \backslash (\Sigma_\infty)_X) \cdot \prod_p \on{Vol}(\Sigma_p) \cdot \nonumber \\
& \qquad\qquad\qquad\qquad\qquad \sum_{A_0 \in \mathscr{L}_{\BZ}^+}\tau_{\mc{G}_{A_0}}\cdot \chi_2(A_0) \cdot \sum_{\sigma}  \chi_{A_0}(\sigma)  + o\big(X^{\frac{n(n+1)}{2}-1}\big).  \nonumber
\end{align}
 The fact that the parametrization in \S\ref{sec-bigconstruct} reduces the non-monic case to the monic case now pays dividends: the sum over $A_0 \in \scr{L}_\BZ^+$ in the third line of~\eqref{eq-almostatthegoal} arises in the work of Siad on monic forms, so we need not re-compute it. Indeed, it follows from~\cite[\S9, p.~35]{Siadthesis1} that
\begin{align} \label{eq-whatsiad35says}
   \sum_{A_0 \in \mathscr{L}_{\BZ}^+}\tau_{\mc{G}_{A_0}}\cdot \chi_2(A_0) \cdot \sum_{\sigma}  \chi_{A_0}(\sigma) & = 2^{n+r_1-2} + 2^{\frac{n+r_1-2}{2}} = \big(2^{r_1 + r_2 - 1}\big)^2 + 2^{r_1 + r_2 -1}.
\end{align}
Substituting~\eqref{eq-whatsiad35says} into~\eqref{eq-almostatthegoal}, we find that
 \small
\begin{align} \label{eq-finaloddcounts}
    & \sum_{F \in \mc{F}_n(f_0,\Sigma)_X} \#\mathsf{orb}_{F,1}(H_{F,1}^*)^{\on{irr}} \leq \big(1 + 2^{r_1 + r_2 -1}\big) \cdot \on{Vol}(N(\BZ) \backslash (\Sigma_\infty)_X) \cdot \prod_p \on{Vol}(\Sigma_p) + o\big(X^{\frac{n(n+1)}{2}-1}\big). 
\end{align}
\normalsize
Using Theorem~\ref{thm-sqfrval} together with~\eqref{eq-distalpart},~\eqref{eq-propdist}, and~\eqref{eq-finaloddcounts}, we find that
\small
\begin{align}
    & \underset{F \in \mc{F}_n(f_0, \Sigma)}{\on{Avg}}\,\, \#\mathsf{orb}_{F,1}(H_{F,1}^*) \leq \nonumber \\
    & \lim_{X \to \infty} \frac{\left(1 + 2^{r_1 + r_2 -1} + \prod_{p \mid k}r_p(\Sigma) \right) \cdot \on{Vol}(N(\BZ) \backslash (\Sigma_\infty)_X) \cdot \prod_p \on{Vol}(\Sigma_p)}{\on{Vol}(N(\BZ) \backslash (\Sigma_\infty)_X) \cdot \prod_p \on{Vol}(\Sigma_p)} \nonumber \\
    & = 1 + 2^{r_1 + r_2 -1} + \prod_{p \mid k}r_p(\Sigma). \label{eq-cancellate}
\end{align}
\normalsize
The inequalities above are equalities when the estimate~\eqref{eq-conjest} holds. Theorem~\ref{thm-main1} then follows from substituting the bound in~\eqref{eq-cancellate} into the formula in Lemma~\ref{lem-classtoh}.
\end{proof}

\begin{proof}[Proof of Theorem~\ref{thm-main2}]

Fix an even integer $n \geq 4$ and a real signature $(r_1, r_2)$ of a degree-$n$ field. Let $\Sigma$ be an acceptable family of local specifications such that $\mc{F}_n(f_0,\Sigma) \subset \mc{F}_{n,\max}^{r_1,r_2}(f_0,\BZ)$. Given an irreducible form \mbox{$F \in \mc{F}_n(f_0,\Sigma)$}, \eqref{eq-distalpart} continues to hold in this setting. It follows from Proposition~\ref{prop-irf2} and Remark~\ref{rem-snfields} that
\begin{align} \label{eq-propdist2}
\sum_{F \in \mc{F}_n(f_0,\Sigma)_X} \#\mathsf{orb}_{F,1}(H_{F,1}^*)^{\on{red}} & = \sum_{F \in \mc{F}_n(f_0,\Sigma)_X} 2^{\#\mathsf{ev}_F+1} + o\big(X^{\frac{n(n+1)}{2}}\big).
\end{align}
The following proposition gives a formula for the sum on the right-hand side of~\eqref{eq-propdist2}:
\begin{proposition} \label{prop-geosieve}
We have that
\begin{align} \label{eq-propdistgeo}
 \sum_{F \in \mc{F}_n(f_0,\Sigma)_X} 2^{\#\mathsf{ev}_F+1}
& = 2 \cdot \on{Vol}(N(\BZ)\backslash (\Sigma_\infty)_X) \cdot  \prod_p \on{Vol}(\Sigma_p) \cdot (1 + r_p(\Sigma)) + o\big(X^{\frac{n(n+1)}{2}}\big). 
\end{align}
\end{proposition}
\begin{proof}[Proof of Proposition~\ref{prop-geosieve}]
Let $M > 0$ be a constant. First, it is clear that~\eqref{eq-propdistgeo} holds if we make the following adjustments: (1) replace $\mathsf{ev}_F$ with $\{p \in \mathsf{ev}_F : p < M\}$, and (2) remove the factor $1 + r_p(\Sigma)$ for each prime $p > M$. Taking the limit as $M \to \infty$, we see that~\eqref{eq-propdistgeo} holds with ``$\geq$'' instead of ``$=$.'' It thus remains to prove~\eqref{eq-propdistgeo} with ``$\leq$'' instead of ``$=$.''

Let $\delta,\varepsilon > 0$ be constants. Since even ramification is a codimension-$\frac{n}{2}$ condition on the space of binary $n$-ic forms with leading coefficient $f_0$, an application of the geometric sieve (see~\cite[proof of Theorem~3.3]{geosieve}, as well as the paragraph concerning inhomogeneous heights in~\cite[p.~4]{sqfrval}) yields
\begin{align} \label{eq-geosievebounds}
    & \#\left\{F \in \mc{F}_n(f_0, \BZ)_X : \begin{array}{c}\text{$\exists$ squarefree $m \in \BZ$ such that $m > X^\delta$ and} \\ \text{$p \in \mathsf{ev}_F$ for every $p \mid m$}\end{array}\right\} \ll  \frac{X^{\frac{n(n+1)}{2}-1}}{X^{\delta  \frac{n-2}{2}}} + X^{\frac{n^2}{2}+\varepsilon}. 
\end{align}
Now, if $\on{H}(F) < X$, then the number of prime factors of the discriminant of $F$ is $O(\log  X/\log \log X)$, so on the left-hand side of~\eqref{eq-propdistgeo}, each $F$ is counted with multiplicity at most $2^{O(\log X/\log \log X)} = O(X^\varepsilon)$. Combining this bound with~\eqref{eq-geosievebounds}, we see that forms $F$ that are evenly ramified at a set of primes whose product exceeds $X^\delta$ contribute negligibly to the left-hand side of~\eqref{eq-propdistgeo}.

 Let $r_p(\Sigma)$ be defined as in Theorem~\ref{thm-main2}, let $M > 0$ be a constant, and let $\Pi$ be the set of squarefree numbers up to $X^\delta$. We now bound the contribution from forms $F$ that are evenly ramified at a set of primes whose product is at most $X^\delta$. We have
\begin{align}
    \sum_{\substack{F \in \mc{F}_n(f_0,\Sigma)_X \\ \prod_{p \in \mathsf{ev}_F} p \leq X^\delta}} 2^{\#\mathsf{ev}_F} & \leq 
    \on{Vol}(N(\BZ) \backslash (\Sigma_\infty)_X) \cdot \prod_{p < M} \on{Vol}(\Sigma_p) \cdot \sum_{m \in \Pi} \bigg[\prod_{p \mid m} r_p(\Sigma) + O\big(X^{\frac{n(n+1)}{2}-3}\big)\bigg] \nonumber \\
    & \leq \on{Vol}(N(\BZ) \backslash (\Sigma_\infty)_X) \cdot \prod_{p < M} \on{Vol}(\Sigma_p) \cdot \prod_p (1 + r_p(\Sigma)) + o\big(X^{\frac{n(n+1)}{2}-1}\big). \label{eq-truncbound}
\end{align}
Taking the limit as $M \to \infty$ in~\eqref{eq-truncbound} proves~\eqref{eq-propdistgeo} with ``$=$'' replaced by ``$\leq$.''
\end{proof}

Let $\scr{L}_\BZ^+$ denote the set of $G_n(\BZ)$-equivalence classes of symmetric $n \times n$ integer matrices of determinant $(-1)^{\lfloor \frac{n}{2}\rfloor}$. On the other hand, combining Theorem~\ref{thm-usesiad} with~\eqref{eq-infiniftyfact},~\eqref{eq-oddnoddpfac}, and~\eqref{eq-evenpevennfacneg}, simplifying the result using the identity $|f_0|  \prod_p |f_0|_p = 1$, and rearranging yields:
\begin{align}
& \sum_{F \in \mc{F}_n(f_0,\Sigma)_X} \#\mathsf{orb}_{F,1}(H_{F,1}^*)^{\on{irr}}  = \sum_{A_0 \in \mathscr{L}_{\BZ}^+}\sum_\sigma  \#\big((N \times \mc{G}_{A_0})(\BZ) \backslash \scr{V}_{{A_0}}^\sigma(\Sigma)_X^{\on{irr}}\big) \leq\label{eq-almostatthegoal2} \\
&  \qquad 2^{1 - r_1  - r_2} \cdot\on{Vol}(N(\BZ) \backslash (\Sigma_\infty)_X) \cdot \prod_p \on{Vol}(\Sigma_p) \cdot (1 + r_p(\Sigma)) \cdot \nonumber \\
& \qquad\qquad\qquad\qquad\qquad  \sum_{A_0 \in \mathscr{L}_{\BZ}^+}\tau_{\mc{G}_{A_0}}\cdot \chi_2(A_0) \cdot \sum_{\sigma}  \chi_{A_0}(\sigma)  + o\big(X^{\frac{n(n+1)}{2}-1}\big). \nonumber
\end{align}
\normalsize
 Again, the sum over $A_0 \in \scr{L}_\BZ^+$ in the third line of~\eqref{eq-almostatthegoal2} arises in the work of Siad on monic forms, so we need not re-compute it. Indeed, it follows from~\cite[\S10.1, p.~35~and~\S11.7, p.~45]{Siadthesis2} that
\begin{align} \label{eq-whatsiad35says2}
   & \sum_{A_0 \in \mathscr{L}_{\BZ}^+}\tau_{\mc{G}_{A_0}}\cdot \chi_2(A_0) \cdot \sum_{\sigma}  \chi_{A_0}(\sigma) = \\ & \qquad\qquad\qquad \begin{cases} 2^{n-1} + 2^{\frac{n}{2}} = 2^{2r_2-1} + 2^{r_2}, & \text{if $r_1 = 0$,} \\ 2^{n+r_1-2} + 2^{\frac{n+r_1}{2}} = \big(2^{r_1 + r_2 - 1}\big)^2 + 2^{r_1 + r_2}, & \text{if $r_1 > 0$.} \end{cases} \nonumber
\end{align}
Substituting~\eqref{eq-whatsiad35says2} into~\eqref{eq-almostatthegoal2}, we find that
 \small
\begin{align} \label{eq-finalevencounts2}
    \sum_{F \in \mc{F}_n(f_0,\Sigma)_X} \#\mathsf{orb}_{F,1}(H_{F,1}^*)^{\on{irr}} & \leq \left.\begin{cases}  2 + 2^{r_2}, & \text{if $r_1 = 0$,} \\  2 + 2^{r_1 + r_2-1}, & \text{if $r_1 > 0$} \end{cases}  \right\} \cdot \on{Vol}(N(\BZ) \backslash (\Sigma_\infty)_X) \cdot \\
    & \qquad\qquad\quad \prod_p \on{Vol}(\Sigma_p) \cdot (1 + r_p(\Sigma))  + o\big(X^{\frac{n(n+1)}{2}-1}\big). \nonumber
\end{align}
\normalsize
Using Theorem~\ref{thm-sqfrval} together with~\eqref{eq-distalpart},~\eqref{eq-propdist2}, and~\eqref{eq-finalevencounts2}, we find that
\small
\begin{align}
    & \underset{F \in \mc{F}_n(f_0, \Sigma)}{\on{Avg}}\,\, \#\mathsf{orb}_{F,1}(H_{F,1}^*) \leq \nonumber \\
    & \lim_{X \to \infty} \frac{\left.\begin{cases}  4 + 2^{r_2}, & \text{if $r_1 = 0$,} \\  4 + 2^{r_1 + r_2-1}, & \text{if $r_1 > 0$} \end{cases}  \right\} \cdot \on{Vol}(N(\BZ) \backslash (\Sigma_\infty)_X) \cdot \prod_p \on{Vol}(\Sigma_p) \cdot (1 + r_p(\Sigma))}{\on{Vol}(N(\BZ) \backslash (\Sigma_\infty)_X) \cdot \prod_p \on{Vol}(\Sigma_p)} = \nonumber \\
    & \left.\begin{cases}  4 + 2^{r_2}, & \text{if $r_1 = 0$,} \\  4 + 2^{r_1 + r_2-1}, & \text{if $r_1 > 0$} \end{cases}  \right\} \cdot \prod_{p > 2} (1 + r_p(\Sigma)). \label{eq-cancellate2}
\end{align}
\normalsize

Having dealt with the average of $\#\mathsf{orb}_{F,1}(H_{F,1}^*)$, we now turn our attention to the average of $\#\mathsf{orb}_{F,-1}(H_{F,-1}^*)$. Since $\#\mathsf{orb}_{F,-1}(H_{F,-1}^*) = 0$ if $r_1 = 0$, assume $r_1 > 0$, and let $F \in \mc{F}_n(f_0, \Sigma)$ be irreducible. As explained in \S\ref{sec-uniquedistinged}, every element of $\mathsf{orb}_{F,-1}(H_{F,-1}^*)$ is non-distinguished. Thus, if we let $\scr{L}_\BZ^-$ denote the set of $G_n(\BZ)$-equivalence classes of symmetric $n \times n$ integer matrices of determinant $(-1)^{1+\lfloor \frac{n}{2} \rfloor}$, then combining Theorem~\ref{thm-usesiad} with~\eqref{eq-infiniftyfact},~\eqref{eq-oddnoddpfac}, and~\eqref{eq-evenpevennfacneg}, simplifying the result using the identity $|f_0|  \prod_p |f_0|_p = 1$, and rearranging yields:
\begin{align}
& \sum_{F \in \mc{F}_n(f_0,\Sigma)_X} \#\mathsf{orb}_{F,-1}(H_{F,-1}^*)  = \sum_{A_0 \in \mathscr{L}_{\BZ}^-}\sum_\sigma  \#\big((N \times \mc{G}_{A_0})(\BZ) \backslash \scr{V}_{{A_0}}^\sigma(\Sigma)_X^{\on{irr}}\big) \leq\label{eq-almostatthegoal3} \\
&  \qquad 2^{1 - r_1  - r_2} \cdot\on{Vol}(N(\BZ) \backslash (\Sigma_\infty)_X) \cdot \prod_p \on{Vol}(\Sigma_p) \cdot  \big(1 + (-1)^{\frac{p-1}{2}} \cdot r_p(\Sigma)\big) \cdot \nonumber \\
& \qquad\qquad\qquad\qquad\qquad \sum_{A_0 \in \mathscr{L}_{\BZ}^-}\tau_{\mc{G}_{A_0}}\cdot \chi_2(A_0) \cdot \sum_{\sigma}  \chi_{A_0}(\sigma)  + o\big(X^{\frac{n(n+1)}{2}-1}\big). \nonumber
\end{align}
\normalsize
 Again, the sum over $A_0 \in \scr{L}_\BZ^-$ in the third line of~\eqref{eq-almostatthegoal3} arises in the work of Siad on monic forms, so we need not re-compute it. Indeed, it follows from~\cite[\S11.7, p.~45]{Siadthesis2} that
\begin{equation} \label{eq-whatsiad35says3}
   \sum_{A_0 \in \mathscr{L}_{\BZ}^-}\tau_{\mc{G}_{A_0}}\cdot \chi_2(A_0) \cdot \sum_{\sigma}  \chi_{A_0}(\sigma) =  2^{n+r_1-2} = \big(2^{r_1 + r_2 - 1}\big)^2.
\end{equation}
Substituting~\eqref{eq-whatsiad35says3} into~\eqref{eq-almostatthegoal3}, we find that
 \small
\begin{align} \label{eq-finalevencounts3}
    & \sum_{F \in \mc{F}_n(f_0,\Sigma)_X} \#\mathsf{orb}_{F,-1}(H_{F,-1}^*) \leq 2^{r_1 + r_2 - 1} \cdot \on{Vol}(N(\BZ) \backslash (\Sigma_\infty)_X) \cdot \\
    & \qquad\qquad\qquad\qquad\qquad \prod_p \on{Vol}(\Sigma_p) \cdot \big(1 + (-1)^{\frac{p-1}{2}} \cdot r_p(\Sigma)\big)   + o\big(X^{\frac{n(n+1)}{2}-1}\big). \nonumber
\end{align}
\normalsize
Using Theorem~\ref{thm-sqfrval} together with~\eqref{eq-finalevencounts3}, we find that
\small
\begin{align}
    & \underset{F \in \mc{F}_n(f_0, \Sigma)}{\on{Avg}}\,\, \#\mathsf{orb}_{F,-1}(H_{F,-1}^*) \leq \nonumber \\
    & \lim_{X \to \infty} \frac{ 2^{r_1 + r_2 - 1} \cdot \on{Vol}(N(\BZ) \backslash (\Sigma_\infty)_X) \cdot \prod_p \on{Vol}(\Sigma_p) \cdot \big(1 + (-1)^{\frac{p-1}{2}} \cdot r_p(\Sigma)\big) }{\on{Vol}(N(\BZ) \backslash (\Sigma_\infty)_X) \cdot \prod_p \on{Vol}(\Sigma_p)} = \nonumber \\
    & 2^{r_1 + r_2 - 1} \cdot \prod_{p > 2} \big(1 + (-1)^{\frac{p-1}{2}} \cdot r_p(\Sigma)\big). \label{eq-cancellate3}
\end{align}
\normalsize
The inequalities above are equalities when the estimate~\eqref{eq-conjest} holds. The statements about the class group in~\eqref{eq-even11} and~\eqref{eq-even22} then follow from substituting the bound in~\eqref{eq-cancellate2}, as well as the bound in~\eqref{eq-cancellate3} (or simply $0$ when $r_1 = 0$), into the formula in Lemma~\ref{lem-classtohevenlem}.  
  Since we have that $\#\on{Cl}(R_F)[2] = \#\on{Cl}^+(R_F)[2]$ when $r_1 = 0$, the statement about the narrow class group in~\eqref{eq-even11} follows from the corresponding statement \mbox{about the class group.}

We now consider the average of $\#\mathsf{orb}_{F,1}(H_{F,1}^{*,+})$. Let $r_1 > 0$, and let $F \in \mc{F}_n(f_0,\Sigma)$. The set $\mathsf{orb}_{F,1}(H_{F,1}^{*,+})$ splits as
\begin{equation} \label{eq-distalpart4}
\mathsf{orb}_{F,1}(H_{F,1}^{*,+}) =\mathsf{orb}_{F,1}(H_{F,1}^{*,+})^{\on{irr}} \sqcup \mathsf{orb}_{F,1}(H_{F,1}^{*,+})^{\on{red}},
\end{equation}
where $\mathsf{orb}_{F,1}(H_{F,1}^{*,+})^{\on{irr}}$ is the subset of $G_n(\BZ)$-orbits that are non-distinguished over $\BQ$ and $\mathsf{orb}_{F,1}(H_{F,1}^{*,+})^{\on{red}}$ is its complement. Because the elements of $\mathsf{orb}_{F,1}(H_{F,1}^{*,+})^{\on{red}}$ are distinguished over $\BQ$, they are necessarily distinguished over $\BR$, so under the identification~\eqref{eq-realkeefe}, they all correspond to the same totally positive element $\sigma_0 = (1,\dots,1) \in (\BR^\times/\BR^{\times2})^{r_1}$. By the proof of Proposition~\ref{prop-irf2} and the remark following it, the set $\mathsf{orb}_{F,1}(H_{F,1}^{*,+})^{\on{red}}$ has size $2^{\#\mathsf{ev}_F}$ for $100\%$ of forms $F$. It then follows from Theorem~\ref{thm-sqfrval} and Proposition~\ref{prop-geosieve} that, on the one hand, we have
\begin{align} \label{eq-propdist4}
\sum_{F \in \mc{F}_n(f_0,\Sigma)_X} \#\mathsf{orb}_{F,1}(H_{F,1}^{*,+})^{\on{red}} & = \sum_{F \in \mc{F}_n(f_0,\Sigma)_X} 2^{\#\mathsf{ev}_F} + o\big(X^{\frac{n(n+1)}{2}}\big) \\
& = \on{Vol}(N(\BZ)\backslash (\Sigma_\infty)_X) \cdot \prod_p \on{Vol}(\Sigma_p) \cdot (1 + r_p(\Sigma)) + o\big(X^{\frac{n(n+1)}{2}}\big). \nonumber
\end{align}
On the other hand, combining Theorem~\ref{thm-usesiad} with~\eqref{eq-infiniftyfact},~\eqref{eq-oddnoddpfac}, and~\eqref{eq-evenpevennfacneg}, simplifying the result using the identity $|f_0| \prod_p |f_0|_p = 1$, and rearranging yields:
\begin{align}
& \sum_{F \in \mc{F}_n(f_0,\Sigma)_X} \#\mathsf{orb}_{F,1}(H_{F,1}^{*,+})^{\on{irr}}  = \sum_{A_0 \in \mathscr{L}_{\BZ}^+}  \#\big((N \times \mc{G}_{A_0})(\BZ) \backslash \scr{V}_{{A_0}}^{\sigma_0}(\Sigma)_X^{\on{irr}}\big) \leq\label{eq-almostatthegoal4} \\
&  \qquad 2^{1 - r_1  - r_2} \cdot\on{Vol}(N(\BZ) \backslash (\Sigma_\infty)_X) \cdot \prod_p \on{Vol}(\Sigma_p) \cdot (1 + r_p(\Sigma)) \cdot \nonumber \\
& \qquad\qquad\qquad\qquad\qquad \sum_{A_0 \in \mathscr{L}_{\BZ}^+}\tau_{\mc{G}_{A_0}}\cdot \chi_2(A_0) \cdot \chi_{A_0}(\sigma_0)  + o\big(X^{\frac{n(n+1)}{2}-1}\big). \nonumber
\end{align}
\normalsize
 Once again, the sum over $A_0 \in \scr{L}_\BZ^+$ in the third line of~\eqref{eq-almostatthegoal2} arises in the work of Siad on monic forms, so we need not re-compute it. Indeed, it follows from~\cite[\S10.1, p.~35~and~\S11.7, p.~45]{Siadthesis2} that
\begin{equation} \label{eq-whatsiad35says4}
   \sum_{A_0 \in \mathscr{L}_{\BZ}^+}\tau_{\mc{G}_{A_0}}\cdot \chi_2(A_0) \cdot \chi_{A_0}(\sigma_0) =  2^{n-1} + 2^{\frac{n}{2}} = 2^{r_1+2r_2-1} + 2^{\frac{r_1 + 2r_2}{2}}.
\end{equation}
Substituting~\eqref{eq-whatsiad35says4} into~\eqref{eq-almostatthegoal4}, we find that
 \small
\begin{align} \label{eq-finalevencounts4}
    & \sum_{F \in \mc{F}_n(f_0,\Sigma)_X} \#\mathsf{orb}_{F,1}(H_{F,1}^{*,+})^{\on{irr}} \leq \big(2^{r_2} + 2^{\frac{2-r_1 }{2}}\big) \cdot \on{Vol}(N(\BZ) \backslash (\Sigma_\infty)_X) \cdot \\
    & \qquad\qquad\qquad\qquad\qquad \prod_p \on{Vol}(\Sigma_p) \cdot (1 + r_p(\Sigma))  + o\big(X^{\frac{n(n+1)}{2}-1}\big). \nonumber
\end{align}
\normalsize
Using Theorem~\ref{thm-sqfrval} together with~\eqref{eq-distalpart4},~\eqref{eq-propdist4}, and~\eqref{eq-finalevencounts4}, we find that
\small
\begin{align}
    & \underset{F \in \mc{F}_n(f_0, \Sigma)}{\on{Avg}}\,\, \#\mathsf{orb}_{F,1}(H_{F,1}^{*,+}) \leq \nonumber \\
    & \lim_{X \to \infty} \frac{\big(1+2^{r_2} + 2^{\frac{2-r_1 }{2}}\big)  \cdot \on{Vol}(N(\BZ) \backslash (\Sigma_\infty)_X) \cdot \prod_p \on{Vol}(\Sigma_p) \cdot (1 + r_p(\Sigma))}{\on{Vol}(N(\BZ) \backslash (\Sigma_\infty)_X) \cdot \prod_p \on{Vol}(\Sigma_p)} = \nonumber \\
    & \big(1+2^{r_2} + 2^{\frac{2-r_1 }{2}}\big) \cdot \prod_{p > 2} (1 + r_p(\Sigma)). \label{eq-cancellate4}
\end{align}
\normalsize
The inequalities above are equalities when the estimate~\eqref{eq-conjest} holds. The statement about the narrow class group in~\eqref{eq-even44} follows from substituting the bound in~\eqref{eq-cancellate4} into the formula in Lemma~\ref{lem-narrow}.
\end{proof}

\section*{Acknowledgments}

 \noindent We thank Manjul Bhargava for suggesting the questions that led to this paper, for providing invaluable advice and encouragement throughout our research, and for offering detailed comments on an earlier draft. We are also grateful to Levent Alp\"{o}ge, Peter Sarnak, Arul Shankar, Artane Siad, Melanie Matchett Wood, and Shou-Wu Zhang for engaging with us in several enlightening conversations. We thank the anonymous referee for making numerous insightful comments and suggestions on an earlier draft of this paper. This work was supported by the Paul and Daisy Soros Fellowship, the NSF Graduate Research Fellowship, and NSF Award No.~2202839.

	\bibliographystyle{plain}
    \bibliography{bibfile}

\begin{thebibliography}{10}

\bibitem{MR2367325}
{A}. Ash, {J}. Brakenhoff, and {T}. Zarrabi.
\newblock Equality of polynomial and field discriminants.
\newblock {\em Experiment. Math.}, 16(3):367--374, 2007.

\bibitem{MR2081442}
{M}. Bhargava.
\newblock Higher composition laws. {II}. {O}n cubic analogues of {G}auss
  composition.
\newblock {\em Ann. of Math. (2)}, 159(2):865--886, 2004.

\bibitem{MR2183288}
{M}. Bhargava.
\newblock The density of discriminants of quartic rings and fields.
\newblock {\em Ann. of Math. (2)}, 162(2):1031--1063, 2005.

\bibitem{MR2745272}
{M}. Bhargava.
\newblock The density of discriminants of quintic rings and fields.
\newblock {\em Ann. of Math. (2)}, 172(3):1559--1591, 2010.

\bibitem{thesource}
{M}. Bhargava.
\newblock Most hyperelliptic curves over $\mathbb{Q}$ have no rational points.
\newblock {\em arXiv preprint arXiv:1308.0395}, 2013.

\bibitem{geosieve}
{M}. {B}hargava.
\newblock The geometric sieve and the density of squarefree values of invariant
  polynomials.
\newblock {\em arXiv preprint arXiv:1402.0031}, 2014.

\bibitem{MR3156850}
M.~Bhargava and B.~H. Gross.
\newblock The average size of the 2-{S}elmer group of {J}acobians of
  hyperelliptic curves having a rational {W}eierstrass point.
\newblock In {\em Automorphic representations and {$L$}-functions}, volume~22
  of {\em Tata Inst. Fundam. Res. Stud. Math.}, pages 23--91. Tata Inst. Fund.
  Res., Mumbai, 2013.

\bibitem{MR3600041}
M.~Bhargava, B.~H. Gross, and X.~Wang.
\newblock A positive proportion of locally soluble hyperelliptic curves over
  {$\Bbb Q$} have no point over any odd degree extension.
\newblock {\em J. Amer. Math. Soc.}, 30(2):451--493, 2017.
\newblock With an appendix by Tim Dokchitser and Vladimir Dokchitser.

\bibitem{BSHpreprint}
{M}. Bhargava, {J}. Hanke, and {A}. {S}hankar.
\newblock The mean number of $2$-torsion elements in class groups of
  $n$-monogenized cubic fields.
\newblock {\em arXiv preprint arXiv:2010.15744}, 2020.

\bibitem{MR3272925}
{M}. Bhargava and {A}. Shankar.
\newblock Binary quartic forms having bounded invariants, and the boundedness
  of the average rank of elliptic curves.
\newblock {\em Ann. of Math. (2)}, 181(1):191--242, 2015.

\bibitem{BSSpreprint}
{M}. Bhargava, {A}. Shankar, and {A}. Swaminathan.
\newblock The second moment of the size of the $2$-{S}elmer group of elliptic
  curves.
\newblock {\em arXiv preprint arXiv:2110.09063}, 2021.

\bibitem{sqfrval}
{M}. Bhargava, {A}. Shankar, and {X}. Wang.
\newblock Squarefree values of polynomial discriminants {I}.
\newblock {\em arXiv preprint arXiv:1611.09806}, 2016.

\bibitem{MR3369305}
{M}. Bhargava and {I}. Varma.
\newblock On the mean number of 2-torsion elements in the class groups, narrow
  class groups, and ideal groups of cubic orders and fields.
\newblock {\em Duke Math. J.}, 164(10):1911--1933, 2015.

\bibitem{MR0306119}
B.~J. Birch and J.~R. Merriman.
\newblock Finiteness theorems for binary forms with given discriminant.
\newblock {\em Proc. London Math. Soc. (3)}, 24:385--394, 1972.

\bibitem{breen}
{B}. {B}reen.
\newblock The $2$-{S}elmer group of ${S}_n$-number fields of even degree.
\newblock {\em arXiv preprint arXiv:2110.00197}, 2021.

\bibitem{MR756082}
H.~Cohen and H.~W. Lenstra, Jr.
\newblock Heuristics on class groups of number fields.
\newblock In {\em Number theory, {N}oordwijkerhout 1983 ({N}oordwijkerhout,
  1983)}, volume 1068 of {\em Lecture Notes in Math.}, pages 33--62. Springer,
  Berlin, 1984.

\bibitem{MR866103}
H.~Cohen and J.~Martinet.
\newblock Class groups of number fields: numerical heuristics.
\newblock {\em Math. Comp.}, 48(177):123--137, 1987.

\bibitem{MR1478672}
{J}.~{H}. Conway.
\newblock {\em The sensual (quadratic) form}, volume~26 of {\em Carus
  Mathematical Monographs}.
\newblock Mathematical Association of America, Washington, DC, 1997.
\newblock With the assistance of Francis Y. C. Fung.

\bibitem{MR965484}
{J}.~{H}. Conway and {N}.~{J}.~{A}. Sloane.
\newblock Low-dimensional lattices. {IV}. {T}he mass formula.
\newblock {\em Proc. Roy. Soc. London Ser. A}, 419(1857):259--286, 1988.

\bibitem{MR1348707}
H.~Darmon and A.~Granville.
\newblock On the equations {$z^m=F(x,y)$} and {$Ax^p+By^q=Cz^r$}.
\newblock {\em Bull. London Math. Soc.}, 27(6):513--543, 1995.

\bibitem{MR491593}
H.~Davenport and H.~Heilbronn.
\newblock On the density of discriminants of cubic fields. {II}.
\newblock {\em Proc. Roy. Soc. London Ser. A}, 322(1551):405--420, 1971.

\bibitem{MR2188842}
{I}. Del~Corso, {R}. Dvornicich, and {D}. Simon.
\newblock Decomposition of primes in non-maximal orders.
\newblock {\em Acta Arith.}, 120(3):231--244, 2005.

\bibitem{52815}
{M}. Emerton.
\newblock Does (the ideal class of) the different of a number field have a
  canonical square root?
\newblock MathOverflow, 2011.
\newblock URL: \url{https://mathoverflow.net/q/52815} (version: 2011-01-22).

\bibitem{MR2276261}
\'{E}. Fouvry and {J}. {K}l\"{u}ners.
\newblock On the 4-rank of class groups of quadratic number fields.
\newblock {\em Invent. Math.}, 167(3):455--513, 2007.

\bibitem{MR638719}
{E}. Hecke.
\newblock {\em Lectures on the theory of algebraic numbers}, volume~77 of {\em
  Graduate Texts in Mathematics}.
\newblock Springer-Verlag, New York-Berlin, 1981.
\newblock Translated from the German by {G}.~U.~Brauer, {J}.~R.~Goldman, and
  R.~Kotzen.

\bibitem{MR2713823}
{W}. Ho.
\newblock {\em Orbit parametrizations of curves}.
\newblock ProQuest LLC, Ann Arbor, MI, 2009.
\newblock Thesis (Ph.D.)--Princeton University.

\bibitem{MR3782066}
{W}. Ho, {A}. Shankar, and {I}. Varma.
\newblock Odd degree number fields with odd class number.
\newblock {\em Duke Math. J.}, 167(5):995--1047, 2018.

\bibitem{modfin}
{M}. Hochster.
\newblock Module-finite extensions of complete local rings.
\newblock \url{http://www.math.lsa.umich.edu/~hochster/615W14/ModFinComp.pdf},
  2014.

\bibitem{MR12640}
{B}.~{W}. Jones.
\newblock A canonical quadratic form for the ring of 2-adic integers.
\newblock {\em Duke Math. J.}, 11:715--727, 1944.

\bibitem{MR2778658}
{G}. Malle.
\newblock On the distribution of class groups of number fields.
\newblock {\em Experiment. Math.}, 19(4):465--474, 2010.

\bibitem{MR1001839}
{J}. Nakagawa.
\newblock Binary forms and orders of algebraic number fields.
\newblock {\em Invent. Math.}, 97(2):219--235, 1989.

\bibitem{MR1697859}
{J}. Neukirch.
\newblock {\em Algebraic number theory}, volume 322 of {\em Grundlehren der
  Mathematischen Wissenschaften [Fundamental Principles of Mathematical
  Sciences]}.
\newblock Springer-Verlag, Berlin, 1999.
\newblock Translated from the 1992 German original and with a note by Norbert
  Schappacher, with a foreword by G. Harder.

\bibitem{MR2833483}
{B}. Poonen and {E}. Rains.
\newblock Random maximal isotropic subspaces and {S}elmer groups.
\newblock {\em J. Amer. Math. Soc.}, 25(1):245--269, 2012.

\bibitem{jerrycordana}
{G}.~{C}. Sanjaya and {X}. Wang.
\newblock On the squarefree values of $a^4 + b^3$.
\newblock {\em arXiv preprint arXiv:2107.10380}, 2021.

\bibitem{MR3719247}
A.~Shankar and X.~Wang.
\newblock Rational points on hyperelliptic curves having a marked
  non-{W}eierstrass point.
\newblock {\em Compos. Math.}, 154(1):188--222, 2018.

\bibitem{MR3968769}
{A}.~{N}. Shankar.
\newblock 2-{S}elmer groups of hyperelliptic curves with marked points.
\newblock {\em Trans. Amer. Math. Soc.}, 372(1):267--304, 2019.

\bibitem{Siadthesis1}
{A}. Siad.
\newblock Monogenic fields with odd class number part {I}: odd degree.
\newblock {\em arXiv preprint arXiv:2011.08834}, 2020.

\bibitem{Siadthesis2}
{A}. Siad.
\newblock Monogenic fields with odd class number part {II}: even degree.
\newblock {\em arXiv preprint arXiv:2011.08842}, 2020.

\bibitem{MR2523319}
{D}. Simon.
\newblock A ``class group'' obstruction for the equation {$Cy^d=F(x,z)$}.
\newblock {\em J. Th\'{e}or. Nombres Bordeaux}, 20(3):811--828, 2008.

\bibitem{alex}
{A}. Smith.
\newblock $2^\infty$-{S}elmer groups, $2^\infty$-class groups, and {G}oldfeld's
  conjecture.
\newblock {\em arXiv preprint arXiv:1702.02325}, 2017.

\bibitem{super}
{A}. Swaminathan.
\newblock Most integral odd-degree binary forms fail to properly represent a
  square.
\newblock {\em arXiv preprint arXiv:1910.12409}, 2020.

\bibitem{MR3054927}
{J}.~{A}. Thorne.
\newblock {\em The {A}rithmetic of {S}imple {S}ingularities}.
\newblock ProQuest LLC, Ann Arbor, MI, 2012.
\newblock Thesis (Ph.D.)--Harvard University.

\bibitem{MR2763952}
{M}.~{M}. Wood.
\newblock Rings and ideals parameterized by binary {$n$}-ic forms.
\newblock {\em J. Lond. Math. Soc. (2)}, 83(1):208--231, 2011.

\bibitem{MR3187931}
{M}.~{M}. Wood.
\newblock Parametrization of ideal classes in rings associated to binary forms.
\newblock {\em J. Reine Angew. Math.}, 689:169--199, 2014.

\end{thebibliography}

\end{document}